\documentclass[12pt]{amsart}
\usepackage{amscd,amsmath,amsthm,amssymb}
\usepackage[left]{lineno}
\usepackage{color}
\usepackage{stmaryrd}
\usepackage[utf8]{inputenc}
\usepackage{cleveref}
\usepackage{epstopdf}
\usepackage{graphicx}
\usepackage{xcolor}
\usepackage[misc,geometry]{ifsym}

\usepackage{tikz}

\definecolor{verylight}{gray}{0.97}
\definecolor{light}{gray}{0.9}
\definecolor{medium}{gray}{0.85}
\definecolor{dark}{gray}{0.6}

 %
 %
 %

 \def\KK{{\NZQ K}}
 
 %
 %

 %

 \def\G{{\mathcal G}}

 %

 \def\0b{{\mathbf 0}}

\def\reg{{\mathbf reg}}

\def\KK{{\mathbb K}}
\def\height{\operatorname{ht}}
\def\depth{\operatorname{depth}}
 \def\opn#1#2{\def#1{\operatorname{#2}}} 
 %
 \opn\chara{char} \opn\length{\ell} \opn\pd{pd} \opn\rk{rk}
 \opn\projdim{proj\,dim} \opn\injdim{inj\,dim} \opn\rank{rank}
 \opn\depth{depth} \opn\grade{grade} \opn\height{height}
 \opn\embdim{emb\,dim} \opn\codim{codim}
 
 \opn\Tr{Tr} \opn\bigrank{big\,rank}
 \opn\superheight{superheight}\opn\lcm{lcm}
 \opn\trdeg{tr\,deg}
 \opn\reg{reg} \opn\lreg{lreg} \opn\ini{in} \opn\lpd{lpd}
 \opn\size{size} \opn\sdepth{sdepth}
 \opn\link{link}\opn\fdepth{fdepth}\opn\lex{lex}
 \opn\tr{tr}
 \opn\type{type}
 \opn\gap{gap}
 \opn\arithdeg{arith-deg}
 \opn\HS{HS}
 \opn\GL{GL}
 %
 \opn\div{div} \opn\Div{Div} \opn\cl{cl} \opn\Cl{Cl}
 %
 %
 \opn\Spec{Spec} \opn\Supp{Supp} \opn\supp{supp} \opn\Sing{Sing}
 \opn\Ass{Ass} \opn\Min{Min}\opn\Mon{Mon}
 %
 %
 \opn\Ann{Ann} \opn\Rad{Rad} \opn\Soc{Soc}\opn\Deg{Deg}
 %
 %
 \opn\Im{Im} \opn\Ker{Ker} \opn\Coker{Coker} \opn\Am{Am}
 \opn\Hom{Hom} \opn\Tor{Tor} \opn\Ext{Ext} \opn\End{End}
 \opn\Aut{Aut} \opn\id{id}
 
 \opn\nat{nat}
 \opn\pff{pf}
 \opn\Pf{Pf} \opn\GL{GL} \opn\SL{SL} \opn\mod{mod} \opn\ord{ord}
 \opn\Gin{Gin} \opn\Hilb{Hilb}\opn\sort{sort}
 \opn\PF{PF}\opn\Ap{Ap}
 \opn\mult{mult}
 \opn\bight{bight}
 \opn\match{match}
\opn\St{St}

 %
 \opn\aff{aff}
 \opn\relint{relint} \opn\st{st}
 \opn\lk{lk} \opn\cn{cn} \opn\core{core} \opn\vol{vol}  \opn\inp{inp} \opn\nilpot{nilpot}
 \opn\link{link} \opn\star{star}\opn\lex{lex}\opn\set{set}
 \opn\width{wd}
 \opn\Fr{F}
 \opn\QF{QF}
 \opn\G{G}
 \opn\type{type}\opn\res{res}
 \opn\conv{conv}
 \opn\Ind{Ind}
 \opn\gr{gr}
 
 %
 %
 
 \def\pot#1#2{#1[\kern-0.28ex[#2]\kern-0.28ex]}

 %
 %
 \opn\dirlim{\underrightarrow{\lim}}
 \opn\inivlim{\underleftarrow{\lim}}
 %
 %
 %

 %
 %
 \let\to=\rightarrow
 
 \def\Implies{\ifmmode\Longrightarrow \else
         \unskip${}\Longrightarrow{}$\ignorespaces\fi}
 \def\implies{\ifmmode\Rightarrow \else
         \unskip${}\Rightarrow{}$\ignorespaces\fi}
 \def\iff{\ifmmode\Longleftrightarrow \else
         \unskip${}\Longleftrightarrow{}$\ignorespaces\fi}

 \let\:=\colon
 \newtheorem{Theorem}{Theorem}[section]
 \newtheorem{Lemma}[Theorem]{Lemma}
 
 \newtheorem{Proposition}[Theorem]{Proposition}
 \newtheorem{Remark}[Theorem]{Remark}
 
 \newtheorem{Example}[Theorem]{Example}
 
 \newtheorem{Definition}[Theorem]{Definition}

 \newtheorem{Setting}[Theorem]{Setting}

 %
 \let\epsilon\varepsilon
 \let\kappa=\varkappa
 %
 %
 \textwidth=15cm \textheight=22cm \topmargin=0.5cm
 \oddsidemargin=0.5cm \evensidemargin=0.5cm \pagestyle{plain}
 %
 %
 \def\qed{\ifhmode\textqed\fi
       \ifmmode\ifinner\quad\qedsymbol\else\dispqed\fi\fi}
 \def\textqed{\unskip\nobreak\penalty50
        \hskip2em\hbox{}\nobreak\hfil\qedsymbol
        \parfillskip=0pt \finalhyphendemerits=0}
 \def\dispqed{\rlap{\qquad\qedsymbol}}
 
 %
 \opn\dis{dis}
 \def\pnt{{\raise0.5mm\hbox{\large\bf.}}}
 
 \opn\Lex{Lex}

 


\begin{document}

\title{The $\circ$ operation and $*$   operation of  fan graphs}

\author{Guangjun Zhu$^{\ast}$, Yijun Cui,  Yulong Yang and Yi yang}



\address{ School of Mathematical Sciences, Soochow University, Suzhou, Jiangsu, 215006, P. R. China}

\email{zhuguangjun@suda.edu.cn(Corresponding author:Guangjun Zhu),
\linebreak[4]	237546805@qq.com(Yijun Cui), 1975992862@qq.com(Yulong Yang), 
3201088194@qq.com(Yi Yang).}

	\thanks{$^{\ast}$ Corresponding author}

\maketitle
\begin{abstract}
Let $G$  be a  finite simple graph  on the vertex set $V$ and let $I_G$ denote its edge ideal in the  polynomial ring $S=\KK[x_V]$. In this paper,  we compute the depth and the Castelnuovo--Mumford regularity of
$S/I_G$ when $G=F_{k}^{W}(K_n)$ is a $k$-fan graph, or  $G=G_1\circ G_2$ or $G=G_1* G_2$ is the graph obtained from  fan graphs $G_1$, $G_2$ by $\circ$ operation or $*$ operation, respectively.
\\
\\
{\bf Keywords}:  Regularity,  depth, fan graph, $\circ$ operation, $*$ operation\\
\\
{\bf Mathematics Subject Classification}.
Primary 13C15, 13A15, 13D02; Secondary 05E40
\end{abstract}

\section*{Introduction}

 For any two monomial ideals $I$ and $J$, it is known that $\reg(I+J)\le \reg(I)+\reg(J)-1$, see \cite{KM} and \cite{He}. Suppose we are
given a finite simple graph $G=(V(G),E(G))$ with the vertex set $V(G)$ and the edge set $E(G)$ and its  subgraphs $G_1=(V(G_1),E(G_1))$ and $G_2=(V(G_2),E(G_2))$ such that  $E(G)=E(G_1)\cup E(G_2)$, then for the edge ideal  the above inequalities imply
$\reg(I_G)\le \reg(I_{G_1})+\reg(I_{G_2})-1$.
 Herzog et al. in \cite{HMR} introduced   the concept of splitting graphs of a graph.  For two finite simple graphs  $G=(V(G),E(G))$ and $G'=(V(G'),E(G'))$,
 if there exists a surjective map $\alpha: V(G')\to V (G)$ such
that $\alpha(e):=\{\alpha(u), \alpha(v)\}$ is an edge of $G$ for all edges $e=\{u,v\}$ of $G'$, and such
that the map $E(G')\to E(G)$, $e\mapsto \alpha(e)$ is bijective. We call  $G'$ a splitting graph of $G$ and  $\alpha$ a splitting map of $G$. They proved in \cite[Theorem 1.3 and Proposition 1.6]{HMR} that $\reg(I_G)\le \reg(I_{G'})$ if $\reg(I_G)=\vartheta(G)+1$  where $\vartheta(G)$ is the induced matching number of $G$, or the  splitting map $\alpha: V(G')\to V (G)$ is special.
They also proved that  for path graphs and cycle graphs of even length, $\depth(S_{G'}/I_{G'})\ge \depth(S_G/I_G)$, where $S_{G}$ and $S_{G'}$ are the
polynomial rings over a   field $\KK$ in the variables corresponding to $V(G)$ and $V(G')$, respectively.

In this paper we are interested in whether formulas for the regularity and depth of $S_G/I_G$ can  be accurately determined  in terms of this information of $S_{G_1}/I_{G_1}$ and $S_{G_2}/I_{G_2}$ for a splitting graph $G':=G_1\sqcup G_2$ of graph $G$ and  subgraphs of $G'$. So far, there are no results, and it turns out to be a very  hard problem.

Bolognini et al. introduced in \cite{BMS} a class of chordal graphs  $F_k^{W}(K_n)$, called  $k$-fan graphs, and  a family of simple graphs  obtained from some graphs
 by the $*$ operation or the $\circ$ operation (see Definition \ref{circ_*_operations}). In Section $2$, we  give some formulas for the dimension, depth and regularity of  the quotient ring  of the edge ideal of a $k$-fan graph $F_k^{W}(K_n)$.  If  $G=(G_1,f_1)*(G_2,f_2)$ where $f_i$ is a  leaf of $G_i$ for $i=1,2$. We set $G'=G_1\sqcup G_2$ and  define a map $\alpha: V(G')\to V (G)$ by  $\alpha(f_1)=\alpha(f_2)=f$ and $\alpha(u)=u$ with $u\in V(G')\backslash \{f_1,f_2\}$. It is clear that $G'$ is a splitting graph of $G$. If   $G=(G_1,f_1)\circ (G_2,f_2)$ where $f_i$ is a  leaf of $G_i$ and $v_i$ is its  neighbor vertex in  $G_i$  for $i=1,2$. We set $G'=(G_1\backslash f_1) \sqcup (G_2\backslash f_1)$ and  define a map $\alpha': V(G')\to V (G)$ by  $\alpha'(v_1)=\alpha'(v_2)=v$ and $\alpha'(u)=u$ with $u\in V(G')\backslash \{v_1,v_2\}$. It is also clear that $G'$ is a splitting graph of $G$ and $G'$ is also an induced subgraph of $G_1\sqcup G_2$.
 In Section $3$,  we give some formulas for the depth and regularity of  some graphs  obtained from the fan graphs
 by  the two special gluing operations mentioned earlier.

\section{Preliminary}
In this section, we gather together the needed definitions and basic facts, which will
be used throughout this paper. However, for more details, we refer the reader to \cite{BM,HH,V}.

\subsection{Notions of simple graphs}
For any finite simple graph $G=(V(G),E(G))$, with a set of vertices $V(G)$ and a set of edges $E(G)$, we define some graph-theoretic notions as follows.

For a vertex $v\in V(G)$, its  \emph{neighborhood}  is defined as  $N_G(v)=\{u\,|\, \{u,v\}\in E(G)\}$  and its degree, denoted by $\deg_G(v)$, is $|N_G(v)|$. Set $N_G[v]=N_G(v)\cup\{v\}$.
For $A\subset V(G)$, $G|_A$ denotes the \emph{induced subgraph} of $G$ on the  set $A$, i.e., for $i,j \in A$, $\{i,j\} \in E(G|_A)$ if and only if $\{i,j\}\in E(G)$. For $W\subseteq V(G)$, we denote by $G\backslash W$ the induced subgraph of  $G$  on $V(G) \setminus W$. For a vertex $v\in V(G)$,  we denote by $G\backslash v$  the induced subgraph of $G$ on the  set $V(G)\backslash \{v\}$ for simplicity.

A subset $M\subset E(G)$ is a \emph{matching} of $G$ if $e\cap e'=\emptyset$  for any pair of edges $e, e'\in M$.  A matching $M$ of $G$ is called an \emph{induced matching} if the induced subgraph at
 the vertices of $M$ contains no edges other than those already contained in M.  The \emph{induced matching number} of $G$, denoted by $\vartheta(G)$, is the maximum size of an induced matching in $G$.

A \emph{complete graph} (or clique) on $n$ vertices is a graph where for any two vertices there is an edge connecting  them. It is denoted by $K_n$. A \emph{cycle} of length $n$ in a graph  is a closed walk along its edges, $\{x_1, x_2\}, \{x_2, x_3\},\ldots, \{x_{n-1},x_n\}, \{x_n, x_1\}$,
such that $x_i\ne x_j$ for $i\ne j$. We denote the cycle on $n$ vertices by $C_n$. A \emph{chord} in the cycle $C_n$ is an edge $\{x_i, x_j\}$ where $x_j\ne x_{i-1}, x_{i+1}$. A graph is said to be \emph{chordal} if it
 has a chord for every cycle of length greater than or equal to  $4$. It is clear that a complete graph is a chordal graph.
for every cycle of length greater than or equal to $4$ there is a chord. It is clear that a complete graph is a chordal graph.

For a simple graph $G=(V(G),E(G))$,  the subset $C\subseteq V(G)$ is called a \emph{vertex cover}  of $G$ if $\{x_i,x_j\}\in E(G)$  then $x_i\in C $ or $x_j\in C$. A vertex cover $C$ of $G$ is called \emph{minimal} if every proper subset of $C$ is not a vertex cover of $G$.

\subsection{Algebraic preliminaries}

Let  $S=\KK[x_1,\ldots,x_n]$ be a polynomial ring   over  a field $\KK$. Let  $M$ be a graded $S$-module with minimal free resolution
$$0\rightarrow \bigoplus\limits_{j}S(-j)^{\beta_{p,j}}\rightarrow \bigoplus\limits_{j}S(-j)^{\beta_{p-1,j}}\rightarrow \cdots\rightarrow \bigoplus\limits_{j}S(-j)^{\beta_{0,j}}\rightarrow M\rightarrow 0,$$
where the maps are exact, $p\le n$, and $S(-j)$ is the free module obtained by  shifting the degrees in $S$ by $j$.
The numbers $\beta_{i,j} $'s are positive integers and are called   the $(i, j)$th graded Betti number of $M$.
Two very important homological invariants   related to these numbers are  the Castelnuovo-Mumford regularity (or simply regularity) and  the depth, denoted by $\reg(M)$ and $\depth(M)$ respectively:
	\begin{align*}
\reg(M)&=\mbox{max}\,\{j-i\ |\ \Tor_i(M,\KK)_j\neq 0\}\\
\depth(M)&=\mbox{min}\,\{i\ |\ \Ext^{i}(\KK,M)\neq 0\}.
	\end{align*}

The following lemmas are often used  to compute  the depth and regularity of a module or  ideal.
In particular, since the facts in Lemma \ref{lem:direct_sum} are well-known, they are used implicitly in this paper.

\begin{Lemma}
	\label{lem:direct_sum}
	Let $M,N$ be two  finitely generated graded $S$-modules. Then,
	\begin{itemize}
		\item[(1)] 
		$\depth(M\oplus N)=\min\{\depth(M),\depth(N)\}$, and
		\item[(2)] 
		$\reg(M\oplus N)=\max\{\reg(M),\reg(N)\}$.
	\end{itemize}
\end{Lemma}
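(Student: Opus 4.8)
The plan is to derive both statements directly from the behaviour of minimal free resolutions under direct sums. The key observation is that if $F_\bullet \to M$ and $G_\bullet \to N$ are the minimal graded free resolutions of $M$ and $N$ respectively, then the direct sum complex $F_\bullet \oplus G_\bullet \to M \oplus N$ is again a graded free resolution, and it is again minimal because the entries of its differentials are block-diagonal with blocks coming from $F_\bullet$ and $G_\bullet$, all of which lie in the graded maximal ideal $\mm = (x_1,\dots,x_n)$. By uniqueness of the minimal free resolution, this shows $\beta_{i,j}(M\oplus N) = \beta_{i,j}(M) + \beta_{i,j}(N)$ for all $i,j$. Equivalently, $\Tor_i^S(M\oplus N,\KK)_j \cong \Tor_i^S(M,\KK)_j \oplus \Tor_i^S(N,\KK)_j$, which is the statement I will actually use.

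For part (2), I would simply note that $\Tor_i^S(M\oplus N,\KK)_j \neq 0$ if and only if $\Tor_i^S(M,\KK)_j \neq 0$ or $\Tor_i^S(N,\KK)_j \neq 0$. Taking the maximum of $j-i$ over all such pairs $(i,j)$ therefore yields $\reg(M\oplus N) = \max\{\reg(M),\reg(N)\}$ straight from the definition of regularity recalled above. For part (1), I would use that $\depth$ can be read off from either the Ext characterization given in the text or, dually, from the length of the minimal free resolution via the Auslander--Buchsbaum formula: $\depth(M\oplus N) = n - \pd(M\oplus N)$, and $\pd(M\oplus N) = \max\{\pd(M),\pd(N)\}$ since $\beta_{i,\bullet}(M\oplus N)$ is nonzero exactly when $\beta_{i,\bullet}(M)$ or $\beta_{i,\bullet}(N)$ is nonzero. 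Converting back via Auslander--Buchsbaum turns the maximum of projective dimensions into the minimum of depths, giving $\depth(M\oplus N) = \min\{\depth(M),\depth(N)\}$. Alternatively, one can argue directly from $\Ext^i_S(\KK, M\oplus N) \cong \Ext^i_S(\KK,M) \oplus \Ext^i_S(\KK,N)$ (additivity of $\Ext$ in the second variable) together with the Ext-definition of depth.

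There is essentially no obstacle here; the only point that requires a word of justification is the minimality of the direct-sum resolution, and this is immediate since direct sums of minimal complexes are minimal. As the statement itself notes, this lemma is well known, so I would keep the proof to a couple of lines, citing a standard reference such as \cite{HH} for the additivity of Betti numbers under direct sums and for the Auslander--Buchsbaum formula, and leave the routine verifications to the reader.
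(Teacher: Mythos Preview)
Your proposal is correct and follows the standard route via additivity of Betti numbers (equivalently, of $\Tor$ and $\Ext$) under direct sums; the Auslander--Buchsbaum argument for part (1) is valid here because $S$ is a polynomial ring, so every finitely generated module has finite projective dimension. The paper itself gives no proof of this lemma at all: it explicitly remarks that the facts in Lemma~\ref{lem:direct_sum} are well known and are used implicitly, so your write-up actually supplies more detail than the paper does.
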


\begin{Lemma} {\em (\cite[Lemmas 2.1 and 3.1]{HT})}
	\label{exact}
	Let $0\longrightarrow M\longrightarrow N\longrightarrow P\longrightarrow 0$  be an exact sequence of finitely generated graded $S$-modules. Then we have
	\begin{enumerate}
		\item 	\label{exact-2}$\depth\,(M)\geq \min\{\depth\,(N), \depth\,(P)+1\}$, the equality holds if $\depth\,(N)\\ \neq \depth\,(P)$.
		\item 	\label{exact-4} $\reg\,(M)\leq \max\{\reg\,(N), \reg\,(P)+1\}$, the equality holds if $\reg\,(N) \neq \reg\,(P)$.
	\end{enumerate}
\end{Lemma}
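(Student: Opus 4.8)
The plan is to derive both inequalities, and their sharpenings, from the two standard long exact sequences attached to $0\to M\to N\to P\to 0$: one that computes $\depth$ and one that computes $\reg$. For $\depth$ I would use the long exact sequence obtained by applying $\Ext^\bullet(\KK,-)$, together with the characterization $\depth X=\min\{i : \Ext^i(\KK,X)\neq 0\}$ recalled above; for $\reg$ I would use the long exact sequence obtained by applying $\Tor_\bullet(-,\KK)$ in each internal degree $j$, together with $\reg X=\max\{j-i : \Tor_i(X,\KK)_j\neq 0\}$. Once the right sequence is written down, the proof is purely a matter of tracking which terms vanish.

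For part $(1)$, the long exact sequence reads
$$\cdots\to\Ext^{i-1}(\KK,P)\to\Ext^i(\KK,M)\to\Ext^i(\KK,N)\to\Ext^i(\KK,P)\to\Ext^{i+1}(\KK,M)\to\cdots.$$
If $i<\min\{\depth N,\depth P+1\}$, then $\Ext^i(\KK,N)=0$ and $\Ext^{i-1}(\KK,P)=0$, hence $\Ext^i(\KK,M)=0$; this gives $\depth M\ge\min\{\depth N,\depth P+1\}$. For the equality clause when $\depth N\neq\depth P$, split into two cases. If $\depth N<\depth P$, take $i=\depth N$: both $\Ext^{i-1}(\KK,P)$ and $\Ext^i(\KK,P)$ vanish, so $\Ext^i(\KK,M)\cong\Ext^i(\KK,N)\neq 0$ and $\depth M=\depth N$. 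If $\depth N>\depth P$, take $i=\depth P$: then $\Ext^i(\KK,N)=0$, so the map $\Ext^i(\KK,P)\to\Ext^{i+1}(\KK,M)$ is injective, forcing $\Ext^{i+1}(\KK,M)\neq 0$; combined with the inequality already shown this yields $\depth M=\depth P+1$.

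For part $(2)$, in each internal degree $j$ the long exact sequence reads
$$\cdots\to\Tor_{i+1}(P,\KK)_j\to\Tor_i(M,\KK)_j\to\Tor_i(N,\KK)_j\to\Tor_i(P,\KK)_j\to\Tor_{i-1}(M,\KK)_j\to\cdots.$$
If $\Tor_i(M,\KK)_j\neq 0$, then either $\Tor_{i+1}(P,\KK)_j\neq 0$, giving $j-i\le\reg P+1$, or $\Tor_i(N,\KK)_j\neq 0$, giving $j-i\le\reg N$; hence $\reg M\le\max\{\reg N,\reg P+1\}$. For the equality clause when $\reg N\neq\reg P$: if $\reg N>\reg P$, pick $(i,j)$ with $j-i=\reg N$ and $\Tor_i(N,\KK)_j\neq 0$; then $\Tor_i(P,\KK)_j=0$, so the map $\Tor_i(M,\KK)_j\to\Tor_i(N,\KK)_j$ is onto, in particular $\Tor_i(M,\KK)_j\neq 0$ and $\reg M\ge j-i=\reg N$. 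If $\reg N<\reg P$, pick $(i,j)$ with $j-i=\reg P$ and $\Tor_i(P,\KK)_j\neq 0$; then $\Tor_i(N,\KK)_j=0$, so the map $\Tor_i(P,\KK)_j\to\Tor_{i-1}(M,\KK)_j$ is injective, whence $\Tor_{i-1}(M,\KK)_j\neq 0$ and $\reg M\ge j-(i-1)=\reg P+1$.

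The only delicate point is the equality statements: one must choose the homological degree $i$ so that the two neighbouring $\Ext$ (resp. $\Tor$) terms in the relevant window of the long exact sequence vanish, so that the induced or connecting map is forced to be an isomorphism, an injection, or a surjection exactly in the degree that realizes the claimed bound. Apart from making this choice, the argument is nothing more than exactness of the two long exact sequences combined with the definitions of $\depth$ and $\reg$; there is no substantial obstacle, which is why this lemma can simply be quoted from \cite{HT}.
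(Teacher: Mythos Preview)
Your argument is correct and is exactly the standard proof of these ``depth lemma''/``regularity lemma'' inequalities via the long exact sequences in $\Ext^\bullet(\KK,-)$ and $\Tor_\bullet(-,\KK)$. Note that the paper does not prove this lemma at all; it simply quotes it from \cite{HT}, so there is no in-paper proof to compare against. Your write-up could in fact serve as a self-contained justification in place of the citation.

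One very minor point: in part~(2), case $\reg N<\reg P$, you invoke the connecting map $\Tor_i(P,\KK)_j\to\Tor_{i-1}(M,\KK)_j$, which only makes sense for $i\ge 1$. This is harmless, because $i=0$ cannot occur here: surjectivity of $N\to P$ forces $\Tor_0(N,\KK)_j\to\Tor_0(P,\KK)_j$ to be onto, so $\Tor_0(P,\KK)_j\ne 0$ would give $j\le\reg N<\reg P=j$, a contradiction. You might add a sentence to that effect for completeness.
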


\begin{Lemma}{\em (\cite[Lemma 2.2, Lemma 3.2]{HT})}
	\label{sum}
	Let $S_{1}=\KK[x_{1},\dots,x_{m}]$ and $S_{2}=\KK[x_{m+1},\dots,x_{n}]$ be two polynomial rings  over $\KK$,  $I\subset S_{1}$ and
	$J\subset S_{2}$ be two non-zero homogeneous  ideals. Let $S=S_1\otimes_\KK S_2$.  Then we have
	\begin{enumerate}
		\item \label{sum-1} $\reg\,(S/(I+J))=\reg\,(S_1/I)+\reg\,(S_2/J)$;
		\item \label{sum-2} $\depth\,(S/(I+J))=\depth\,(S_1/I)+\depth\,(S_2/J)$;
	\end{enumerate}
\end{Lemma}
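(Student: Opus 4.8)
The plan is to identify $S/(I+J)$ with a tensor product over $\KK$ and then tensor together the minimal graded free resolutions of the two factors; both formulas will fall out of the resulting identity for graded Betti numbers. Since the two variable sets are disjoint, $S=S_1\otimes_\KK S_2$ and $S/(I+J)\cong (S_1/I)\otimes_\KK(S_2/J)$ as graded $\KK$-algebras, and $S$ is free (hence flat) both as an $S_1$-module and as an $S_2$-module. Let $(F_\bullet,\partial^F)\To S_1/I$ and $(G_\bullet,\partial^G)\To S_2/J$ be minimal graded free resolutions over $S_1$ and $S_2$. First I would form the complex $T_\bullet=F_\bullet\otimes_\KK G_\bullet$, that is, the total complex of the evident double complex, so that $T_n=\bigoplus_{a+b=n}F_a\otimes_\KK G_b$: each summand is a graded free $S$-module, being the $\KK$-tensor product of a free $S_1$-module and a free $S_2$-module, and the differential $\partial^F\otimes 1\pm 1\otimes\partial^G$ is $S$-linear.

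Next I would verify that $T_\bullet$ is in fact the \emph{minimal} graded free resolution of $S/(I+J)$ over $S$. Exactness is the Künneth theorem over the field $\KK$: $H_n(T_\bullet)\cong\bigoplus_{a+b=n}H_a(F_\bullet)\otimes_\KK H_b(G_\bullet)$, which vanishes for $n>0$ and equals $(S_1/I)\otimes_\KK(S_2/J)=S/(I+J)$ for $n=0$. Minimality holds because the matrix entries of $\partial^F$ lie in the graded maximal ideal $\mathfrak{m}_{S_1}$ and those of $\partial^G$ in $\mathfrak{m}_{S_2}$, so every entry of the differential of $T_\bullet$ lies in $\mathfrak{m}_{S_1}S+\mathfrak{m}_{S_2}S=\mathfrak{m}_S$. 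Comparing ranks in each internal degree $d$ then yields
\[
\beta_{n,d}^{S}\bigl(S/(I+J)\bigr)=\sum_{a+b=n}\ \sum_{c+e=d}\beta_{a,c}^{S_1}\bigl(S_1/I\bigr)\,\beta_{b,e}^{S_2}\bigl(S_2/J\bigr).
\]

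Finally I would read off the two invariants. A pair $(n,d)$ with $\beta_{n,d}^S\ne 0$ corresponds to pairs $(a,c)$ and $(b,e)$ satisfying $\beta_{a,c}^{S_1}\ne 0$, $\beta_{b,e}^{S_2}\ne 0$, $n=a+b$ and $d=c+e$, whence $d-n=(c-a)+(e-b)$; maximizing over all such pairs gives $\reg(S/(I+J))=\reg(S_1/I)+\reg(S_2/J)$, the two maxima being attained because $S_1/I$ and $S_2/J$ are nonzero. Likewise the length of $T_\bullet$ gives $\projdim_S(S/(I+J))=\projdim_{S_1}(S_1/I)+\projdim_{S_2}(S_2/J)$, and since $n=m+(n-m)$ the Auslander--Buchsbaum formula, applied in $S_1$, $S_2$ and $S$, turns this into $\depth(S/(I+J))=\depth(S_1/I)+\depth(S_2/J)$.

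The one genuinely delicate point is the minimality of $T_\bullet$, i.e.\ the assertion that tensoring minimal resolutions over disjoint variable sets again produces a minimal resolution; granting this, both formulas are routine bookkeeping with the Betti-number identity above. If one would rather not construct $T_\bullet$ by hand, the same identity can be deduced from the graded Künneth isomorphism
\[
\Tor_n^S\bigl(S/(I+J),\KK\bigr)\cong\bigoplus_{a+b=n}\Tor_a^{S_1}\bigl(S_1/I,\KK\bigr)\otimes_\KK\Tor_b^{S_2}\bigl(S_2/J,\KK\bigr),
\]
valid since $\KK$ is a field, and the two invariants are then extracted exactly as above.
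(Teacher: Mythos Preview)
Your argument is correct and is essentially the standard proof of this fact: form the tensor-product resolution $F_\bullet\otimes_\KK G_\bullet$, check exactness via K\"unneth and minimality via the observation that all matrix entries land in $\mathfrak m_S$, read off the graded Betti identity, and deduce both formulas (the depth statement via Auslander--Buchsbaum). There is nothing to compare against in the paper itself: the lemma is quoted without proof from \cite{HT}, so the paper's ``proof'' is simply a citation. Your write-up would serve perfectly well as a self-contained justification.
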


\begin{Lemma}{\em (\cite[Lemma 1.3]{HTT})}
	\label{quotient}
	Let  $I\subset S$ be a proper non-zero homogeneous ideal. Then
	\[
	\reg\,(I)=\reg\,(S/I)+1.
	\]
\end{Lemma}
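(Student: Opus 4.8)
The plan is to compare the minimal graded free resolutions of $I$ and of $S/I$ directly. Since $I$ is a proper nonzero homogeneous ideal, $S/I$ is a cyclic $S$-module generated in degree $0$, so its minimal graded free resolution has the shape
\[
\cdots \longrightarrow F_2 \longrightarrow F_1 \longrightarrow F_0 \longrightarrow S/I \longrightarrow 0,
\qquad F_0 = S .
\]
First I would observe that the kernel of the surjection $F_0 = S \to S/I$ is exactly $I$. Hence deleting the augmentation term yields an exact complex $\cdots \longrightarrow F_2 \longrightarrow F_1 \longrightarrow I \longrightarrow 0$, which is again a minimal free resolution: the differentials $F_{i+1}\to F_i$ for $i\ge 1$ are unchanged, and $F_1\to I\hookrightarrow S$ has entries in $\mm$ by minimality of the original resolution. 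Consequently $\beta_{i,j}(I) = \beta_{i+1,j}(S/I)$ for all $i\ge 0$ and all $j$.

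Next I would translate this into regularity. By definition,
\[
\reg(I) = \max\{\, j - i : \beta_{i,j}(I) \neq 0 \,\} = \max\{\, j - (i+1) : \beta_{i+1,j}(S/I) \neq 0 \,\} + 1 .
\]
The only graded Betti number of $S/I$ not appearing on the right-hand side is $\beta_{0,0}(S/I) = 1$, which contributes $0$ to $\reg(S/I)$. Since $I\neq 0$, there is a minimal first syzygy, so $\beta_{1,j}(S/I)\neq 0$ for some $j\ge 1$, giving a contribution $j-1\ge 0$; hence the maximum defining $\reg(S/I)$ is attained in homological degree $\ge 1$, and therefore
\[
\reg(S/I) = \max\{\, j - i : \beta_{i,j}(S/I)\neq 0,\ i\ge 1 \,\} .
\]
Combining the two displays gives $\reg(I) = \reg(S/I) + 1$.

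There is no serious obstacle here; the only point requiring a little care is the bookkeeping in the last step, namely checking that the extra Betti number $\beta_{0,0}(S/I)$ does not affect $\reg(S/I)$, which is precisely where the hypotheses $I\neq 0$ and $I$ proper are used. Alternatively, one could apply Lemma \ref{exact}(2) to the short exact sequence $0\to I\to S\to S/I\to 0$ to get $\reg(I)\le \max\{\reg(S),\reg(S/I)+1\} = \reg(S/I)+1$, and then obtain the reverse inequality from the resolution comparison above (or from a short case analysis according to whether $\reg(S/I)=0$); but the direct resolution argument is cleaner and self-contained.
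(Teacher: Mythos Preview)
Your argument is correct and entirely standard: the shift $\beta_{i,j}(I)=\beta_{i+1,j}(S/I)$ coming from truncating the minimal free resolution of $S/I$ gives the identity immediately, and you handle the one subtlety (that $\beta_{0,0}(S/I)=1$ does not disturb the maximum) properly using $I\neq 0$.

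There is nothing to compare against in the paper itself: Lemma~\ref{quotient} is stated with a citation to \cite[Lemma 1.3]{HTT} and no proof is given. Your write-up therefore supplies what the paper omits, and does so by the expected route.
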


For a simple graph  $G=(V(G),E(G))$,  its edge ideal is defined as follows:
\[
I_G=(x_ix_j \mid  \{x_i, x_j\}\in E(G))\subset S],
\]
where $S=\KK[x_{V(G)}]$ is the polynomial ring over a field $\KK$ and   $x_{V(G)}=\{x_i|i\in V(G)\}$.

For a monomial ideal $I$, let $\mathcal{G}(I)$ be its  unique  minimal set of monomial generators. For a subset $A\subset [n]$, we set $x_{A}:=\{x_i|i\in A\}$ for simplicity. The following lemma is very important for the whole paper.
\begin{Lemma}
	\label{decomposition}
	Let $G=(V,E)$ be  a simple connected  graph. Let $J=(x_{N_G(v)})+I_{G \backslash N_G[v]}$ and $K=(x_v)+I_{G \backslash v}$, where $v\in V$. Then,
	\begin{enumerate}
		\item  \label{decomposition-1} $J+K=(x_{N_G[v]})+I_{G \backslash N_G[v]}$;
		\item \label{decomposition-2} $I_G=J\cap K$;
		\item \label{decomposition-3} $\depth(S/J)=\depth(S/(J+K))+1$;
       \item \label{decomposition-4} $\reg(S/J)=\reg(S/(J+K))$.
\end{enumerate}
\end{Lemma}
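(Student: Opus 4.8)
The plan is to prove parts (1) and (2) as purely combinatorial identities between monomial ideals, obtained by comparing monomial generators, and then to derive (3) and (4) from (1) by exploiting the fact that the variable $x_v$ does not occur in any generator of $J$.

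For part (1): since $N_G[v]=N_G(v)\cup\{v\}$ with $v\notin N_G(v)$, regrouping the four summands of $J+K$ gives $J+K=(x_{N_G[v]})+I_{G\setminus N_G[v]}+I_{G\setminus v}$, and because $G\setminus N_G[v]$ is an induced subgraph of $G\setminus v$ we have $I_{G\setminus N_G[v]}\subseteq I_{G\setminus v}$, hence $J+K=(x_{N_G[v]})+I_{G\setminus v}$. To conclude I would establish the reverse containment $I_{G\setminus v}\subseteq(x_{N_G[v]})+I_{G\setminus N_G[v]}$: if $\{i,j\}$ is an edge of $G$ with $i,j\neq v$, then either it meets $N_G(v)$, so $x_ix_j\in(x_{N_G[v]})$, or it lies entirely outside $N_G[v]$, so $\{i,j\}\in E(G\setminus N_G[v])$ and $x_ix_j\in I_{G\setminus N_G[v]}$. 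This gives $J+K=(x_{N_G[v]})+I_{G\setminus N_G[v]}$; in particular $J+K=J+(x_v)$, which is the form used in parts (3)--(4).

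For part (2): the inclusion $I_G\subseteq J\cap K$ is immediate, since $I_G=I_{G\setminus v}+(x_ux_v\mid u\in N_G(v))$ while $I_{G\setminus v}$ lies in both $J$ and $K$ and each $x_ux_v$ lies in $J$ (as $x_u\in J$) and in $K$ (as $x_v\in K$). For the reverse inclusion I take a monomial $m\in J\cap K$ and track divisibility: membership in $K=(x_v)+I_{G\setminus v}$ forces either that $m$ is a multiple of a generator of $I_{G\setminus v}$, whence $m\in I_{G\setminus v}\subseteq I_G$, or that $x_v\mid m$; in the latter case membership in $J=(x_{N_G(v)})+I_{G\setminus v}$ forces $m$ to be a multiple either of a generator of $I_{G\setminus v}$, and we are done, or of some $x_u$ with $u\in N_G(v)$, and then $x_ux_v\mid m$ with $\{u,v\}\in E(G)$, so again $m\in I_G$. (Equivalently one verifies $J=I_G:x_v$ and $K=I_G+(x_v)$, after which $I_G=J\cap K$ is the standard identity $I=(I:x)\cap(I+(x))$ that holds for a squarefree monomial ideal $I$ and a variable $x$.)

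For parts (3) and (4): by (1) we have $J+K=J+(x_v)$, and since no minimal generator of $J$ involves $x_v$, multiplication by $x_v$ is injective on $S/J$, giving the short exact sequence of graded $S$-modules
\[
0\longrightarrow (S/J)(-1)\xrightarrow{\ \cdot x_v\ } S/J\longrightarrow S/(J+K)\longrightarrow 0 .
\]
Applying $\Tor_i(-,\KK)$ and $\Ext^{i}(\KK,-)$ to this sequence and using that $x_v\in\mm$ annihilates $\Tor_i(-,\KK)$ and $\Ext^{i}(\KK,-)$, the maps induced by $\cdot x_v$ vanish, so the long exact sequences break into short exact sequences; reading off degrees then yields $\reg(S/(J+K))=\reg(S/J)$ and $\depth(S/(J+K))=\depth(S/J)-1$, which are exactly (4) and (3). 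Equivalently, writing $S=S'\otimes_\KK\KK[x_v]$ with $S'=\KK[x_{V\setminus\{v\}}]$ and noting that $J$ is extended from $S'$, one has $S/J\cong (S/(J+K))[x_v]$ and the conclusions follow from Lemma~\ref{sum}. I do not expect a genuine obstacle here; the one step requiring care is the monomial membership argument in (2), where one must keep track of which summand of $J$ and of $K$ provides the generator dividing $m$.
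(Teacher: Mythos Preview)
Your proof is correct and follows essentially the same route as the paper. For (1) you spell out the containment $I_{G\setminus v}\subseteq (x_{N_G(v)})+I_{G\setminus N_G[v]}$ that the paper records as $\mathcal{G}(I_{G\setminus v})\subseteq (x_{N_G(v)})+\mathcal{G}(I_{G\setminus N_G[v]})$; for (2) you argue monomial-by-monomial whereas the paper expands $J\cap K$ distributively into four monomial intersections and checks each lies in $I_G$, which amounts to the same case analysis; and for (3)--(4) the paper simply writes ``follow from (1)'', while you supply the intended justification via $J+K=J+(x_v)$ and the regularity-of-$x_v$ argument (equivalently $S/J\cong (S/(J+K))[x_v]$).
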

\begin{proof} (1) Since  $\mathcal{G}(I_{G \backslash v})\subseteq  (x_{N_G(v)})+\mathcal{G}(I_{G \backslash N_G[v]})$, we have
\begin{align*}
	J+K=(x_{N_G(v)})+I_{G \backslash N_G[v]}+(x_v)+I_{G \backslash v}
	=(x_{N_G[v]})+I_{G \backslash N_G[v]}.
\end{align*}

(2) It is clear that $I_G\subseteq J\cap K$. On the other hand, since $(x_{N_G(v)})\cap (x_v)\subseteq I_G$,  both $G\backslash v$	and $G\backslash N_G[v]$ are induced subgraphs of $G$. We have
	\begin{align*}	J\cap K&=[(x_{N_G(v)})+I_{G \backslash N_G[v]}]\cap [(x_v)+I_{G \backslash v}]\\
	&=(x_{N_G(v)})\cap (x_v)+(x_{N_G(v)})\cap I_{G \backslash v}+I_{G \backslash N_G[v]}\cap (x_v)
	+I_{G \backslash N_G[v]}\cap I_{G \backslash v} \\
	& \subseteq I_G.
\end{align*}

(3) and (4) follow from (1). 
\end{proof}

\begin{Lemma}{\em (\cite[Corollary 4.6]{CZW})}
	\label{complete}
 Let $G$ be a complete graph on  the set $[n]$, then
 \[
 \depth\,(S/I_G)=\reg\,(S/I_G)=1.
 \]
\end{Lemma}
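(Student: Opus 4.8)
The plan is to induct on the number $n$ of vertices, combining the recursion of Lemma~\ref{decomposition} with the exact-sequence estimates of Lemma~\ref{exact}. (Throughout we take $n\ge 2$; for $n=1$ one has $I_G=0$.) The base case $n=2$ is immediate: there $I_G=(x_1x_2)$ has minimal free resolution $0\to S(-2)\to S\to S/I_G\to 0$, so $\reg(S/I_G)=1$ and, by the Auslander--Buchsbaum formula, $\depth(S/I_G)=2-1=1$.

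For the inductive step let $n\ge 3$ and fix a vertex $v\in[n]$. Since $G=K_n$ is complete, $N_G[v]=[n]$, so $G\setminus N_G[v]$ is empty and $G\setminus v\cong K_{n-1}$ on $[n]\setminus\{v\}$. In the notation of Lemma~\ref{decomposition} this gives $J=(x_u:u\ne v)$, $K=(x_v)+I_{K_{n-1}}$ and $J+K=\mm$. Hence $S/J\cong\KK[x_v]$ has depth $1$ and regularity $0$, $S/(J+K)\cong\KK$ has depth $0$ and regularity $0$, and $S/K\cong\KK[x_{[n]\setminus\{v\}}]/I_{K_{n-1}}$ has depth $1$ and regularity $1$ by the induction hypothesis. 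By Lemma~\ref{decomposition}(2) we have $I_G=J\cap K$, so there is an exact sequence $0\to S/I_G\to S/J\oplus S/K\to S/(J+K)\to 0$. Now apply Lemma~\ref{exact}: since $\depth(S/J\oplus S/K)=1\ne 0=\depth(S/(J+K))$ the depth estimate is an equality, so $\depth(S/I_G)=\min\{1,0+1\}=1$; since $\reg(S/J\oplus S/K)=1\ne 0=\reg(S/(J+K))$ the regularity estimate is an equality, so $\reg(S/I_G)=\max\{1,0+1\}=1$. This closes the induction.

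I do not anticipate a real obstacle; the only point requiring care is that the equality clauses of Lemma~\ref{exact} do \emph{not} apply when $n=2$ (there $S/K\cong\KK[x_v]$ has regularity $0$ as well, so the hypothesis for the regularity bound fails), which is exactly why the base case must be treated by hand. As a sanity check one may instead argue structurally: the minimal primes of $I_G$ are the ideals $(x_u:u\ne i)$, $i\in[n]$, so $\dim(S/I_G)=1$; since $I_G$ is squarefree it is radical and hence has no embedded primes, so $\mm\notin\Ass(S/I_G)$ and $\depth(S/I_G)\ge 1=\dim(S/I_G)$, i.e. $S/I_G$ is Cohen--Macaulay of depth $1$, and $\reg(S/I_G)=1$ then follows from Hochster's formula since the Stanley--Reisner complex of $I_G$ consists of $n$ isolated points. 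I would present the first, self-contained, argument.
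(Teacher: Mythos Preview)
The paper does not prove this lemma; it simply quotes it from \cite[Corollary~4.6]{CZW} without argument. Your inductive proof via Lemma~\ref{decomposition} and the exact-sequence bounds of Lemma~\ref{exact} is correct and entirely self-contained within the paper's toolkit; the observation that the equality clause for regularity fails at $n=2$ (forcing a separate base case) is accurate and well spotted. The alternative structural argument you sketch---$\dim(S/I_G)=1$, no embedded primes since $I_G$ is squarefree, hence Cohen--Macaulay of depth~$1$, with regularity read off from the Stanley--Reisner complex---is also sound and arguably more direct, but either route is acceptable here.
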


\begin{Lemma}{\em (\cite[Theorem 3.3, Corollary 3.3]{Zhu1})}
	\label{path}
 Let $n\ge 2$ be an integer and $P_n$ be  a path  graph on the  set $[n]$, then
 \[
 \depth(S/I_G)= \lceil\frac{n}{3}\rceil  \text{\ and\ }  \reg(S/I_G)=\lfloor\frac{n+1}{3}\rfloor
 \]
where  $\lceil\frac{n}{3}\rceil$ is the smallest integer $\ge \frac{n}{3}$ and $\lfloor\frac{n+1}{3}\rfloor$ is the  largest integer $\le \frac{n+1}{3}$.
\end{Lemma}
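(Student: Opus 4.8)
The natural approach is induction on $n$. The base cases $n\le 4$ are handled directly: for $n=2$ the path $P_2$ is the complete graph $K_2$, so Lemma~\ref{complete} gives $\depth(S/I_{P_2})=\reg(S/I_{P_2})=1$, and $P_3,P_4$ are checked by inspection. It is also convenient to record the degenerate conventions $\depth(S/I_{P_0})=\reg(S/I_{P_0})=\reg(S/I_{P_1})=0$ and $\depth(S/I_{P_1})=1$, which agree with the stated formulas and let the inductive step apply uniformly down to these cases. For $n\ge 5$ I would run two independent arguments, one for each invariant, each based on an exact sequence attached to a carefully chosen vertex of the path $P_n=1{-}2{-}\cdots{-}n$; it is worth noting that deleting a \emph{leaf} is not enough for either invariant, since the resulting recursion shifts the quantity by $2$ while the formulas jump every $3$ steps, which is why I use the vertices $2$ and $3$ instead.

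For the depth, apply Lemma~\ref{decomposition} at $v=2$. Then $N_G(2)=\{1,3\}$, $G\setminus N_G[2]=P_{n-3}$ on $\{4,\dots,n\}$, and $G\setminus 2$ is $P_{n-2}$ on $\{3,\dots,n\}$ together with the isolated vertex $1$; so $J=(x_1,x_3)+I_{P_{n-3}}$ and $K=(x_2)+I_{P_{n-2}}$ satisfy $S/J\cong\KK[x_2]\otimes_\KK(\KK[x_4,\dots,x_n]/I_{P_{n-3}})$ and $S/K\cong\KK[x_1]\otimes_\KK(\KK[x_3,\dots,x_n]/I_{P_{n-2}})$. Since adjoining a free variable raises depth by $1$, the induction hypothesis gives $\depth(S/J)=1+\lceil (n-3)/3\rceil=\lceil n/3\rceil$ and $\depth(S/K)=1+\lceil (n-2)/3\rceil=\lceil (n+1)/3\rceil$, and then $\depth(S/(J+K))=\lceil n/3\rceil-1$ by Lemma~\ref{decomposition}(3). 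Plugging the sequence $0\to S/I_G\to S/J\oplus S/K\to S/(J+K)\to 0$ from Lemma~\ref{decomposition}(2) into Lemma~\ref{exact}(1), and noting that $\depth(S/J\oplus S/K)=\min\{\lceil n/3\rceil,\lceil (n+1)/3\rceil\}=\lceil n/3\rceil$ differs from $\depth(S/(J+K))=\lceil n/3\rceil-1$, the equality case yields $\depth(S/I_G)=\min\{\lceil n/3\rceil,(\lceil n/3\rceil-1)+1\}=\lceil n/3\rceil$.

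For the regularity, I would instead delete the degree‑two vertex $v=3$ and use $0\to (S/(I_G:x_3))(-1)\to S/I_G\to S/(I_G,x_3)\to 0$. Here $(I_G:x_3)=(x_2,x_4)+I_{P_{n-4}}$ with $P_{n-4}$ on $\{5,\dots,n\}$ (and $x_1$ isolated), while $(I_G,x_3)=(x_3)+(x_1x_2)+I_{P_{n-3}}$ with $(x_1x_2)\subset\KK[x_1,x_2]$ and $I_{P_{n-3}}\subset\KK[x_4,\dots,x_n]$ in disjoint variables. Thus $\reg(S/(I_G:x_3))=\reg(S/I_{P_{n-4}})=\lfloor (n-3)/3\rfloor$ by induction, whereas by Lemma~\ref{complete}, Lemma~\ref{sum} and induction $\reg(S/(I_G,x_3))=\reg(\KK[x_1,x_2]/(x_1x_2))+\reg(S/I_{P_{n-3}})=1+\lfloor (n-2)/3\rfloor=\lfloor (n+1)/3\rfloor$. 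Now apply Lemma~\ref{exact}(2): the first term of the sequence has regularity $\lfloor (n-3)/3\rfloor+1\le\lfloor (n-2)/3\rfloor+1$, which is \emph{strictly} less than $\reg(S/(I_G,x_3))+1=\lfloor (n-2)/3\rfloor+2$, hence strictly less than $\max\{\reg(S/I_G),\,\reg(S/(I_G,x_3))+1\}$. So the inequality in Lemma~\ref{exact}(2) is strict, which is possible only if its equality‑case hypothesis $\reg(S/I_G)\ne\reg(S/(I_G,x_3))$ fails; therefore $\reg(S/I_G)=\reg(S/(I_G,x_3))=\lfloor (n+1)/3\rfloor$, and the induction is complete.

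The one step that requires a genuine idea rather than bookkeeping is the last: the inequality of Lemma~\ref{exact}(2) alone only bounds $\reg(S/I_G)$ from above, but the extra edge $K_2$ produced in $G\setminus 3$ raises $\reg(S/(I_G,x_3))$ by exactly $1$ over $\reg(S/(I_G:x_3))$, which makes that inequality strict and thereby \emph{forces} the equality case to fail, pinning $\reg(S/I_G)$ down. Everything else — the base cases and the verification that the decompositions above degenerate correctly to the recorded conventions when $P_{n-3}$ or $P_{n-4}$ shrinks to a point or to the empty graph — is routine.
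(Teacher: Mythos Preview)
The paper does not prove this lemma; it is quoted as a known result from \cite{Zhu1} and stated without argument. Your proposal therefore cannot be compared to ``the paper's own proof'' --- there is none to compare to.

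That said, your proof is correct and self-contained using only the tools assembled in Section~1 of the paper. The depth computation via Lemma~\ref{decomposition} at $v=2$ is straightforward. The regularity argument is more interesting: rather than bounding $\reg(S/I_G)$ from above and below separately, you observe that the strict inequality $\reg\bigl((S/(I_G:x_3))(-1)\bigr)<\reg(S/(I_G,x_3))+1$ forces the equality-case hypothesis of Lemma~\ref{exact}(\ref{exact-4}) to fail by contraposition, which immediately pins down $\reg(S/I_G)=\reg(S/(I_G,x_3))$. This is a clean trick and worth remembering. The choice of $v=3$ rather than $v=2$ for regularity is exactly what makes this work: deleting $v=3$ leaves the extra edge $\{1,2\}$ in $(I_G,x_3)$, which contributes the $+1$ needed to separate $\reg(P)$ from $\reg(M)-1$.

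One small remark: the degenerate conventions you record for $P_0$ and $P_1$ are indeed needed when $n\in\{5,6,7\}$ in the regularity step (and $n=5$ in the depth step), and they are consistent with the formulas, so the induction goes through uniformly as you claim.
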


 \begin{Lemma}{\em (\cite[Corollary 6.9]{HT1})}
\label{chordal}
		If $G$ is a chordal graph, then $\reg(S_G/I_{G})=\vartheta(G)$, where $\vartheta(G)$ is the induced matching number of $G$.
	\end{Lemma}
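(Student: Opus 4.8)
The plan is to split the claim into the two inequalities $\reg(S_G/I_G)\ge\vartheta(G)$ and $\reg(S_G/I_G)\le\vartheta(G)$. The first holds for \emph{every} finite simple graph, not just chordal ones (it is essentially Katzman's observation that $\beta_{i,2i}(S/I_G)\neq 0$ whenever $G$ has an induced matching of size $i$, see \cite{HT1}); I would simply invoke it, since it is not where the chordal hypothesis enters. The substantive part is the upper bound, which I would prove by induction on $|V(G)|$, using two features of chordal graphs: every chordal graph with at least one vertex has a simplicial vertex (Dirac's theorem), and every induced subgraph of a chordal graph is chordal.

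For the inductive step, I would first discard the isolated vertices of $G$ (this affects neither $\reg(S_G/I_G)$ nor $\vartheta(G)$), so that $G$ has an edge, and then pick a simplicial vertex $v$ together with a neighbour $u\in N_G(v)$. The point of simpliciality is that it forces $N_G[v]\subseteq N_G[u]$: each vertex of $N_G(v)$ lies in the clique $N_G(v)$ and is therefore adjacent to $u$, and $v$ itself is adjacent to $u$. I then apply Lemma \ref{decomposition} at the vertex $u$: with $J=(x_{N_G(u)})+I_{G\setminus N_G[u]}$ and $K=(x_u)+I_{G\setminus u}$ one gets $I_G=J\cap K$, hence a short exact sequence $0\to S/I_G\to S/J\oplus S/K\to S/(J+K)\to 0$. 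Feeding this into the regularity statement of Lemma \ref{exact}, and using Lemma \ref{lem:direct_sum}(2) together with $\reg(S/J)=\reg(S/(J+K))$ from Lemma \ref{decomposition}(4), gives $\reg(S/I_G)\le\max\{\reg(S/K),\ \reg(S/(J+K))+1\}$.

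It then remains to bound the two terms. Since $K=(x_u)+I_{G\setminus u}$ and $J+K=(x_{N_G[u]})+I_{G\setminus N_G[u]}$ (the latter by Lemma \ref{decomposition}(1)) are sums of ideals in disjoint sets of variables, Lemma \ref{sum} gives $\reg(S/K)=\reg(S_{G\setminus u}/I_{G\setminus u})$ and $\reg(S/(J+K))=\reg(S_{G\setminus N_G[u]}/I_{G\setminus N_G[u]})$; as $G\setminus u$ and $G\setminus N_G[u]$ are chordal graphs on strictly fewer vertices, the induction hypothesis turns these into $\vartheta(G\setminus u)$ and $\vartheta(G\setminus N_G[u])$. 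Now $\vartheta(G\setminus u)\le\vartheta(G)$ trivially, and here is where the simplicial vertex pays off: if $M$ is an induced matching of $G\setminus N_G[u]$, then all of its vertices avoid $N_G[u]$ and hence also $N_G[v]$, so $M\cup\{\{u,v\}\}$ is again an induced matching, this time of $G$; thus $\vartheta(G\setminus N_G[u])+1\le\vartheta(G)$. Therefore $\reg(S/I_G)\le\max\{\vartheta(G\setminus u),\ \vartheta(G\setminus N_G[u])+1\}\le\vartheta(G)$, which together with the lower bound closes the induction; the base cases $|V(G)|\le 1$ (and, if one likes, $G$ complete via Lemma \ref{complete}) are immediate because $\vartheta(G)\in\{0,1\}$ there.

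The main obstacle I foresee is not in the upper bound — that argument is clean — but in making the lower bound $\reg(S_G/I_G)\ge\vartheta(G)$ self-contained: it does not come out of the same short exact sequence, since the equality clause of Lemma \ref{exact} need not be triggered, so one is forced either to cite Katzman's characteristic-free inequality or to prove separately that $\reg$ cannot decrease when passing to the induced subgraph $G|_{V(M)}$ on the vertices of a maximum induced matching $M$ (that subgraph is a disjoint union of $\vartheta(G)$ edges, whose quotient ring has regularity exactly $\vartheta(G)$ by iterating Lemma \ref{sum}). A minor point of care is the degenerate situation where $G\setminus u$ or $G\setminus N_G[u]$ has no edges, so that the relevant edge ideal is zero and Lemma \ref{sum} does not literally apply; there the quotient is a polynomial ring of regularity $0$, which still equals the induced matching number of the edgeless graph, so all estimates go through verbatim.
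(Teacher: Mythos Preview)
The paper does not prove this lemma at all: it is simply quoted from H\`a--Van Tuyl \cite[Corollary~6.9]{HT1} and used as a black box, so there is no ``paper's own proof'' to compare against.

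Your argument is correct and self-contained, and in fact it is nicely adapted to the paper's toolkit: it uses exactly Lemmas~\ref{lem:direct_sum}, \ref{exact}, \ref{sum} and \ref{decomposition}, plus Dirac's theorem on simplicial vertices, and nothing else from outside. The key step---that $N_G[v]\subseteq N_G[u]$ when $v$ is simplicial and $u\in N_G(v)$, so that an induced matching of $G\setminus N_G[u]$ can be enlarged by $\{u,v\}$---is the right place to spend the chordal hypothesis, and your handling of the degenerate edgeless cases is fine. It is worth remarking that the original proof in \cite{HT1} goes a different route (via splittable edges and recursive formulas for graded Betti numbers rather than the Mayer--Vietoris sequence of Lemma~\ref{decomposition}); your version is arguably more elementary and fits the present paper better. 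For the lower bound you are right that the short exact sequence alone does not give it; invoking Lemma~\ref{lem:induced-reg} (which the paper already records) applied to the induced subgraph on the vertices of a maximum induced matching, together with Lemma~\ref{sum}, makes that half fully internal to the paper as well.
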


\section{Study of $F_k^W(K_n)$}
 Bolognini et al. in \cite{BMS} introduced the fan graphs of complete graphs, which are a family of chordal graphs.
In this section, we will study the dimension, depth and regularity of  the quotient ring  of the edge ideal of these graphs. We start with the definition reformulated in \cite{JK} and \cite{SZ}.

\begin{Definition}
    [{\cite[Definition 3.1]{SZ}}]
    \label{fan}
    Let $K_n$ be a complete graph on the   set $[n]$.
   \begin{enumerate}
   	\item Let $U=\{u_1,u_2,\ldots,u_r\}$ be a subset of $[n]$. Suppose that for each $i\in [r]$, a new complete graph $K_{a_i}$ with $a_i>i$ is attached to $K_n$ in such a way that $V(K_n)\cap V(K_{a_i})= \{u_1,u_2, \dots,u_i\}$. We say that the resulting graph is obtained by \emph{adding a fan} to $K_n$ on the set $U$ and $\{K_{a_1},\ldots,K_{a_r}\}$ is the \emph{branch} of that fan on $U$.
   	
   	\item Let $W$ be a subset of $[n]$ and suppose that $W=W_1\sqcup\cdots\sqcup W_k$ is a partition of $W$. Let $F^W_k(K_n)$ be a graph obtained from $K_n$ by adding a fan on each  $W_i$. The resulting graph $F^W_k(K_n)$ is called a \emph{$k$-fan graph} of $K_n$ on the set $W$. For future reference, for each $i\in [k]$, we assume that $\{K_{a_{i,1}},\ldots,K_{a_{i,r_i}}\}$ is the branch of the fan on $W_i=\{w_{i,1},\ldots,w_{i,r_i}\}$. For notational convenience, we also set $h_{i,j}:= a_{i,j}-j$.
   	
   	\item A branch $\{K_{a_{i,1}},\ldots,K_{a_{i,r_i}}\}$ of the fan on $W_i$ is  called a \emph{pure branch} if $h_{i,j}=1$ for every $j\in [r_i]$. Furthermore, if each branch is pure, then $F^W_k(K_n)$ is said to be a \emph{$k$-pure fan graph} of $K_n$ on $W$.
   	
   	\item The complete graph $K_n$ can be considered as a degenerate fan without branches, i.e., with $k=0$.
   \end{enumerate}
\end{Definition}

\begin{Example} \label{example1} Below are examples of a $2$-fan graph and  a $1$-fan graph.
	\vspace{0.5cm}
	\begin{center}
		\begin{tikzpicture}[thick,>=stealth]
			\setlength{\unitlength}{1mm}
			
			\thicklines
		
	
		\put(-60,20){\circle*{1.5}}
		\put(-70,10){\circle*{1.5}}
		\draw[fill = gray] (-7,0) circle(0.06);	
		\put(-60,-10){\circle*{1.5}}
		\put(-50,0){\circle*{1.5}}
		\put(-50,10){\circle*{1.5}}
		
		\put(-60,30){\circle*{1.5}}
		\put(-72,25){\circle*{1.5}}
		\put(-80,20){\circle*{1.5}}
		\put(-40,-15){\circle*{1.5}}
		\put(-35,5){\circle*{1.5}}
		\draw[ultra thick](-7,1)--(-6,2);
		\draw[ultra thick](-7,1)--(-5,1);
		\draw[gray](-7,1)--(-5,0);
		\draw[gray](-7,1)--(-6,-1);	
		\draw[gray](-7,1)--(-7,0);
		\draw[ultra thick](-7,1)--(-8,2);
		\draw[ultra thick](-7,1)--(-7.2,2.5);			
        \draw[ultra thick](-6,2)--(-7.2,2.5);	

		\draw[ultra thick](-7,1)--(-6,3);
		
		\draw[ultra thick](-6,2)--(-5,1);
		\draw[gray](-6,2)--(-5,0);
		\draw[gray](-6,2)--(-6,-1);
		\draw[gray](-6,2)--(-7,0);
		\draw[ultra thick](-6,2)--(-6,3);
		
		\draw[gray](-5,1)--(-5,0);
		\draw[gray](-5,1)--(-6,-1);
		\draw[gray](-5,1)--(-7,0);
		\draw[ultra thick](-5,1)--(-6,3);
		
		\draw[ultra thick](-5,0)--(-6,-1);
		\draw[gray](-5,0)--(-7,0);
		\draw[ultra thick](-5,0)--(-3.5,0.5);
		\draw[ultra thick](-5,0)--(-4,-1.5);
		
		\draw[gray](-6,-1)--(-7,0);
		\draw[ultra thick](-6,-1)--(-4,-1.5);
        \put(-70,-20){$2-$ $fan\, graph$};
\put(-1,16){\circle*{1.8}};
\put(11,16){\circle*{1.8}};
\put(20,5){\circle*{1.8}};
\draw[fill = gray] (1.1,-0.6) circle(0.06);	
\draw[fill = gray] (-0.1,-0.6) circle(0.06);	
\draw[fill = gray] (-1,0.5) circle(0.06);	

\put(-1,30){\circle*{1.8}};
\put(11,30){\circle*{1.8}};
\put(27,25){\circle*{1.8}};
\put(37,20){\circle*{1.8}};
\put(37,11){\circle*{1.8}};
\put(-18,14){\circle*{1.8}};
\put(-13,25){\circle*{1.8}}
\draw[ultra thick](-0.1,1.6)--(1.1,1.6);
\draw[ultra thick](-0.1,1.6)--(2,0.5);
\draw[gray](-0.1,1.6)--(1.1,-0.6);
\draw[gray](-0.1,1.6)--(-0.1,-0.6);
\draw[gray](-0.1,1.6)--(-1,0.5);
\draw[ultra thick](-0.1,1.6)--(-0.1,3);
\draw[ultra thick](-0.1,1.6)--(1.1,3);
\draw[ultra thick](-0.1,1.6)--(2.7,2.5);
\draw[ultra thick](-0.1,1.6)--(3.7,2);
\draw[ultra thick](-0.1,1.6)--(3.7,1.1);
\draw[ultra thick](-0.1,1.6)--(-1.8,1.4);
\draw[ultra thick](-0.1,1.6)--(-1.3,2.5);

\draw[ultra thick](1.1,1.6)--(2,0.5);
\draw[gray](1.1,1.6)--(1.1,-0.6);
\draw[gray](1.1,1.6)--(-0.1,-0.6);
\draw[gray](1.1,1.6)--(-1,0.5);
\draw[ultra thick](1.1,1.6)--(2.7,2.5);
\draw[ultra thick](1.1,1.6)--(3.7,2);
\draw[ultra thick](1.1,1.6)--(3.7,1.1);
\draw[ultra thick](1.1,1.6)--(1.1,3);
\draw[ultra thick](1.1,1.6)--(-0.1,3);

\draw[gray](2,0.5)--(1.1,-0.6);
\draw[gray](2,0.5)--(-0.1,-0.6);
\draw[gray](2,0.5)--(-1,0.5);
\draw[ultra thick](2,0.5)--(2.7,2.5);
\draw[ultra thick](2,0.5)--(3.7,2);
\draw[ultra thick](2,0.5)--(3.7,1.1);

\draw[gray](1.1,-0.6)--(-0.1,-0.6);
\draw[gray](1.1,-0.6)--(-1,0.5);
\draw[gray](-0.1,-0.6)--(-1,0.5);
\draw[gray](-1,0.5)--(-0.1,1.6);

\draw[ultra thick](-0.1,3)--(1.1,3);

\draw[ultra thick](2.7,2.5)--(3.7,2);
\draw[ultra thick](2.7,2.5)--(3.7,1.1);

\draw[ultra thick](3.7,2)--(3.7,1.1);
		
\draw[ultra thick](-1.8,1.4)--(-1.3,2.5);
 \put(-8,-20){$1-$ $fan\,  graph$};
\end{tikzpicture}
\end{center}
\vspace{0.8cm}
\hspace{3.0cm}Figure $1$ Example of  $2$-fan and  $1$-fan graphs
\end{Example}

For a positive integer $n\ge 2$  we set $[n]=\{1,\ldots,n\}$.  We need the following lemma.
\begin{Lemma} {\em (\cite[Lemma 9.1.4]{HH})}
	\label{prime} Let $G$ be a simple graph on  the  set $[n]$. A subset $C=\{i_1,\ldots,i_r\}\subset [n]$ is a
	vertex cover of $G$ if and only if the prime ideal $P_C=(x_{i_1},\ldots,x_{i_r})$ contains
	$I_G$. In particular, $C$ is a minimal vertex cover of $G$ if and only if $P_C$ is a
	minimal prime ideal of $I_G$.
\end{Lemma}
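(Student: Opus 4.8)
The plan is to deduce both assertions directly from the definition of the edge ideal, with a short order-comparison argument at the end. First I would unwind both sides of the main equivalence. Since $P_C=(x_{i_1},\dots,x_{i_r})$ is a monomial prime, for any monomial $m$ one has $m\in P_C$ if and only if $\supp(m)\cap C\neq\emptyset$; in particular $x_ix_j\in P_C$ exactly when $i\in C$ or $j\in C$. Now $I_G$ is generated by the monomials $x_ix_j$ with $\{i,j\}\in E(G)$. If $C$ is a vertex cover, then every such generator lies in $P_C$, so $I_G\subseteq P_C$. Conversely, if $I_G\subseteq P_C$, then for each edge $\{i,j\}$ we get $x_ix_j\in P_C$, hence $i\in C$ or $j\in C$, so $C$ is a vertex cover.

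For the ``in particular'' part I would first record that the correspondence $C\mapsto P_C$ is inclusion-preserving in both directions: $P_C\subseteq P_{C'}$ iff $C\subseteq C'$, again because $x_i\in P_{C'}$ iff $i\in C'$. Granting this, it remains to show that every minimal prime over $I_G$ has the form $P_C$, and for this I would prove the decomposition $I_G=\bigcap_C P_C$, the intersection running over all minimal vertex covers $C$ of $G$. The inclusion $\subseteq$ is the first part. For $\supseteq$: since all ideals involved are monomial, it suffices to take a monomial $m$ in the right-hand side and exhibit an edge $\{i,j\}$ with $x_ix_j\mid m$. If no such edge existed, then $\supp(m)$ would be an independent set of $G$, so $V(G)\setminus\supp(m)$ would be a vertex cover and would therefore contain some minimal vertex cover $C$; but $m\in P_C$ forces $\supp(m)\cap C\neq\emptyset$, contradicting $C\subseteq V(G)\setminus\supp(m)$.

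Finally, from $I_G=\bigcap_C P_C$ over minimal vertex covers $C$, any prime $Q\supseteq I_G$ contains the (finite) product $\prod_C P_C$ and hence some $P_C$; so if $Q$ is minimal over $I_G$ it equals that $P_C$ with $C$ a minimal vertex cover. Conversely, each such $P_C$ is minimal over $I_G$, since a prime $Q$ with $I_G\subseteq Q\subseteq P_C$ contains some $P_{C'}$ with $C'$ a minimal vertex cover, forcing $C'\subseteq C$ and hence $C'=C$ by minimality of $C$, so $Q=P_C$. Together with the order comparison this yields the bijection between $\Min(I_G)$ and the minimal vertex covers of $G$. The only point that needs a moment's thought — and where a hasty argument could slip — is the $\supseteq$ direction of the decomposition, namely the implication ``$\supp(m)$ contains no edge $\Rightarrow$ $V(G)\setminus\supp(m)$ is a vertex cover''; everything else is formal manipulation with monomial primes.
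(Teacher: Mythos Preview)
Your argument is correct. The paper itself does not prove this lemma; it is quoted from \cite[Lemma 9.1.4]{HH} without proof, so there is nothing to compare against beyond noting that your proof is the standard one (establish the primary decomposition $I_G=\bigcap_C P_C$ over minimal vertex covers via the independent-set/complement observation, then read off $\Min(I_G)$).
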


The following theorem provides an exact formula for the dimension  of the quotient ring of the edge ideal of a $k$-fan graph.

\begin{Theorem}
	\label{thm:fan_Dimension}
Let $G=F_k^W(K_n)$ be a $k$-fan graph of the complete graph $K_n$ on the set $W\subseteq [n]$ with $n\ge 2$  and $W=W_1\sqcup\cdots\sqcup W_k$ be a partition of $W$. Then
	\[
	\dim(S/I_G)= \begin{cases}
		|W|+1 & \text {if $W \subsetneq [n]$};\\
		n&   \text {if $W=[n]$}.
	\end{cases}
	\]
\end{Theorem}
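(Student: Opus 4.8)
The plan is to compute $\dim(S/I_G)$ via the well-known identity $\dim(S/I_G) = |V(G)| - \mathrm{bight}(I_G)$, where $\mathrm{bight}(I_G)$ is the maximal height of a minimal prime of $I_G$, equivalently (by Lemma \ref{prime}) the maximal size of a minimal vertex cover of $G$; so it suffices to compute the minimal vertex cover of $G = F_k^W(K_n)$ of largest cardinality, and then subtract from $|V(G)|$. First I would fix notation: writing $W = W_1 \sqcup \cdots \sqcup W_k$ with $W_i = \{w_{i,1},\dots,w_{i,r_i}\}$, and $\{K_{a_{i,1}},\dots,K_{a_{i,r_i}}\}$ the branch on $W_i$ with $V(K_n) \cap V(K_{a_{i,j}}) = \{w_{i,1},\dots,w_{i,j}\}$, the total vertex set has size $|V(G)| = n + \sum_{i=1}^k \sum_{j=1}^{r_i} h_{i,j}$ where $h_{i,j} = a_{i,j} - j$ counts the "new" vertices contributed by $K_{a_{i,j}}$.

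The key combinatorial step is to determine the maximum minimal vertex cover. Since every maximal clique must be "hit" on all but (at most) one vertex by any vertex cover, and conversely the complement of a maximal independent set is a minimal vertex cover, it is cleaner to compute the \emph{maximum independent set} (i.e. the independence number $\alpha(G)$), so that $\mathrm{bight}(I_G) = |V(G)| - \alpha(G)$ and hence $\dim(S/I_G) = |V(G)| - (|V(G)| - \alpha(G)) = \alpha(G)$. Wait — that gives $\dim(S/I_G) = \alpha(G)$, which is exactly the classical fact that the Krull dimension of $S/I_G$ equals the independence number. So the entire theorem reduces to: compute $\alpha(F_k^W(K_n))$. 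Now an independent set meets each clique $K_{a_{i,j}}$ in at most one vertex; the cliques $K_{a_{i,1}} \supset \cdots$ share the vertices $w_{i,1},\dots,w_{i,j}$ in a nested fashion. I would argue: if $W \neq [n]$, pick one vertex $v_0 \in [n] \setminus W$; then $v_0$ together with one "fresh" vertex from the outermost clique $K_{a_{i,r_i}}$ of each branch and one vertex $w_{i,1}$ from... — more carefully, for each of the $k$ fans one can take the vertex $w_{i,1}$ (which lies in all cliques of that branch, so only "costs" one independent vertex for the whole branch while blocking nothing else) — hmm, but $w_{i,1} \in K_n$, so at most one $w_{i,1}$ can be chosen. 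The correct count: from $K_n$ one may take exactly one vertex (it is a clique); to maximize, take $v_0 \in [n]\setminus W$, which is disjoint from every branch, hence extends independently; then from each branch $i$, the cliques $K_{a_{i,1}},\dots,K_{a_{i,r_i}}$ pairwise intersect (they share $w_{i,1}$), so an independent set uses at most one vertex among all of $\bigcup_j V(K_{a_{i,j}}) \setminus [n]$... no: $V(K_{a_{i,1}}) \setminus \{w_{i,1}\}$ and $V(K_{a_{i,2}}) \setminus \{w_{i,1},w_{i,2}\}$ need not be adjacent. I would therefore set up the adjacency structure precisely and show by a careful exchange/greedy argument that $\alpha(G) = 1 + |W|$ when $W \subsetneq [n]$ (the $1$ from a vertex outside $W$, and then — counting shows one independent vertex per element of $W$ is both achievable and optimal across all the branch cliques), and $\alpha(G) = n$ when $W = [n]$ (here $[n]\setminus W = \emptyset$; the fans still force the bound, and equality is achieved by taking the $h_{i,j}$-structure into account — actually when $W=[n]$ the answer $n$ suggests the fans contribute nothing new to the independence number, i.e. one can pick $n$ pairwise non-adjacent vertices: one from the far end of each maximal clique plus the rest of $[n]$ appropriately, using $a_{i,j} > j$).

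Concretely, the steps in order: (1) recall $\dim(S/I_G) = \alpha(G)$, or alternatively argue directly with minimal vertex covers via Lemma \ref{prime} and $\dim(S/I_G) = |V(G)| - \mathrm{ht}(I_G)$; (2) enumerate $|V(G)|$ and the maximal cliques of $F_k^W(K_n)$ — these are $K_n$ itself (when $n$ is large enough relative to the fans) together with each $K_{a_{i,r_i}}$, and intermediate ones as dictated by the nesting; (3) produce an explicit independent set of the claimed size in each case, handling $W \subsetneq [n]$ (use a vertex of $[n]\setminus W$) and $W = [n]$ separately; (4) prove optimality by showing any independent set has size at most the claimed bound, via the clique-cover / exchange argument. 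The main obstacle I expect is step (4) in the subtle case $W = [n]$: one must show that even though each fan adds $\sum_j h_{i,j} \geq r_i$ new vertices, these cannot be used to push the independence number above $n$, which requires carefully tracking how the nested cliques $K_{a_{i,1}} \subset$-pattern overlap and using $a_{i,j} > j$ (equivalently $h_{i,j} \geq 1$) to see that the "savings" from a vertex outside $W$ is exactly what separates the two cases. Keeping the bookkeeping of the $h_{i,j}$ straight while proving the upper bound is where the real work lies.
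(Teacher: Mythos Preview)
Your opening identity $\dim(S/I_G)=|V(G)|-\bight(I_G)$ is wrong: Krull dimension is $|V(G)|$ minus the \emph{height} of $I_G$ (the minimum, not the maximum, height of a minimal prime). Your next claim, that $\bight(I_G)=|V(G)|-\alpha(G)$, is wrong for the same reason; the complement of a maximum independent set is a \emph{minimum} vertex cover, so $|V(G)|-\alpha(G)=\tau(G)=\height(I_G)$. You reach the correct reduction $\dim(S/I_G)=\alpha(G)$ only because these two errors cancel. Once that is repaired, your plan is the paper's proof in complementary language: the paper works with minimum vertex covers, you with maximum independent sets, and the translation is $\alpha(G)=|V(G)|-\tau(G)$.

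Your step (4) is where you get tangled, and the clean argument you are circling around is exactly what the paper does. There is no exchange argument and no subtle bookkeeping with the $h_{i,j}$: the cliques $K_n,K_{a_{1,1}},\ldots,K_{a_{k,r_k}}$ (there are $|W|+1$ of them) cover every vertex of $G$, so any independent set picks at most one vertex from each, giving $\alpha(G)\le |W|+1$. When $W=[n]$, the $|W|=n$ branch cliques $K_{a_{i,j}}$ already cover $V(G)$ by themselves (every $w_{i,j}\in [n]$ lies in $K_{a_{i,j}}$), so $\alpha(G)\le n$; there is nothing delicate about this case. For the matching lower bound, pick one new vertex $u_{i,j}\in V(K_{a_{i,j}})\setminus [n]$ from each branch clique; these are pairwise non-adjacent since distinct branch cliques meet only inside $[n]$. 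If $W\subsetneq[n]$, adjoin any $v\in[n]\setminus W$, which is adjacent to none of the $u_{i,j}$. This is precisely the paper's vertex cover construction, read as its complement.
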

\begin{proof}	
	Let $I_G=P_1 \cap \cdots  \cap P_m$ be   the irredundant  primary decomposition of $I_G$, where  $P_i=(x_{i_1},x_{i_2},\ldots,x_{i_{r_i}})$ for $i\in [m]$, then $\dim(S/I_G)=|V(G)|-\min \{r_i: i \in [m] \}$ and  each
	$C_i=\{i_1,i_2,\ldots,i_{r_i}\}$ is a minimal vertex cover of $G$ by Lemma \ref{prime}.
Therefore,  to compute the dimension of $S/I_G$, we only need to find the vertex cover of G with the smallest size.
It is known that for a complete graph $H$, the set  $C\subsetneq V(H)$ is its minimal vertex cover if and only if $|C|=|V(H)|-1$.
	For each $i\in [k]$, let $W_i=\{w_{i,1},\ldots,w_{i,r_i}\}$ and $\{K_{a_{i,1}},\ldots,K_{a_{i,r_i}}\}$ be the branch of the fan on $W_i$. We distinguish between the following two cases:	
	
	(i) If $W \subsetneq [n]$, 	we choose   $u_{i,j}\in V(K_{a_{i,j}})\backslash W_i$ for each $i\in [k], j\in [r_i]$ and  $v\in [n]\backslash W$. Let  $C_{i,j}=V(K_{a_{i,j}})\backslash \{u_{i,j}\}$, $C'=V(K_n)\backslash \{v\}$ and $C=(\bigcup\limits_{i=1}^{k}\bigcup\limits_{j=1}^{r_i}C_{i,j})\cup C'$. Then it is clear that  $u_{i,j},v\notin C$ and $|C|=|V(G)|-\sum\limits_{i=1}^{k}|W_i|-1=|V(G)|-|W|-1$.
	
	Claim: $C$ is a  vertex cover of $G$ of minimum size.
	
	Indeed, for any $e\in E(G)$, we have $e\in E(K_n)$ or $e\in E(K_{a_{i,j}})$ for some  $i\in[k]$, $j\in[r_i]$. It is clear that $e\cap C\ne\emptyset$ by the choice of $C$, which implies that $C$ is a vertex cover of $G$. On the other hand,  if there  exists some  vertex cover $D$ of $G$ with $|D|<|V(G)|-|W|-1$. Since there  are exactly $(|W|+1)$  maximal cliques  $K_n$ and  $K_{a_{i,j}}$ for  $i\in[k]$, $j\in[r_i]$ in $G$, there must be  an edge $\{u,v\}\in E(G)$ such that $u,v\notin D$, which contradicts with the fact that $D$ is a vertex cover of $G$.
	
	(ii) If $W=[n]$,	we choose   $u_{i,j}\in V(K_{a_{i,j}})\backslash W_i$ for every $i\in [k], j\in [r_i]$, $C_{i,j}=V(K_{a_{i,j}})\backslash \{u_{i,j}\}$  and $C=\bigcup\limits_{i=1}^{k}\bigcup\limits_{j=1}^{r_i}C_{i,j}$. So $|C|=|V(G)|-|W|$. In this case, we can obtain that $C$ is a  vertex cover of $G$ with the minimum size, using similar arguments as in (i).  So in this case we get $\dim(S/I_G)=n$ in this case. And this completes the proof.
\end{proof}	

For a monomial $u\in S$, we set $\supp(u)=\{x_i : x_i |u\}$. For  a monomial ideal $I$ with  $\mathcal{G}(I)=\{u_1,\ldots,u_m\}$, we set
$\supp(I)=\bigcup\limits_{i=1}^{m}\supp(u_i)$. We  also denote by $S_G$ the polynomial rings in the variables corresponding to $V(G)$  when more graphs are involved.

\begin{Proposition}
	\label{1-fan}
	Let $G=F_1^W(K_n)$ be  a $1$-fan graph of the complete graph  $K_n$ on the set $W \subseteq [n]$ with $n\ge 2$. Then \[\depth\,(S/I_G)=1.\]
\end{Proposition}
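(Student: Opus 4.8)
The plan is to exhibit a vertex of $G$ that is adjacent to all the others, and then to combine the decomposition of Lemma~\ref{decomposition} with the depth estimate of Lemma~\ref{exact}.

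First I would single out $v:=w_{1,1}$, the first vertex of the unique fan $W=W_1=\{w_{1,1},\dots,w_{1,r_1}\}$, and verify that $N_G[v]=V(G)$. Inside $K_n$ the vertex $v$ is joined to every vertex of $[n]\setminus\{v\}$; and for each $j\in[r_1]$ we have $w_{1,1}\in V(K_n)\cap V(K_{a_{1,j}})=\{w_{1,1},\dots,w_{1,j}\}$, so $v$ belongs to the clique $K_{a_{1,j}}$ and is hence joined to each of its remaining vertices, in particular to all $h_{1,j}$ vertices of $V(K_{a_{1,j}})\setminus W_1$. Since $[n]$ together with the sets $V(K_{a_{1,j}})\setminus W_1$ ($j\in[r_1]$) exhaust $V(G)$, this gives $N_G[v]=V(G)$. (If $W=\emptyset$ then $G=K_n$ and the statement is Lemma~\ref{complete}, so from now on $r_1\ge 1$.)

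Next I would apply Lemma~\ref{decomposition} with this $v$. As $G\setminus N_G[v]$ has no vertices, $I_{G\setminus N_G[v]}=(0)$, so $J=(x_{N_G(v)})=(x_u\mid u\in V(G)\setminus\{v\})$ is prime with $S/J\cong\KK[x_v]$, while $K=(x_v)+I_{G\setminus v}$ and $J+K=(x_{N_G[v]})=\mm$. Combined with $I_G=J\cap K$ from Lemma~\ref{decomposition}(2), this yields the short exact sequence
\[
0\longrightarrow S/I_G\longrightarrow S/J\oplus S/K\longrightarrow S/\mm\longrightarrow 0 .
\]
Here $\depth(S/J)=\depth(\KK[x_v])=1$ and $\depth(S/\mm)=0$, and $S/K\cong S_{G\setminus v}/I_{G\setminus v}$ has depth $\ge 1$. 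For the last point, note that for any finite simple graph $H$ on a nonempty vertex set one has $\depth(S_H/I_H)\ge 1$: this is clear when $H$ has no edge, and otherwise $S_H/I_H$ is reduced, its associated primes are the $P_C$ with $C$ a minimal vertex cover of $H$ by Lemma~\ref{prime}, and $V(H)$ itself is never a minimal vertex cover (removing any single vertex still covers every edge), so the graded maximal ideal of $S_H$ is not an associated prime. Consequently $\depth(S/J\oplus S/K)=\min\{1,\depth(S/K)\}=1\ne 0=\depth(S/\mm)$, so the equality clause in Lemma~\ref{exact}(1) applies to the sequence above and forces $\depth(S/I_G)=\min\{1,\,0+1\}=1$.

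The only step here that requires an idea rather than bookkeeping is the first one---seeing that a $1$-fan graph always has $w_{1,1}$ as a universal vertex; once this is noticed everything else is mechanical. (One could also derive the upper bound $\depth(S/I_G)\le 1$ directly from the standard inequality $\depth M\le\dim(S/\pp)$ for $\pp\in\Ass M$, taking $\pp=(x_u\mid u\in V(G)\setminus\{v\})$, which lies in $\Ass(S/I_G)$ because $V(G)\setminus\{v\}$ is a minimal vertex cover of $G$; the reverse inequality $\depth(S/I_G)\ge 1$ then comes from the displayed sequence exactly as above, or from the same reducedness argument applied to $G$ itself.)
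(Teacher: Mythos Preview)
Your proof is correct and shares the paper's key observation (that $w_{1,1}$ is adjacent to every other vertex of $G$, so with $v=w_{1,1}$ one has $J+K=\mm$). The difference is in how the term $\depth(S/K)$ is handled: the paper proceeds by induction on $|W|$, explicitly identifying $G\setminus w_{1,1}$ as $K_{a_{1,1}-1}\sqcup F_1^{W\setminus\{w_{1,1}\}}(K_{n-1})$ and computing $\depth(S/K)=2$ in every case, whereas you bypass the induction entirely by only using the weak bound $\depth(S/K)\ge 1$, obtained from the general fact that a quotient by a squarefree monomial ideal (other than $\mm$) is reduced with no embedded maximal ideal. Since the equality clause of Lemma~\ref{exact}(\ref{exact-2}) only needs $\depth(S/J\oplus S/K)\ne \depth(S/\mm)$, your weaker bound is enough and the argument collapses to a single step. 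This is a genuine simplification over the paper's inductive proof; the only cost is that you do not obtain the exact value $\depth(S/K)=2$ along the way, but that value is not used elsewhere.
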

\begin{proof} Let $V(K_n)\cap V(K_{a_1})=\{w_1,\ldots,w_{r_1}\}$.
	We apply induction on $|W|$.  If  $|W|=1$, then $V(K_n)\cap V(K_{a_1})=\{w_1\}$. In this case, let $J=(x_{N_G(w_1)})$ and $K=(x_{w_1})+I_{K_{n-1}}+I_{K_{a_1-1}}$, then
$\depth(S/J)=1$, $\depth(S/K)=2$ by Lemma \ref{complete}, since $\{x_{w_1}\}\cap \supp(I_{K_{n-1}})\cap\supp(I_{K_{a_1-1}})=\emptyset$. Meanwhile, we have  $I_G=J\cap K$ and  $J+K=(x_{V(G)})$.
 So $\depth(S/(J+K))=0$.  Hence 	
	the desired result holds by using Lemma \ref{lem:direct_sum}(1) and Lemma\ref{exact}(\ref{exact-2}) to the following exact sequence
	\begin{equation}
		0\longrightarrow \frac{S}{J \cap K}\longrightarrow \frac{S}{J} \oplus \frac{S}{K}\longrightarrow \frac{S}{J+K} \longrightarrow 0,
		\label{eqn:SES-1}
	\end{equation}	
	Now, we assume that $|W|\ge 2$. Let $J=(x_{N_G(w_1)})$ and $K=(x_{w_1})+I_{G \backslash w_1}$, then $J+K=(x_{V(G)})$. It follows that $\depth(S/(J+K))=0$ and  $\depth(S/J)=1$. Since $G \backslash w_1$ is the disjoint union $K_{a_1-1}\sqcup F_1^{W\backslash \{w_1\}}(K_{n-1})$, 
we have $K=(x_{w_1})+I_{K_{a_1-1}}+I_{G\backslash V(K_{a_1})}$. Since   $|W\backslash \{w_1\}|=|W|-1$,  we know by  induction that
	\[
	\depth(S/K)=\depth(S_{K_{a_1-1}}/I_{K_{a_1-1}})+\depth(S_{G\backslash V(K_{a_1})}/I_{G\backslash V(K_{a_1})})=1+1=2.
	\]
	Again applying Lemma \ref{exact}(\ref{exact-2}) and Lemma \ref{decomposition}(\ref{decomposition-2}) to the  exact sequence (\ref{eqn:SES-1}),  we get the desired result.
\end{proof}

\begin{Theorem}
	\label{depth of F_k}
	Let $G=F_k^W(K_n)$ be a $k$-fan graph of the complete graph $K_n$ on the set $W\subseteq [n]$ with $n\ge 2$  and $W=W_1\sqcup\cdots\sqcup W_k$ be a partition of $W$. Then
	\[
	\depth(S/I_G)=1+|W| -\max\{|W_1|, |W_2|,\ldots,|W_k|\}.
	\]
\end{Theorem}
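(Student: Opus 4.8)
The plan is to prove this by induction on $k$, the number of fans, with the base cases $k=0$ (complete graph, handled by Lemma~\ref{complete}, giving depth $1$ and $\max|W_i|=0$) and $k=1$ (handled by Proposition~\ref{1-fan}, giving depth $1 = 1+|W|-|W_1|$). For the inductive step with $k\ge 2$, I would single out a convenient vertex $v$ and apply the short exact sequence coming from Lemma~\ref{decomposition}, namely
\begin{equation*}
0\longrightarrow \frac{S}{I_G}\longrightarrow \frac{S}{J}\oplus\frac{S}{K}\longrightarrow \frac{S}{J+K}\longrightarrow 0,
\end{equation*}
where $J=(x_{N_G(v)})+I_{G\backslash N_G[v]}$ and $K=(x_v)+I_{G\backslash v}$, together with Lemma~\ref{decomposition}(\ref{decomposition-3}) which says $\depth(S/J)=\depth(S/(J+K))+1$, and the depth inequality of Lemma~\ref{exact}(\ref{exact-2}).

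\textbf{Choice of vertex.} The natural choice, motivated by the proof of Proposition~\ref{1-fan}, is to pick $v=w_{i_0,1}$, the ``first'' vertex of the fan on the block $W_{i_0}$ where $|W_{i_0}|$ is \emph{not} the maximum, if such a block exists; otherwise (all blocks of equal size) one still picks $w_{i_0,1}$ for some fixed $i_0$. Here $G\backslash w_{i_0,1}$ decomposes as the disjoint union $K_{a_{i_0,1}-1}\sqcup F_{k-1}^{W'}(K_{n-1})$, where $W' = W\backslash\{w_{i_0,1}\}$ with the induced partition having $k-1$ blocks $W_1,\dots,\widehat{W_{i_0}},\dots,W_k$ together with $W_{i_0}\backslash\{w_{i_0,1}\}$ (if nonempty). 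By Lemma~\ref{sum}(\ref{sum-2}), Lemma~\ref{complete}, and the induction hypothesis applied to the $(k-1)$-fan graph $F_{k-1}^{W'}(K_{n-1})$,
\begin{equation*}
\depth(S/K) = 1 + \bigl(1 + |W'| - \max\{|W_1|,\dots,|W_{i_0}|-1,\dots,|W_k|\}\bigr),
\end{equation*}
and one checks that $|W'| = |W|-1$ and, under the vertex choice, $\max\{\dots,|W_{i_0}|-1,\dots\} = \max\{|W_1|,\dots,|W_k|\}$ — this is where choosing $i_0$ with $|W_{i_0}|$ not attaining the maximum (when $k\ge 2$, $|W|\ge 2$) matters, as it keeps the maximum over the remaining blocks unchanged. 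This yields $\depth(S/K) = 3 + |W| - \max\{|W_i|\}$. Meanwhile $v=w_{i_0,1}$ is adjacent to every other vertex it shares a clique with, and one computes $\depth(S/J)$ via Lemma~\ref{decomposition}(\ref{decomposition-3}); the graph $G\backslash N_G[v]$ is again a disjoint union of a smaller $k$-fan (on $K_{n-|N[v]\cap V(K_n)|}$ or so) with leftover cliques, so $\depth(S/(J+K))$ is handled by the induction hypothesis plus Lemmas~\ref{sum} and~\ref{complete}, giving $\depth(S/J) = 1 + \depth(S/(J+K))$.

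\textbf{Combining and the main obstacle.} Once the two values $\depth(S/J)$ and $\depth(S/K)$ are in hand, Lemma~\ref{exact}(\ref{exact-2}) gives $\depth(S/I_G) \ge \min\{\depth(S/K),\,\depth(S/J)+1\}$ with equality when $\depth(S/K)\neq \depth(S/J)+1$, and since $\depth(S/J)+1 = \depth(S/(J+K))+2$ one reduces everything to comparing $\depth(S/K)$ with $\depth(S/(J+K))+2$. I expect the main obstacle to be the bookkeeping in identifying $G\backslash N_G[w_{i_0,1}]$ precisely as a $k$-fan graph (or disjoint union thereof) with the correct partition and correct underlying complete graph — removing the closed neighborhood of $w_{i_0,1}$ deletes not just $W_{i_0}$'s first vertex but potentially other $W_j$-vertices and many vertices of $K_n$, so one must carefully track which fans survive, which shrink, and whether any degenerate to a single clique — and then verifying that the resulting $\max$ over block sizes and the cardinality of the new $W$ make the arithmetic close up to exactly $1+|W|-\max\{|W_i|\}$, including the edge case where several blocks tie for the maximum or where $W=[n]$ versus $W\subsetneq[n]$. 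A secondary subtlety is guaranteeing the strict-inequality condition in Lemma~\ref{exact}(\ref{exact-2}) (that $\depth(S/J)\neq\depth(S/(J+K))$, equivalently the nonvanishing needed for equality rather than mere inequality), which should follow formally from Lemma~\ref{decomposition}(\ref{decomposition-3}) but must be invoked cleanly.
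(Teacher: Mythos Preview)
Your overall structure is right, but the key vertex choice is inverted, and this creates a genuine gap. You propose to pick $v=w_{i_0,1}$ from a block $W_{i_0}$ \emph{not} attaining the maximum; the paper instead picks $v=w_{1,1}$ from a block $W_1$ with $|W_1|=T:=\max_i|W_i|$, and this choice is essential. First, a structural correction that dissolves what you flag as your ``main obstacle'': since $v\in V(K_n)$ one has $N_G[v]\supseteq V(K_n)$, so $G\backslash N_G[v]$ contains no vertex of $K_n$ at all. It is simply the disjoint union of the cliques on $V(K_{a_{i',j}})\backslash V(K_n)$ for $i'\ne i_0$ and $j\in[r_{i'}]$ --- not a smaller fan graph. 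Hence by Lemma~\ref{complete} one reads off directly $\depth(S/(J+K))=|W|-|W_{i_0}|$ and $\depth(S/J)=|W|-|W_{i_0}|+1$. On the other side, your own formula (with corrected arithmetic: it gives $1+|W|-T$, not $3+|W|-T$) yields $\depth(S/K)=1+|W|-T$.

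Here is the gap. The equality clause of Lemma~\ref{exact}(\ref{exact-2}) requires $\depth(S/J\oplus S/K)\ne\depth(S/(J+K))$, i.e.\ $1+|W|-T\ne |W|-|W_{i_0}|$, i.e.\ $|W_{i_0}|\ne T-1$. When every non-maximal block has size exactly $T-1$ (for instance $k=2$, $|W_1|=2$, $|W_2|=1$), your choice forces $|W_{i_0}|=T-1$ and you obtain only the lower bound $\depth(S/I_G)\ge 1+|W|-T$, with no mechanism for the matching upper bound; Lemma~\ref{decomposition}(\ref{decomposition-3}) does not rescue this, contrary to what you suggest. The paper's choice avoids the issue entirely: with $|W_1|=T$ one gets $\depth(S/(J+K))=|W|-T$ exactly, while $\depth(S/K)=1+|W|-\max\{T-1,|W_2|,\dots,|W_k|\}\ge 1+|W|-T$, so $\depth(S/J\oplus S/K)=1+|W|-T\ne |W|-T$ always, and Lemma~\ref{exact}(\ref{exact-2}) gives the desired equality. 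A secondary point: removing $w_{i_0,1}$ from a block of size $\ge 2$ does not decrease $k$, so induction on $k$ alone is insufficient; the paper inducts on $k$ and $n$ together, passing to $K_{n-1}$.
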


\begin{proof}
	We prove the statement by induction on $k$ and $n$. The  case for $k=1$ is shown in Proposition \ref{1-fan}. If $n=k=2$, then $|W_1|=|W_2|=1$. Let $V(K_n)\cap V(K_{a_{i,1}})=W_i$  with $W_i=\{w_{i,1}\}$ for $i=1,2$. In this case,  $G$ is a graph obtained by joining two complete graphs $K_{a_{1,1}}$ and $K_{a_{2,1}}$ with a common edge $\{w_{1,1},w_{2,1}\}$.  If  $K_{a_{1,1}}=K_{a_{2,1}}=K_2$, then in this case $G$ is a path with $4$ vertices.  So we get  $\depth(S/I_G)=2$ by Lemma \ref{path}. Now, we may assume that $K_{a_{2,1}}$ is a clique with at least three vertices. In this case, we choose $J=(x_{N_G(w_1)})+I_{G \backslash N_G[w_1]}$,  $K=(x_{w_1})+I_{G \backslash w_1}$ with $w_1=w_{1,1}$, then, by  Lemma \ref{decomposition}, we get that  $I_G=J\cap K$, $\depth\,(S/J)=\depth\,(S/(J+K))+1$ and $J+K=(x_{N_G[w_1]})+I_{G \backslash N_G[w_1]}$, where
	$G\backslash N_G[w_1]=K_{a_{2,1}-1}$ is a clique with at least two vertices. Hence $\depth(S/(J+K))=1$. Note that $G \backslash w_1$ is the disjoint union $K_{a_{1,1}-1}\sqcup K_{a_{2,1}}$,
we have $\depth(S/K)=2$. By Lemma \ref{exact}(\ref{exact-2})  and the  exact sequence (\ref{eqn:SES-1}),  we get
	$\depth(S/I_G)=2$, which confirms the claim in this case.
	
	Now we can assume that $n\ge 3$ and $k\geq 2$. Suppose that  $\max\{|W_1|,\ldots,|W_k|\}=T$ and $T=|W_1|$.
	Let $V(K_n)\cap V(K_{a_{1,r_1}})=\{w_{1,1},\ldots,w_{1,r_1}\}$.  Let $J=(x_{N_G(w_1)})+I_{G \backslash N_G[w_1]}$,  $K=(x_{w_1})+I_{G \backslash w_1}$  where $w_1=w_{1,1}$, then $I_G=J\cap K$, $\depth\,(S/J)=\depth\,(S/(J+K))+1$ and $J+K=(x_{N_G[w_1]})+I_{G \backslash N_G[w_1]}$  by   Lemma \ref{decomposition}.
	We distinguish between  the following two cases:
	
	(i) If $T=1$, then  $|W_i|=1$ for each $i\in [k]$. In this case, $G\backslash N_G[w_1]$ has $(k-1)$ connected components  consisting of some cliques and isolated vertices. Hence
	$\depth(S/(J+K))=k-1$ by Lemma \ref{complete}. At the same time, $G \backslash w_1$ is the disjoint union $K_{a_{1,1}-1}\sqcup F_{k-1}^{W\backslash \{w_1\}}(K_{n-1})$, so  we have $\depth(S/K)=1+(k-1)=k$  by induction,  Lemma \ref{sum}(\ref{sum-2}) and  Lemma \ref{complete}.
	Applying Lemma \ref{exact}(\ref{exact-2}) and Lemma \ref{decomposition}(\ref{decomposition-2}) to the  exact sequence (\ref{eqn:SES-1}),  we get $\depth(S/I_G)=k=1+|W|-T$, as wished.
	
	(ii)  If $T\ge 2$, i.e., $|W_1|\ge 2$, then $G \backslash N_G[w_1]$  has $\sum\limits_{i=2}^{k} |W_i|$ connected components, which in this case consist of  some cliques and isolated vertices in this case.
	Thus $\depth(S/(J+K))=\sum\limits_{i=2}^{k}|W_i|=|W|-T$ by  Lemma \ref{complete}. Meanwhile, $G\backslash w_1$ is the disjoint union $K_{a_{1,1}-1}\sqcup F_{k-1}^{W\backslash \{w_1\}}(K_{n-1})$, so we have $K=(x_{w_1})+I_{K_{a_{1,1}-1}}+I_{G\backslash V(K_{a_{1,1}})}$.
	By induction, Lemma \ref{sum}(\ref{sum-2}) and Lemma \ref{complete},  we obtain
	\begin{align*}
		\depth(S/K)=&1+[1+(T-1)+\sum\limits_{i=2}^{k}|W_i| -\max\{T-1, |W_2|,\ldots,|W_k|\}]\\
		=&1+\sum\limits_{i=1}^{k}|W_i|-\max\{T-1, |W_2|,\ldots,|W_k|\}\\
		\geq&1+|W|-T.
	\end{align*}
	Again applying Lemma \ref{exact}(\ref{exact-2}) and Lemma \ref{decomposition}(\ref{decomposition-2}) to the  exact sequence (\ref{eqn:SES-1}),  we can obtain  the desired result.
\end{proof}

\begin{Lemma}{\em (\cite[Lemma 3.5]{BBH})}
	\label{lem:induced-reg} Let $G$ be a simple graph and $H$ be its induced subgraph. Then $\reg(I_H)\le \reg(I_G)$.
\end{Lemma}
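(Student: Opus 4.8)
The plan is to peel off one vertex at a time. If $H$ is an induced subgraph of $G$ and $V(G)\setminus V(H)=\{v_1,\dots ,v_t\}$, set $G_0:=G$ and $G_j:=G_{j-1}\setminus v_j$, so that $G_t=H$ and each $G_j$ is an induced subgraph of $G_{j-1}$ obtained by deleting a single vertex; hence it suffices to prove $\reg(I_{G\setminus v})\le \reg(I_G)$ for an arbitrary vertex $v$ of $G$, after which the general inequality follows by composing these bounds along the chain $G=G_0,G_1,\dots ,G_t=H$. If $G\setminus v$ has no edges then $\reg(I_{G\setminus v})$ is trivially at most $\reg(I_G)$; otherwise $I_{G\setminus v}\neq 0$ and $I_G\neq 0$, and by Lemma \ref{quotient} the claim is equivalent to $\reg(S_{G\setminus v}/I_{G\setminus v})\le \reg(S_G/I_G)$.

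Write $S:=S_G$ and work with the short exact sequence
\[
0\longrightarrow \bigl(S/(I_G:x_v)\bigr)(-1)\xrightarrow{\;\cdot x_v\;} S/I_G\longrightarrow S/(I_G+(x_v))\longrightarrow 0 .
\]
Every minimal generator of $I_G$ divisible by $x_v$ lies in $(x_v)$ and the remaining ones are exactly the generators of $I_{G\setminus v}$, so $I_G+(x_v)=I_{G\setminus v}+(x_v)$, whence $S/(I_G+(x_v))\cong S_{G\setminus v}/I_{G\setminus v}$ with the same regularity. Moreover $I_G:x_v=(x_u\mid u\in N_G(v))+I_{G\setminus N_G[v]}$ (this is the ideal $J$ of Lemma \ref{decomposition}), and since the variables $x_u$, $u\in N_G(v)$, do not occur in $I_{G\setminus N_G[v]}$ one gets $\reg(S/(I_G:x_v))=\reg(S_{G\setminus N_G[v]}/I_{G\setminus N_G[v]})$. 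Feeding the long exact sequence in $\Tor_i(-,\KK)$ attached to the displayed sequence through the standard regularity estimate for a short exact sequence (of the type in Lemma \ref{exact}(\ref{exact-4})) gives
\[
\reg\bigl(S/(I_G+(x_v))\bigr)\ \le\ \max\bigl\{\reg(S/I_G),\ \reg(S/(I_G:x_v))\bigr\},
\]
so the whole statement reduces to the inequality $\reg(S/(I_G:x_v))\le \reg(S/I_G)$, i.e.\ to the assertion that taking the colon of a monomial ideal by a variable does not raise the regularity of the quotient ring.

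This last point is the only non-formal step, and it is where I expect the real difficulty: the two inequalities one can read off from the long exact sequence — the displayed one together with $\reg(S/(I_G:x_v))\le\max\{\reg(S/I_G)-1,\ \reg(S/(I_G+(x_v)))\}$ — do not close the loop, so one must use that $x_v$ is a \emph{variable} and not merely an arbitrary element. The cleanest way to supply this is via Hochster's formula: $I_G$ is the Stanley--Reisner ideal of the independence complex of $G$, and since $H$ is an induced subgraph, the independence complex of $H$ is exactly the restriction of that of $G$ to $V(H)$; hence the multigraded Betti numbers $\beta_{i,\sigma}(S_H/I_H)$ with $\sigma\subseteq V(H)$ coincide with $\beta_{i,\sigma}(S_G/I_G)$, and since every nonzero Betti number of $S_H/I_H$ occurs in such a degree, $\reg(S_H/I_H)\le\reg(S_G/I_G)$ — which in fact proves the lemma outright, bypassing the reduction above. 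Either route works; the essential content, established along these lines in \cite[Lemma 3.5]{BBH}, is the monotonicity of multigraded Betti numbers under passing to induced subgraphs.
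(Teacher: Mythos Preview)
The paper does not give its own proof of this lemma; it is quoted with a citation to \cite[Lemma 3.5]{BBH} and used as a black box. So there is nothing in the paper to compare against directly.

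Your Hochster's-formula argument in the final paragraph is correct and is the standard route: edge ideals are squarefree, the independence complex of an induced subgraph $H$ is $\Ind(G)|_{V(H)}$, and Hochster's formula then shows that every nonzero multigraded Betti number $\beta_{i,\sigma}(S_H/I_H)$ (with $\sigma\subseteq V(H)$) equals $\beta_{i,\sigma}(S_G/I_G)$. That immediately yields $\reg(S_H/I_H)\le\reg(S_G/I_G)$.

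The first two thirds of your write-up, however, contribute nothing to the proof. You yourself observe that the two short-exact-sequence inequalities do not close the loop, and indeed the reduction to ``$\reg(S/(I_G:x_v))\le\reg(S/I_G)$'' is just another instance of the statement you are trying to prove (for the induced subgraph $G\setminus N_G[v]$), so no induction on $|V(G)|$ rescues it either: applying the inductive hypothesis inside $G\setminus v$ only gives $\reg(S/(I_G:x_v))\le\reg(S_{G\setminus v}/I_{G\setminus v})$, which feeds back a tautology. Since the Hochster argument is self-contained and short, I would delete the exact-sequence discussion entirely rather than present an approach that is explicitly abandoned midway.
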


\begin{Theorem}
	\label{thm:fan-reg}
Let $G=F_k^W(K_n)$ be a $k$-fan graph of the complete graph $K_n$ on the set $W\subseteq [n]$ with $n\ge 2$  and $W=W_1\sqcup\cdots\sqcup W_k$ be a partition of $W$.  Suppose $p=|\{h_{i,j}: h_{i,j}\geq2\}|$. If $|W|\geq n-1$ and  $h_{i,r_i}\ge 2$ for each $i\in [k]$,  then $\reg(S/I_G)=p$. Otherwise, we have $\reg(S/I_G)=p+1$.
\end{Theorem}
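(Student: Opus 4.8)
The plan is to exploit that $F_k^W(K_n)$ is chordal (this is \cite{BMS}; it can also be seen directly since $G$ is obtained by gluing complete graphs along complete subgraphs), so that Lemma~\ref{chordal} gives $\reg(S/I_G)=\vartheta(G)$, the induced matching number. Thus the statement reduces to the purely combinatorial claim
\[
\vartheta(G)=\begin{cases} p & \text{if } |W|\ge n-1 \text{ and } h_{i,r_i}\ge 2 \text{ for all } i\in[k],\\ p+1 & \text{otherwise.}\end{cases}
\]

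First I would fix notation for the structure of $G$. Write $[n]=A\sqcup W$ with $A=[n]\setminus W$, and for each pair $(i,j)$ with $i\in[k]$, $j\in[r_i]$ put $N_{i,j}:=V(K_{a_{i,j}})\setminus V(K_n)$, the set of the $h_{i,j}$ vertices that $K_{a_{i,j}}$ contributes outside $K_n$; these sets are pairwise disjoint and disjoint from $[n]$, and the maximal cliques of $G$ are $K_n$ together with the $K_{a_{i,j}}$. Two facts drive everything: (a) if $v\in N_{i,j}$ then $N_G[v]\subseteq V(K_{a_{i,j}})=N_{i,j}\cup\{w_{i,1},\dots,w_{i,j}\}$; and (b) the hinge $w_{i,j}$ is adjacent to $N_{i,j'}$ only when $j'\ge j$, while all hinges lie in $[n]$ and are hence pairwise adjacent. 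For the lower bound I would first pick, for each $(i,j)$ with $h_{i,j}\ge 2$, an edge inside $N_{i,j}$; by (a) these $p$ edges form an induced matching, so $\vartheta(G)\ge p$ always. In the ``otherwise'' case I would adjoin one more edge: if $|W|\le n-2$, take an edge $e_0$ inside $A$ (possible since $|A|\ge 2$), which by (a) is non-adjacent to every $N_{i,j}$; if instead $h_{i_0,r_{i_0}}=1$ for some $i_0$, let $a\in[r_{i_0}]$ be least with $h_{i_0,j}=1$ for all $a\le j\le r_{i_0}$ (this exists as $h_{i_0,r_{i_0}}=1$), let $x$ be the single vertex of $N_{i_0,a}$, and adjoin $e^*=\{w_{i_0,a},x\}$. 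Using (a), (b) one checks $e^*$ is non-adjacent to every previously chosen edge — the point is that each $(i_0,j)$ with $h_{i_0,j}\ge 2$ has $j<a$ — so $\vartheta(G)\ge p+1$.

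For the upper bound, let $M$ be an induced matching. Each edge of $M$ that either lies in $K_n$ or uses a vertex of $W$ contains a vertex of $[n]$; since $[n]$ spans a clique, two such edges are adjacent or share a vertex, so $M$ contains at most one of them — call it $e^*$ if it exists. Every other edge of $M$ has an endpoint outside $[n]$, hence in some $N_{i,j}$, and using no hinge, (a) forces its other endpoint into $N_{i,j}$ as well; as $N_{i,j}$ is a clique, $M$ has at most one edge inside it, and only if $h_{i,j}\ge 2$. Hence $|M\setminus\{e^*\}|\le p$ and $\vartheta(G)\le p+1$. Finally, assume $|W|\ge n-1$ and $h_{i,r_i}\ge 2$ for all $i$, and suppose toward a contradiction that $|M|=p+1$. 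Then $e^*$ exists and $M\setminus\{e^*\}$ contains an edge $f_i\subseteq N_{i,r_i}$ for every $i$. If $e^*\subseteq[n]$, then by (a), non-adjacency of $e^*$ and $f_i$ forces $e^*\cap W_i=\emptyset$ for all $i$, so $e^*\subseteq A$, contradicting $|A|\le 1$. If instead $e^*$ uses a hinge $w_{i_0,a}$ with $a\in[r_{i_0}]$ (and an endpoint outside $[n]$), then by (b) $w_{i_0,a}$ is adjacent to $N_{i_0,r_{i_0}}$, so $e^*$ and $f_{i_0}$ are adjacent — a contradiction. Hence $|M|\le p$, and together with the construction above this gives $\vartheta(G)=p$.

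The main obstacle I anticipate is the bookkeeping around the exceptional edge $e^*$: proving cleanly that $M$ has at most one edge touching $K_n$ or a hinge, and, in the ``otherwise'' construction, choosing the index $a$ so that $e^*=\{w_{i_0,a},x\}$ interacts with no interior edge of the same fan — which is precisely where the hypothesis $h_{i,r_i}\ge 2$ enters the dichotomy. Everything else is a routine consequence of the closed-neighbourhood facts (a) and (b), together with the reduction via chordality and Lemma~\ref{chordal}.
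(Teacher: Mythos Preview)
Your argument is correct and follows the same route as the paper: reduce to $\vartheta(G)$ via Lemma~\ref{chordal}, exhibit explicit induced matchings for the lower bounds, and for the upper bounds analyze how edges of an induced matching can meet $[n]$ versus the sets $N_{i,j}$. Your upper-bound argument in the $\vartheta(G)=p$ case is in fact tighter than the paper's (which asserts, somewhat loosely, that a maximum induced matching must contain an edge from every clique of $G\setminus W$), while for the lower bound when some $h_{i_0,r_{i_0}}=1$ the paper more simply takes the extra edge at $w_{i_0,r_{i_0}}$ rather than at your minimal index $a$---both choices work for the same reason.
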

\begin{proof}Since  $G$ is a chordal graph, we get  $\reg(S/I_{G})=\vartheta(G)$ by Lemma \ref{chordal}. 	Let $M=\{e_1,\ldots,e_{\vartheta(G)}\}$ be an induced matching  of $G$. It suffices to  show that $\vartheta(G)=p$ if $|W|\geq n-1$ and  $h_{i,r_i}\ge 2$ for each $i\in [k]$. Otherwise, $\vartheta(G)=p+1$. We consider the 	following three cases:
	
(1) If $|W|\geq n-1$ and  $h_{i,r_i}\ge 2$ for each $i\in [k]$, then $|V(K_n\backslash W)|\le 1$  and  $G\backslash W$ is  in this case the disjoint union of  $p$ cliques and some isolated vertices in this case.	 Let $K_{b_1},\dots, K_{b_p}$ be all the cliques in $G\backslash W$.	Since  the induced matching $M$ is  maximal, thus, for each $K_{b_i}$, there exists some $e\in M$ such that $e\in E(K_{b_i})$. Say $e_i\in E(K_{b_i})$ for $i=1,\ldots, p$.
	If $\vartheta(G)\ge p+1$, we set $e_{p+1}=\{u,v\}$, then $u,v\notin \cup_{i=1}^{p} V(K_{b_i})$ and $u,v$ cannot simultaneously belong to $V(K_n)$. Otherwise,  $u$ (or $v$)  and the two endpoints of some $e_i$  form a path with three vertices, which is contradictory.
	It follows that $u,v$ belong to the set of isolated vertices in $V(G\backslash W)$, which contradicts with $e_{p+1}=\{u,v\}\in E(G)$.
	
	(2)   If  $|W|\geq n-1$ and  $h_{i,r_i}=1$ for some $i\in [k]$, let $V(K_n)\cap V(K_{a_{i,r_i}})=\{w_{i,1},\ldots,w_{i,r_i}\}$, then $G\backslash (V(K_n)\backslash w_{i,r_i})$ is the disjoint union of $(p+1)$  cliques  and some isolated vertices.

(3) If $|W|\le n-2$, then  $|V(K_n\backslash W)|\ge 2$. In this case, $G\backslash W$ is the disjoint union of $(p+1)$  cliques  and some isolated vertices.

For cases (2) and (3), by arguments similar to (1), we obtain that $\vartheta(G)=p+1$ in these two cases, as claimed.
\end{proof}

\section{Results involving $*$ operations and $\circ$ operations}
In this section,  we will study some simple graphs obtained from the fan graphs  by using the  $*$ operation or the  $\circ$ operation. The main task of this section is
to provide some formulas for the depth and regularity of some graphs. We start by recalling from \cite{BMS} the two special gluing operations mentioned earlier.

\begin{Definition}
    \label{circ_*_operations}
    For $i= 1,2$, let $G_i$ be a graph with at least one leaf $f_i$ and $v_i$ be its neighbor with $\deg_{G_i}(v_i)\ge 2$.
     \begin{enumerate}
         \item Let $G$ be the graph obtained from $G_1$ and $G_2$ by
            identifying the vertices $f_1$ and $f_2$. In this case, we say that $G$
            is obtained from $G_1$ and $G_2$ by the \emph{$*$ operation} and
            write $G=(G_1,f_1)*(G_2,f_2)$ or simply $G= G_1 * G_2$. If we
            denote the identified vertex in $G$ by $f$, then we also write $G= G_1
            *_f G_2$.
            \item Let $H$ be the graph obtained from $G_1$ and $G_2$ by  first removing
            the leaves $f_1, f_2$, and then identifying the vertices
            $v_1$ and $v_2$.  In this case, we say that $H$ is obtained from $G_1$
            and $G_2$ by the \emph{$\circ$ operation} and write $H=(G_1,f_1)
            \circ (G_2,f_2)$ or simply $H=G_1 \circ G_2$. If $v_1$ and $v_2$
            are identified as the vertex $v$ in $G$, then we also write $H=
            G_1\circ_v G_2$.
    \end{enumerate}
\end{Definition}

\begin{Remark}
 From the above definition we can see that $G_1\sqcup G_2$ is a splitting graph of $G$ and $(G_1\backslash f_1)\sqcup (G_2\backslash f_2)$ is a splitting graph of $H$.
\end{Remark}

\begin{Example} \label{example2}
The following are examples of the graphs  obtained  from $G_1$ and $G_2$ by $\circ$ and  $*$ operations.
\begin{center}
		\begin{tikzpicture}[thick,>=stealth]
			\setlength{\unitlength}{1mm}	
			\thicklines
\put(-80,20){\circle*{1.5}}
\put(-73,30){\circle*{1.5}}
\put(-66,20){\circle*{1.5}}
\put(-66,10){\circle*{1.5}}
\draw[fill = gray] (-8,1) circle(0.06);
\put(-87,27){\circle*{1.5}}
\put(-85,32){\circle*{1.5}}
\put(-60,30){\circle*{1.5}}
\put(-55,15){\circle*{1.5}}

\draw[ultra thick](-8,2)--(-7.3,3);
 \draw[gray](-8,2)--(-6.6,2);
 \draw[gray](-8,2)--(-6.6,1);
 \draw[gray](-8,2)--(-8,1);
\draw[ultra thick](-8,2)--(-8.7,2.7);
\draw[ultra thick](-8,2)--(-8.5,3.2);

 \draw[gray](-7.3,3)--(-6.6,2);
 \draw[gray](-7.3,3)--(-6.6,1);
 \draw[gray](-7.3,3)--(-8,1);
\draw[ultra thick](-7.3,3)--(-8,2);
\draw[ultra thick](-7.3,3)--(-8.5,3.2);

\draw[ultra thick](-6.6,2)--(-6.6,1);
 \draw[gray](-6.6,2)--(-8,1);
\draw[ultra thick](-6.6,2)--(-6,3);
\draw[ultra thick](-6.6,2)--(-5.5,1.5);

 \draw[gray](-6.6,1)--(-8,1);	
\draw[ultra thick](-6.6,1)--(-5.5,1.5);	
\put(-74,3){$G_1$};
\put(-64,20){$v_1$};
\put(-62,32){$f_1$};

\put(-35,20){\circle*{1.5}}
\put(-20,20){\circle*{1.5}}
\put(-20,10){\circle*{1.5}}
\draw[fill = gray] (-3.5,1) circle(0.06);
\put(-40,25){\circle*{1.5}}
\put(-13,25){\circle*{1.5}}
\put(-10,15){\circle*{1.5}}

 \draw[gray](-3.5,2)--(-2,2);
 \draw[gray](-3.5,2)--(-2,1);
 \draw[gray](-3.5,2)--(-3.5,1);
\draw[ultra thick](-3.5,2)--(-4,2.5);

\draw[ultra thick](-2,2)--(-2,1);
 \draw[gray](-2,2)--(-3.5,1);
\draw[ultra thick](-2,2)--(-1.3,2.5);
\draw[ultra thick](-2,2)--(-1,1.5);

\draw[ultra thick](-2,1)--(-1,1.5);
 \draw[gray](-2,1)--(-3.5,1);
\put(-28,3){$G_2$};
\put(-36,22){$v_2$};
\put(-42,27){$f_2$};
\put(0,15){\vector(1,0){11}}

\put(20,20){\circle*{1.5}}
\put(27,30){\circle*{1.5}}
\put(34,20){\circle*{1.5}}
\put(34,10){\circle*{1.5}}
\draw[fill = gray] (2,1) circle(0.06);
\put(13,27){\circle*{1.5}}
\put(15,32){\circle*{1.5}}
\put(45,15){\circle*{1.5}}
\put(44,35){\circle*{1.5}}
\draw[fill = gray] (4.4,2) circle(0.06);
\put(34,35){\circle*{1.5}}
\put(27,43){\circle*{1.5}}
\put(39,43){\circle*{1.5}}

\draw[ultra thick](2,2)--(2.7,3);
 \draw[gray](2,2)--(3.4,2);
 \draw[gray](2,2)--(3.4,1);
 \draw[gray](2,2)--(2,1);
\draw[ultra thick](2,2)--(1.3,2.7);
\draw[ultra thick](2,2)--(1.5,3.2);

 \draw[gray](2.7,3)--(3.4,2);
 \draw[gray](2.7,3)--(3.4,1);
 \draw[gray](2.7,3)--(2,1);
\draw[ultra thick](2.7,3)--(2,2);
\draw[ultra thick](2.7,3)--(1.5,3.2);

\draw[ultra thick](3.4,2)--(3.4,1);
 \draw[gray](3.4,2)--(2,1);
\draw[ultra thick](3.4,2)--(4.5,1.5);
 \draw[gray](3.4,2)--(3.4,3.5);
 \draw[gray](3.4,2)--(4.4,3.5);
 \draw[gray](3.4,2)--(4.4,2);

 \draw[gray](3.4,1)--(2,1);	
\draw[ultra thick](3.4,1)--(4.5,1.5);	
\draw[ultra thick](3.4,3.5)--(4.4,3.5);	
 \draw[gray](3.4,3.5)--(4.4,2);
\draw[ultra thick](3.4,3.5)--(2.7,4.3);
\draw[ultra thick](3.4,3.5)--(3.9,4.3);

 \draw[gray](4.4,3.5)--(4.4,2);
\draw[ultra thick](4.4,3.5)--(3.9,4.3);
\put(27,3){$G_1 \circ G_2$};
\put(36,21){$v$};
\put(-35,-5){$Figure \;2$\hspace{0.3cm} $\circ$\hspace{0.1cm} $operation$};
\end{tikzpicture}
	\end{center}

\vspace{1.5cm}
	\begin{center}
		\begin{tikzpicture}[thick,>=stealth]
			\setlength{\unitlength}{1mm}
				\thicklines
	
\put(-80,20){\circle*{1.5}}
\put(-73,30){\circle*{1.5}}
\put(-66,20){\circle*{1.5}}
\put(-66,10){\circle*{1.5}}
\draw[fill = gray] (-8,1) circle(0.06);
\put(-87,27){\circle*{1.5}}
\put(-85,32){\circle*{1.5}}
\put(-60,30){\circle*{1.5}}
\put(-55,15){\circle*{1.5}}

\draw[ultra thick](-8,2)--(-7.3,3);
 \draw[gray](-8,2)--(-6.6,2);
 \draw[gray](-8,2)--(-6.6,1);
 \draw[gray](-8,2)--(-8,1);
\draw[ultra thick](-8,2)--(-8.7,2.7);
\draw[ultra thick](-8,2)--(-8.5,3.2);

 \draw[gray](-7.3,3)--(-6.6,2);
 \draw[gray](-7.3,3)--(-6.6,1);
 \draw[gray](-7.3,3)--(-8,1);
\draw[ultra thick](-7.3,3)--(-8,2);
\draw[ultra thick](-7.3,3)--(-8.5,3.2);

\draw[ultra thick](-6.6,2)--(-6.6,1);
 \draw[gray](-6.6,2)--(-8,1);
\draw[ultra thick](-6.6,2)--(-6,3);
\draw[ultra thick](-6.6,2)--(-5.5,1.5);

 \draw[gray](-6.6,1)--(-8,1);	
\draw[ultra thick](-6.6,1)--(-5.5,1.5);	
\put(-74,3){$G_1$};
\put(-64,20){$v_1$};
\put(-62,32){$f_1$};

\put(-40,20){\circle*{1.5}}
\put(-25,20){\circle*{1.5}}
\put(-25,10){\circle*{1.5}}
\draw[fill = gray] (-4,1) circle(0.06);
\put(-45,25){\circle*{1.5}}
\put(-18,25){\circle*{1.5}}
\put(-15,15){\circle*{1.5}}

 \draw[gray](-4,2)--(-2.5,2);
 \draw[gray](-4,2)--(-2.5,1);
 \draw[gray](-4,2)--(-4,1);
\draw[ultra thick](-4,2)--(-4.5,2.5);

\draw[ultra thick](-2.5,2)--(-2.5,1);
 \draw[gray](-2.5,2)--(-4,1);
\draw[ultra thick](-2.5,2)--(-1.8,2.5);
\draw[ultra thick](-2.5,2)--(-1.5,1.5);

\draw[ultra thick](-2.5,1)--(-1.5,1.5);
 \draw[gray](-2.5,1)--(-4,1);
\put(-28,3){$G_2$};
\put(-41,22){$v_2$};
\put(-47,27){$f_2$};
\put(-10,15){\vector(1,0){11}}

\put(10,20){\circle*{1.5}}
\put(17,30){\circle*{1.5}}
\put(24,20){\circle*{1.5}}
\put(24,10){\circle*{1.5}}
\draw[fill = gray] (1,1) circle(0.06);
\put(3,27){\circle*{1.5}}
\put(5,32){\circle*{1.5}}
\put(35,15){\circle*{1.5}}
\put(35,20){\circle*{1.5}}
\put(44,20){\circle*{1.5}}

\put(44,35){\circle*{1.5}}
\put(44,20){\circle*{1.5}}
\draw[fill = gray] (5.4,2) circle(0.06);
\put(54,35){\circle*{1.5}}
\put(49,43){\circle*{1.5}}
\put(37,43){\circle*{1.5}}

\draw[ultra thick](1,2)--(1.7,3);
 \draw[gray](1,2)--(2.4,2);
 \draw[gray](1,2)--(2.4,1);
 \draw[gray](1,2)--(1,1);
\draw[ultra thick](1,2)--(0.3,2.7);
\draw[ultra thick](1,2)--(0.5,3.2);

 \draw[gray](1.7,3)--(2.4,2);
 \draw[gray](1.7,3)--(2.4,1);
 \draw[gray](1.7,3)--(1,1);
\draw[ultra thick](1.7,3)--(1,2);
\draw[ultra thick](1.7,3)--(0.5,3.2);

\draw[ultra thick](2.4,2)--(2.4,1);
 \draw[gray](2.4,2)--(1,1);
\draw[ultra thick](2.4,2)--(3.5,1.5);

\draw[ultra thick](2.4,2)--(3.4,2);
\draw[ultra thick](2.4,2)--(3.5,2);

\draw[ultra thick](3.5,2)--(4.4,2);

 \draw[gray](2.4,1)--(1,1);	
\draw[ultra thick](2.4,1)--(3.5,1.5);	

\draw[ultra thick](4.4,3.5)--(5.4,3.5);	
 \draw[gray](4.4,3.5)--(5.4,2);
\draw[ultra thick](4.4,3.5)--(3.7,4.3);
\draw[ultra thick](4.4,3.5)--(4.9,4.3);
 \draw[gray](4.4,3.5)--(4.4,2);

 \draw[gray](5.4,3.5)--(5.4,2);
\draw[ultra thick](5.4,3.5)--(4.9,4.3);
 \draw[gray](5.4,3.5)--(4.4,2);

 \draw[gray](5.4,2)--(4.4,2);
\put(27,3){$G_1 * G_2$};

\put(24,22){$v_1$};
\put(34,22){$f$};
\put(42,17){$v_2$};
\put(-35,-5){$Figure \;3$\hspace{0.3cm} $*$\hspace{0.1cm} $operation$};

		\end{tikzpicture}
	\end{center}
\end{Example}

\vspace{0.8cm}
\subsection{Glue via the $\circ$ operation}
First of all, we compute   the depth and regularity of  the quotient ring  of the edge ideal of a graph, which is obtained  by a single $\circ$ operation.

\begin{Lemma}\label{lower bound}
		Suppose that $G=(G_1,f_1)\circ (G_2,f_2)$,  where $G_i=F_{k_i}^{W_i}(K_{n_i})$ is a $k_i$-fan graph of $K_{n_i}$ on the set $W_i$ with $n_i\ge 2$ and $k_i\ge 1$ for $i\in[2]$. Then we have
\[
\depth(S_G/I_{G})\geq \depth(S_{G_1}/I_{G_1})+\depth(S_{G_2}/I_{G_2})-2.
\]
	\end{Lemma}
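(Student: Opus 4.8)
The plan is to set $v$ to be the vertex of $G$ obtained by identifying the neighbors $v_1$ and $v_2$ of the removed leaves, and to run the standard short exact sequence of Lemma~\ref{decomposition} at $v$. Concretely, put $J=(x_{N_G(v)})+I_{G\backslash N_G[v]}$ and $K=(x_v)+I_{G\backslash v}$, so that $I_G=J\cap K$, there is an exact sequence
\[
0\longrightarrow \frac{S_G}{I_G}\longrightarrow \frac{S_G}{J}\oplus\frac{S_G}{K}\longrightarrow \frac{S_G}{J+K}\longrightarrow 0,
\]
and $\depth(S_G/J)=\depth(S_G/(J+K))+1$ by Lemma~\ref{decomposition}(\ref{decomposition-3}). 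Applying Lemma~\ref{exact}(\ref{exact-2}) gives
$\depth(S_G/I_G)\ge\min\{\depth(S_G/J),\ \depth(S_G/K)+1\}$, so it suffices to bound $\depth(S_G/J)$ and $\depth(S_G/K)$ below by $\depth(S_{G_1}/I_{G_1})+\depth(S_{G_2}/I_{G_2})-2$.

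The second part is routine: $G\backslash v$ is the disjoint union of $G_1\backslash N_{G_1}[v_1]$-type pieces — more precisely $G\backslash v=(G_1\backslash v_1)\sqcup(G_2\backslash v_2)$ after deleting the leaves, but since $f_i$ was already removed, $G_i\backslash v_i$ is itself (a disjoint union involving) a smaller fan graph together with cliques coming from the branch containing $v_i$. Using Lemma~\ref{sum}(\ref{sum-2}) to split the tensor factors, Lemma~\ref{complete} for the clique pieces, and Theorem~\ref{depth of F_k} to evaluate the fan-graph pieces, one checks $\depth(S_G/K)\ge \depth(S_{G_1}/I_{G_1})+\depth(S_{G_2}/I_{G_2})-2$; here the ``$-1$'' absorbed into deleting $v_i$ on each side is exactly what produces the two $-1$'s, and the extra $+1$ in Lemma~\ref{exact}(\ref{exact-2}) is available. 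For $J$, note $N_G(v)=N_{G_1}(v_1)\cup N_{G_2}(v_2)$ (overlapping only in $v$ itself) and $G\backslash N_G[v]=(G_1\backslash N_{G_1}[v_1])\sqcup(G_2\backslash N_{G_2}[v_2])$, so again $S_G/J$ decomposes as a tensor product after modding out the linear forms $x_{N_G(v)}$, and one estimates $\depth(S_G/(J+K))$ componentwise by Lemma~\ref{complete} and Theorem~\ref{depth of F_k}, then adds $1$.

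The main obstacle is bookkeeping the fan-graph combinatorics: one must track how deleting $v_i$ (respectively $N_{G_i}[v_i]$) from $F_{k_i}^{W_i}(K_{n_i})$ affects the partition $W_i=W_{i,1}\sqcup\cdots\sqcup W_{i,k_i}$ and the quantity $\max_j|W_{i,j}|$ appearing in Theorem~\ref{depth of F_k}, since $v_i$ may or may not lie in $W_i$ and may be a vertex of $K_{n_i}$ or of one of the attached cliques. The inequality, rather than an equality, is what makes this tractable: it is enough to verify in each configuration that the relevant component-depths sum to \emph{at least} $\depth(S_{G_1}/I_{G_1})+\depth(S_{G_2}/I_{G_2})-2$, so one only needs the crude monotonicity ``removing a vertex drops the fan-depth formula by at most $1$'' together with the slack afforded by the $+1$ in Lemma~\ref{exact}(\ref{exact-2}). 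I would organize the argument by first recording this one-vertex-deletion estimate for fan graphs as a sub-claim (a quick consequence of Theorem~\ref{depth of F_k}), and then feeding it into both the $J$ and $K$ branches uniformly.
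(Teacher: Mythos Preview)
Your approach is the paper's: decompose at the identified vertex $v$, set $J=(x_{N_G(v)})+I_{G\backslash N_G[v]}$ and $K=(x_v)+I_{G\backslash v}$, and bound each side via Theorem~\ref{depth of F_k} together with the obvious inequality $T'_i\le T_i$.

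Two points to clean up. First, the inequality you quote from Lemma~\ref{exact}(\ref{exact-2}) is misstated: the $+1$ attaches to $\depth(S_G/(J+K))$, not to $\depth(S_G/K)$. After combining with $\depth(S_G/J)=\depth(S_G/(J+K))+1$ one gets
\[
\depth(S_G/I_G)\ \ge\ \min\{\depth(S_G/J),\ \depth(S_G/K)\},
\]
with no spare $+1$ on the $K$ side; your remark that ``the extra $+1$ is available'' is incorrect. This does not damage the argument, since you still bound both $\depth(S_G/J)$ and $\depth(S_G/K)$ by the target $\depth(S_{G_1}/I_{G_1})+\depth(S_{G_2}/I_{G_2})-2$, which is exactly what is needed.

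Second, the case analysis you anticipate for the position of $v_i$ is unnecessary. Since $f_i$ is a leaf of a fan graph, its neighbor $v_i$ is forced to lie in $V(K_{n_i})\cap W_i$ (the paper simply takes $v_i\in W_{i,1}$). Consequently $G\backslash N_G[v]$ is a disjoint union of $\sum_{i=1}^{2}(|W_i|-|W_{i,1}|)$ cliques and isolated vertices, so only Lemma~\ref{complete} is needed for the $J$-branch; and each $G_i\backslash\{v_i,f_i\}$ is itself a single fan graph on $K_{n_i-1}$, so Theorem~\ref{depth of F_k} applies directly to compute $\depth(S_G/K)$ without splitting into sub-configurations.
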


\begin{proof}
	Let $v_i$ be the neighbor point of $f_i$ in $G_i$ and $W_i=W_{i,1}\sqcup\cdots\sqcup W_{i,k_i}$ be a partition of $W_i$ with $v_i\in W_{i,1}$ for $i\in[2]$. Suppose that $v_1$ and $v_2$ are identified with $v$ in $G$ by the $\circ$ operation. Let $J=(x_{N_G(v)})+I_{G \setminus N_G[v]}$ and $K=(x_{v})+I_{G\backslash v}$, then
$G\backslash N_G[v]$  has $\sum\limits_{i=1}^{2}(|W_i|-|W_{i,1}|)$ connected components consisting of cliques and isolated vertices. Let $T_i=\max\{|W_{i,1}|, |W_{i,2}|,\ldots,|W_{i,k_i}|\}$ for $i\in[2]$, then  $|W_{i,1}|\le T_i$.
Thus $\depth(S_G/J)=1+\sum\limits_{i=1}^{2}(|W_i|-|W_{i,1}|)\ge 1+\sum\limits_{i=1}^{2}(|W_i|-T_i)$ by and Lemma \ref{complete}.
Meanwhile, $G\backslash v=(G_1\backslash \{v,f_1\})\sqcup (G_2\backslash \{v,f_2\})$ and every $G_i\backslash \{v,f_i\}$  is a fan graph, so  we have $K=(x_{v})+I_{G_1\backslash \{v,f_1\}}+I_{G_2\backslash \{v,f_2\}}$. Then, by Theorem \ref{depth of F_k}, we get
\begin{align*}
	\depth(S_G/K)=&\depth(S_{G_1\backslash \{v,f_1\}}/I_{G_1\backslash \{v,f_1\}})+\depth(S_{G_2\backslash \{v,f_2\}}/I_{G_2\backslash \{v,f_2\}})\\
	=&[1+(|W_{1}|-1)-T'_1]+[1+(|W_{2}|-1)-T'_2]\\
	=&(1+|W_1|-T'_1)+(1+|W_2|-T'_2)-2\\
	\geq&\depth(S_{G_1}/I_{G_1})+\depth(S_{G_2}/I_{G_2})-2.
	\end{align*}	
where $T'_i=\max\{|W_{i,1}|-1, |W_{i,2}|,\ldots,|W_{i,k_i}|\}$ for $i\in[2]$, and the last inequality holds because of every $T'_i\le T_i$ and Theorem
\ref{depth of F_k}.
	Again applying Lemma \ref{exact}(\ref{exact-2}) and Lemma \ref{decomposition} to the  exact sequence (\ref{eqn:SES-1}),  we obtain the desired result.
\end{proof}

Jayanthan et al. in \cite{JKS2} introduced the notion of clique sum. Here, we make a
simple application of it. By a clique sum $G_1\cup_v  G_2$, we mean a union of graphs $G_1$ and $G_2$ such
that $V(G_1)\cap V(G_2)=\{v\}$.
\begin{Lemma}
\label{subgraph}
		Let  $G=F_k^W(K_n)$ be a $k$-fan graph of  $K_n$ on the set $W$ with $n\ge 2$ and $k\ge 1$. Suppose $f$ is a leaf of $G$,  $v$ is its neighbor point, and $W=W_1\sqcup\cdots\sqcup W_k$ is a partition of $W$ with  $v\in W_1$. Let $T=\max\{|W_{1}|, |W_{2}|,\ldots,|W_{k}|\}$, $T'=\max\{|W_{1}|-1, |W_{2}|,\ldots,|W_{k}|\}$ with $T'= T-1$.
 Then $\depth(S_{G\backslash f}/I_{G\backslash f})=\depth(S_G/I_G)$.
	\end{Lemma}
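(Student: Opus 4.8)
The plan is to compute $\depth(S_{G\backslash f}/I_{G\backslash f})$ by a single application of Lemma~\ref{decomposition} and then to check that it agrees with the value of $\depth(S_G/I_G)$ already supplied by Theorem~\ref{depth of F_k}. First I would dispose of $n=2$: in that case the hypothesis $T'=T-1$ forces $k=1$, and a direct computation (using Lemmas~\ref{complete} and \ref{path}) shows that both sides equal $1$; so from now on I assume $n\ge 3$. Then, since $f$ is a leaf, it cannot lie in $K_n$, so it is the unique vertex outside $V(K_n)$ of some branch clique $K_{a_{i,j}}$; having degree $1$ with neighbour $v\in W_1$ forces $i=1$, $j=1$, $a_{1,1}=2$ and $v=w_{1,1}$. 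Hence $G\backslash f$ is $K_n$ together with the truncated branch $\{K_{a_{1,2}},\dots,K_{a_{1,r_1}}\}$ on $W_1$ and the unchanged branches on $W_2,\dots,W_k$ (if $r_1=1$, which under $T'=T-1$ only happens for $k=1$, then $G\backslash f=K_n$ and both sides equal $1$). Deleting $w_{1,1}$ in addition turns $K_n$ into $K_{n-1}$ and shrinks each $K_{a_{1,j}}$ with $j\ge 2$ to $K_{a_{1,j}-1}$, which meets $K_{n-1}$ in the prefix $\{w_{1,2},\dots,w_{1,j}\}$; as $a_{1,j}-1>j-1$ this is again a genuine fan branch, so $G\backslash\{v,f\}$ is the $k$-fan graph of $K_{n-1}$ on $(W_1\backslash\{w_{1,1}\})\sqcup W_2\sqcup\cdots\sqcup W_k$, whose underlying set has cardinality $|W|-1$ and whose largest block has size $\max\{|W_1|-1,|W_2|,\dots,|W_k|\}=T'$; so by Theorem~\ref{depth of F_k}, $\depth(S_{G\backslash\{v,f\}}/I_{G\backslash\{v,f\}})=1+(|W|-1)-T'=|W|-T'$.

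Next I would apply Lemma~\ref{decomposition} to the connected graph $G\backslash f$ at the vertex $v=w_{1,1}$, with $J'=(x_{N_{G\backslash f}(v)})+I_{(G\backslash f)\backslash N_{G\backslash f}[v]}$ and $K'=(x_v)+I_{(G\backslash f)\backslash v}$. Since $v$ lies in $K_n$ and in every $K_{a_{1,j}}$ with $j\ge 2$, its closed neighbourhood in $G\backslash f$ is exactly $V(K_n)$ together with all new vertices of the truncated $W_1$-branch; therefore $(G\backslash f)\backslash N_{G\backslash f}[v]$ is the disjoint union $\bigsqcup_{i=2}^{k}\bigsqcup_{j=1}^{r_i}K_{h_{i,j}}$, consisting of $\sum_{i=2}^{k}r_i=|W|-|W_1|$ cliques (possibly single vertices) on pairwise disjoint vertex sets with no edges between them. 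By Lemmas~\ref{complete} and \ref{sum}, $\depth(S/(J'+K'))=|W|-|W_1|$, hence $\depth(S/J')=|W|-|W_1|+1$ by Lemma~\ref{decomposition}(\ref{decomposition-3}); and since $(G\backslash f)\backslash v=G\backslash\{v,f\}$, the previous paragraph gives $\depth(S/K')=|W|-T'$.

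Finally I would feed $I_{G\backslash f}=J'\cap K'$ into the exact sequence~(\ref{eqn:SES-1}) and invoke Lemma~\ref{exact}(\ref{exact-2}). The hypothesis $T'=T-1$ is equivalent to $|W_1|$ being the strict maximum of $|W_1|,\dots,|W_k|$, so $T'=|W_1|-1$; hence $\depth(S/J'\oplus S/K')=|W|-T'$ and $\depth(S/(J'+K'))=|W|-|W_1|=|W|-T'-1$ differ, and the equality clause of Lemma~\ref{exact}(\ref{exact-2}) gives $\depth(S_{G\backslash f}/I_{G\backslash f})=|W|-T'$. On the other hand, by Theorem~\ref{depth of F_k} and $T=T'+1$ we have $\depth(S_G/I_G)=1+|W|-T=|W|-T'$, so the two quantities coincide.

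The step I expect to be the main obstacle is the combinatorial bookkeeping of the first paragraph — recognizing $G\backslash\{v,f\}$ as an honest $k$-fan graph of $K_{n-1}$ (so that Theorem~\ref{depth of F_k} applies with the correct parameters) and $(G\backslash f)\backslash N_{G\backslash f}[v]$ as a disjoint union of cliques, together with the small-case checks ($n=2$, $r_1=1$); once this is set up, the homological part is routine. One should also note that the naive route, decomposing $G$ itself at the leaf $f$, only produces the inequality $\depth(S_G/I_G)\ge\depth(S_{G\backslash f}/I_{G\backslash f})$, since the relevant depths of $G\backslash\{v,f\}$ and $G\backslash f$ turn out to be equal and the equality clause of Lemma~\ref{exact} does not trigger — which is precisely why Theorem~\ref{depth of F_k} must be invoked separately for $\depth(S_G/I_G)$.
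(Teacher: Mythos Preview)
Your proof is correct and follows essentially the same route as the paper: you decompose $G\backslash f$ at the vertex $v$ via Lemma~\ref{decomposition}, identify $(G\backslash f)\backslash N_{G\backslash f}[v]$ as a disjoint union of $|W|-|W_1|$ cliques, recognize $(G\backslash f)\backslash v$ as a fan graph of $K_{n-1}$ so that Theorem~\ref{depth of F_k} applies, and then conclude with the exact sequence~(\ref{eqn:SES-1}) and Lemma~\ref{exact}(\ref{exact-2}). The only difference is cosmetic: you spell out the boundary cases $n=2$ and $r_1=1$ and the structural identifications explicitly, whereas the paper absorbs these into the general computation.
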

	\begin{proof} Let $G'=G\backslash f$. Choose $J=(x_{N_{G'}(v)})+I_{{G'}\backslash N_{G'}[v]}$ and $K=(x_{v})+I_{{G'}\backslash v}$,  then $J+K=(x_{N_{G'}[v]})+I_{G \backslash N_{G'}[v]}$   by Lemma \ref{decomposition}(\ref{decomposition-1}) and  $G'\backslash N_{G'}[v]$ has $(|W|-|W_1|)$ connected components  consisting of cliques and isolated vertices. Thus $\depth(S_{G'}/(J+K))=|W|-|W_1|=\depth(S_{G}/I_{G})-1$ by  Lemma \ref{complete}. Meanwhile, $G'\backslash v$ is a fan graph of $K_{n-1}$ on the set $W\backslash v$, so $\depth(S_{G'}/K)=1+(|W|-1)-T'=\depth(S_G/I_{G})$. Applying Lemma \ref{exact}(\ref{exact-2}) and Lemma \ref{decomposition} to the  exact sequence (\ref{eqn:SES-1}) by substituting $S_{G'}$ for $S$,  we get the desired result.
	\end{proof}

Now  we are ready to prove the first major result of this subsection.
\begin{Theorem}
	\label{thm:depth_Fan_Fan_circ}
	Suppose that $G=(G_1,f_1)\circ (G_2,f_2)$,  where $G_i=F_{k_i}^{W_i}(K_{n_i})$ is a $k_i$-fan graph of $K_{n_i}$ on the set $W_i$ with $n_i\ge 2$ and $k_i\ge 1$ for $i\in[2]$.
 Let $v_i$ be the neighbor of $f_i$ in $G_i$ and $W_i=W_{i,1}\sqcup\cdots\sqcup W_{i,k_i}$ be a partition of $W_i$ with $v_i\in W_{i,1}$ for $i\in[2]$. Let  $T_i=\max\{|W_{i,1}|, |W_{i,2}|,\ldots,|W_{i,k_i}|\}$ and  $T'_i=\max\{|W_{i,1}|-1, |W_{i,2}|,\ldots,|W_{i,k_i}|\}$ for $i\in[2]$, and $t=|\{i:T'_i\ne T_i-1\}|$. Then
\[
\depth(S_G/I_{G})=\depth(S_{G_1}/I_{G_1})+\depth(S_{G_2}/I_{G_2})-s
\]
where $s=1$ if $t\le 1$, otherwise, $s=2$.
\end{Theorem}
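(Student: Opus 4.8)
The idea is to run the same Mayer--Vietoris style decomposition used in Lemma \ref{lower bound}, but now track exactly when the bound is or is not sharp. As in that proof, put $v_1,v_2$ identified to $v$ in $G$, choose the partitions with $v_i\in W_{i,1}$, and set $J=(x_{N_G(v)})+I_{G\setminus N_G[v]}$, $K=(x_v)+I_{G\setminus v}$, so that $I_G=J\cap K$ by Lemma \ref{decomposition}(\ref{decomposition-2}) and we have the exact sequence (\ref{eqn:SES-1}). Two facts carry over verbatim: first, $G\setminus N_G[v]$ is a disjoint union of cliques and isolated vertices coming from the branches on $W_{1,2},\dots,W_{1,k_1}$ and $W_{2,2},\dots,W_{2,k_2}$, so by Lemma \ref{complete}
\[
\depth(S_G/J)=1+\sum_{i=1}^{2}(|W_i|-|W_{i,1}|);
\]
second, $G\setminus v=(G_1\setminus\{v,f_1\})\sqcup(G_2\setminus\{v,f_2\})$ with each $G_i\setminus\{v,f_i\}$ a fan graph, so by Lemma \ref{sum}(\ref{sum-2}) and Theorem \ref{depth of F_k},
\[
\depth(S_G/K)=\sum_{i=1}^{2}\bigl(1+(|W_i|-1)-T'_i\bigr)=\Bigl(\sum_{i=1}^2(1+|W_i|-T'_i)\Bigr)-2.
\]

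\textbf{Comparing the two depths.} The whole argument now reduces to comparing $\depth(S_G/J)$ and $\depth(S_G/K)$ and reading off the equality condition in Lemma \ref{exact}(\ref{exact-2}). Write $d_i:=\depth(S_{G_i}/I_{G_i})=1+|W_i|-T_i$ by Theorem \ref{depth of F_k}. The key numerical observation is that $T'_i=T_i-1$ precisely when the (unique) maximum among $|W_{i,1}|,\dots,|W_{i,k_i}|$ is attained \emph{only} at $W_{i,1}$ (so removing $v$ strictly drops it), while $T'_i=T_i$ when the maximum is also attained by some other block. Hence $\depth(S_G/K)$, when expanded, equals $d_1+d_2-2$ if both $i$ satisfy $T'_i=T_i-1$ ($t=0$), is $d_1+d_2-1$ if exactly one does ($t=1$), and is $d_1+d_2$ if neither does ($t=2$); meanwhile $\depth(S_G/J)=1+\sum_i(|W_i|-|W_{i,1}|)\ge 1+\sum_i(|W_i|-T_i)=d_1+d_2-1$, with a case analysis needed to pin it down (it equals $d_1+d_2-1$ exactly when $|W_{i,1}|=T_i$ for both $i$, i.e.\ when $t=2$, and is $\ge d_1+d_2$ otherwise). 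Carrying out this bookkeeping: when $t\le 1$ one gets $\depth(S_G/K)\le d_1+d_2-1\le\depth(S_G/J)$ with the two never equal in the relevant range — actually one checks $\depth(S_G/K)<\depth(S_G/J)$ or handles the tie — so by the equality clause of Lemma \ref{exact}(\ref{exact-2}) applied to (\ref{eqn:SES-1}), $\depth(S_G/I_G)=\min\{\depth(S_G/J),\depth(S_G/K)+1\}$; one verifies this minimum is $\depth(S_G/K)+1=d_1+d_2-1$ when $t\le 1$ and $=d_1+d_2-2$ when $t=2$. This is exactly the claimed formula with $s=1$ for $t\le 1$ and $s=2$ for $t=2$.

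\textbf{Where the work is.} The conceptual content is light — it is the same two-module splitting — but the arithmetic of the max-functions $T_i,T'_i$ is fiddly, and the main obstacle is making the equality clause of Lemma \ref{exact}(\ref{exact-2}) apply cleanly: one must ensure $\depth(S_G/J)\ne\depth(S_G/K)$ in each case (or, when they could coincide, argue directly via Lemma \ref{lower bound} for the lower bound and the exact sequence for the upper bound). A clean way to organize this is to split into the three cases $t=0$, $t=1$, $t=2$, and in each case compute $\depth(S_G/J)$ and $\depth(S_G/K)$ explicitly in terms of the $|W_{i,j}|$ using Lemma \ref{subgraph} to identify $\depth(S_{G_i\setminus\{v,f_i\}}/I_{G_i\setminus\{v,f_i\}})$ with the fan-graph formula; the lower bound from Lemma \ref{lower bound} then pins down the answer whenever the exact-sequence inequality is not automatically tight. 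I expect the $t=1$ case to require the most care, since there the two complexes $S_G/J$ and $S_G/K$ have depths differing by exactly the amount that forces $\depth(S_G/I_G)=\depth(S_G/K)+1$, and one must check the strict inequality $\depth(S_G/J)>\depth(S_G/K)+1$ does \emph{not} hold — i.e.\ that the min is genuinely achieved by the $K$-term.
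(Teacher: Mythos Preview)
Your arithmetic is reversed: if $T'_i=T_i-1$ then $|W_i|-T'_i=|W_i|-T_i+1=d_i$, so $\depth(S_G/K)=\sum_i(|W_i|-T'_i)$ equals $d_1+d_2$ when $t=0$ and $d_1+d_2-2$ when $t=2$, not the other way round. Likewise, your claim that $\depth(S_G/J)=d_1+d_2-1$ ``exactly when $t=2$'' is wrong: $|W_{i,1}|=T_i$ holds whenever $T'_i=T_i-1$ (the $t=0$ side), while for $T'_i=T_i$ one only knows $|W_{i,1}|\le T_i$, so $\depth(S_G/J)$ is pinned down at $d_1+d_2-1$ precisely when $t=0$, and is only known to be $\ge d_1+d_2-1$ otherwise.

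Fixing the arithmetic does not rescue the method. With the correct values, the case $t=0$ goes through (since $\depth(S_G/K)=d_1+d_2>\depth(S_G/J)=d_1+d_2-1$ forces equality), but for $t=1$ with $|W_{2,1}|=T_2-1$, and for $t=2$ with $|W_{1,1}|=T_1$ and $|W_{2,1}|=T_2$ (the tied case), one gets $\depth(S_G/K)=\depth(S_G/(J+K))$, so the equality clause of Lemma~\ref{exact}(\ref{exact-2}) fails and you obtain only a lower bound; Lemma~\ref{lower bound} gives no matching upper bound. The paper avoids this by \emph{not} decomposing at $v$ when $t\ge 1$: instead it picks $w_1$ in a block $W_{2,2}$ realizing $T_2$ (with $W_{2,2}\ne W_{2,1}$), so that $G\setminus N_G[w_1]$ contains $G_1\setminus\{v_1,f_1\}$ as a component---this forces $\depth(S_G/(J+K))=d_1+d_2-2$ (case $t=1$) or $d_1+d_2-3$ (case $t=2$) exactly---while $G\setminus w_1=G_1\circ(G_2\setminus w_1)$ has a strictly smaller parameter $t'$, allowing induction on $n_2$ together with the already-settled $t=0$ case. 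That choice of decomposition vertex is the missing idea.
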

\begin{proof}	Let $v=v_1=v_2$. By the definition of $t$, we have $t\in\{0,1,2\}$. We distinguish the following three cases:
	
(I) If $t=0$, then  $T'_i=T_i-1$ for $i\in[2]$. This implies that each $T_i=|W_{i,1}|$ and $|W_{i,1}|>|W_{i,j}|$ for  $j=2,\ldots,k_i$. In this case,  we set $J=(x_{N_G(v)})+I_{G\backslash N_G[v]}$ and $K=(x_{v})+I_{G\backslash v}$,  then $J+K=(x_{N_G[v]})+I_{G \backslash N_G[v]}$ and $G\backslash N_G[v]$  has  $\sum\limits_{i=1}^{2}(|W_i|-|W_{i,1}|)$ connected components consisting of some cliques and isolated vertices.
Thus $\depth(S_G/(J+K))=\sum\limits_{i=1}^{2}(|W_i|-|W_{i,1}|)=\depth(S_{G_1}/I_{G_1})+\depth(S_{G_2}/I_{G_2})-2$ by  Lemma \ref{complete} and Theorem
\ref{depth of F_k}.
Meanwhile, $G\backslash v=(G_1\backslash \{v,f_1\})\sqcup (G_2\backslash \{v,f_2\})$ and every $G_i\backslash \{v,f_i\}$ is a fan graph. So  we have $K=(x_{v})+I_{G_1\backslash \{v,f_1\}}+I_{G_2\backslash \{v,f_2\}}$. By Theorem \ref{depth of F_k}, we get
	\begin{align*}
	\depth(S_G/K)=&\depth(S_{G_1 \backslash \{v,f_1\}}/I_{G_1\backslash \{v,f_1\}})+\depth(S_{G_2\backslash \{v,f_2\}}/I_{G_2\backslash \{v,f_2\}})\\
	=&[1+(|W_{1}|-1)-T'_1]+[1+(|W_{2}|-1)-T'_2]\\
	\end{align*}
\begin{align*}
=&[1+(|W_{1}|-1)-(T_1-1)]+[1+(|W_{2}|-1)-(T_2-1)]\\
=&(1+|W_1|-T_1)+(1+|W_2|-T_2)\\
	=&\depth(S_{G_1}/I_{G_1})+\depth(S_{G_2}/I_{G_2}).
\end{align*}		
By Lemma \ref{exact}(\ref{exact-2}) and Lemma \ref{decomposition}(\ref{decomposition-2}) and the  exact sequence (\ref{eqn:SES-1}),  we get the desired result.

(II) If $t=1$,  we assume that  $T'_1=T_1-1$ and $T'_2\ne T_2-1$. It follows that $T_1=|W_{1,1}|>|W_{1,j}|$ for  $j=2,\ldots,k_1$ and $T_2=|W_{2,j}|\ge |W_{2,1}|$ for some $j\ne 1$. Say $T_2=|W_{2,2}|$. In this case, we prove   by induction on $n_2$ that
\[
\depth(S_G/I_{G})=\depth(S_{G_1}/I_{G_1})+\depth(S_{G_2}/I_{G_2})-1.
\]
If $n_2=2$, then $G_2$ can only be  $G_2=P_3\cup_u  K_b$, which is the  clique sum of $P_3$ and some $K_b$. So we have  $\depth(S_{G_2}/I_{G_2})=2$ by Theorem \ref{depth of F_k}. In this case, let $J=(x_{N_G(u)})+I_{G\backslash N_G[u]}$ and $K=(x_{u})+I_{G\backslash u}$, then $J+K=(x_{N_G[u]})+I_{G \backslash N_G[u]}$,  $G\backslash N_G[u]=G_1\backslash \{v_1,f_1\}$ is a fan graph and
$G\backslash u=(G_1\backslash f_1)\sqcup K_{b-1}$. Thus
 \begin{align*}
\depth(S_G/(J+K))&=\depth(S_{G_1\backslash \{v_1,f_1\}}/I_{G_1\backslash \{v_1,f_1\}})\\
&=1+(|W_1|-1)-T'_1=\depth(S_{G_1}/I_{G_1})
\end{align*}
and
\[
\depth(S_G/K)=1+\depth(S_{G_1\backslash f_1}/I_{G_1\backslash f_1})=1+\depth(S_{G_1}/I_{G_1})
\]
 by Lemma \ref{subgraph}. The  desired results follow  from Lemma \ref{exact}(\ref{exact-2}), Lemma \ref{decomposition}(\ref{decomposition-2}) and the  exact sequence (\ref{eqn:SES-1})  by substituting $S_{G}$ for $S$.

 Now suppose $n_2\ge 3$. In this case, let $W_{2,2}=\{w_{1},\ldots,w_{r_2}\}$ and $\{K_{a_{2,1}},\ldots,K_{a_{2,r_2}}\}$ be a branch of the fan on $W_{2,2}$ such that $V(K_{n_2})\cap V(K_{a_2,j})=\{w_1,\ldots,w_j\}$ for $j\in [r_2]$. Choose  $J=(x_{N_G(w_1)})+I_{G\backslash N_G[w_1]}$ and $K=(x_{w_1})+I_{G\backslash w_1}$, then $J+K=(x_{N_G[w_1]})+I_{G \backslash N_G[w_1]}$,  $G\backslash N_G[w_1]$ is the disjoint union of the graph $G_1\backslash \{v_1,f_1\}$ and  $(|W_2|-|W_{2,2}|-1)$ connected components  consisting of some cliques and isolated vertices.
 Then we obtain
 \begin{align}
 	\depth(S_G/(J+K))&=\depth(S_{G_1\backslash \{v_1,f_1\}}/I_{G_1\setminus \{v_1,f_1\}})+(|W_2|-|W_{2,2}|-1)\notag\\
 	&=[1+(|W_1|-1)-T'_1]+(1+|W_2|-|W_{2,2}|-2)\notag\\
 	&=\depth(S_{G_1}/I_{G_1})+\depth(S_{G_2}/I_{G_2})-2.
 	\label{eqn:SES-2}
 \end{align}
Meanwhile, $G\backslash w_1=G_1\circ (G_2\setminus  w_1)$ and  $G_2\backslash  w_1=F_{q}^{W_2\backslash \{w_1\}}(K_{n_2-1})$ for some $q$. In this case, to compute $\depth(S_{G\backslash w_1}/I_{G\backslash w_1})$, we set $T^*_1=T_1$, $T'^*_1=T'_1$, $T^{*}_2=\max\{|W_{2,1}|, |W_{2,2}|-1,|W_{2,3}|,\ldots,|W_{2,k_2}|\}$ and  $T'^*_2=\max\{|W_{2,1}|-1, |W_{2,2}|-1, |W_{2,3}|,\\ \ldots,|W_{2,k_2}|\}$. Let $t'=|\{i:T'^*_i\ne T^*_i-1\}|$, then $t'\leq 1$. From  the above case (I) and by induction  we have
 \[
\depth(S_G/K)=1+ \depth(S_{G\backslash w_1}/I_{G\backslash w_1})=\depth(S_{G_1}/I_{G_1})+\depth(S_{G_2\backslash w_1}/I_{G_2\backslash w_1}).
 \]
The desired result follows from  Lemma \ref{exact}(\ref{exact-2}), the equality (\ref{eqn:SES-2}),  Lemma \ref{decomposition} and the  exact sequence (\ref{eqn:SES-1}).

(III) If $t=2$, then $T'_i\ne T_i-1$ for $i\in [2]$. This implies that $T_i=|W_{i,j_i}|\ge |W_{i,1}|$ for some $j_i\ne 1$. Suppose $T_i=|W_{i,2}|$  for $i\in [2]$. In this case, let $W_{2,2}=\{w_{1},\ldots,w_{r_2}\}$ and $\{K_{a_{2,1}},\ldots,K_{a_{2,r_2}}\}$ be a branch of the fan on $W_{2,2}$ with $V(K_{n_2})\cap V(K_{a_2,j})=\{w_1,\ldots,w_j\}$ for $j\in [r_2]$. Choose  $J=(x_{N_G(w_1)})+I_{G\backslash N_G[w_1]}$ and $K=(x_{w_1})+I_{G\backslash w_1}$, then $J+K=(x_{N_G[w_1]})+I_{G \backslash N_G[w_1]}$. Similar to the case $t=1$,  $G\backslash N_G[w_1]$ is the disjoint union of a graph $G_1\backslash \{v_1,f_1\}$ and  $(|W_2|-|W_{2,2}|-1)$ connected components  consisting of some cliques and isolated vertices, and $G\backslash  w_1=G_1\circ (G_2\backslash   w_1)$ and $G_2\backslash   w_1=F_{q}^{W_2\backslash \{w_1\}}(K_{n_2-1})$ for some $q$.  So we have
\begin{align}
	\depth(S_G/(J+K))&=\depth(S_{G_1\backslash \{v_1,f_1\}}/I_{G_1\backslash \{v_1,f_1\}})+(|W_2|-|W_{2,2}|-1)\notag\\
	&=[1+(|W_1|-1)-T'_1]+(1+|W_2|-|W_{2,2}|-2)\label{eqn:SES-4}\\
	&=[1+(|W_1|-1)-T_1]+(1+|W_2|-|W_{2,2}|-2)\notag\\
	&=\depth(S_{G_1}/I_{G_1})+\depth(S_{G_2}/I_{G_2})-3.\notag	
\end{align}
and
\begin{align}
	\depth(S/K)=&1+\depth(S_{G\backslash  w_1}/I_{G\backslash  w_1})\notag\\
	\ge &1+\depth(S_{G_1}/I_{G_1})+\depth(S_{G_2\backslash  w_1}/I_{G_2\backslash  w_1})-2\notag\\
 =&\depth(S_{G_1}/I_{G_1})+[1+(|W_2|-1)-T^*_2]-1\notag\\
	\ge &\depth(S_{G_1}/I_{G_1})+(1+|W_2|-T_2)-2\notag\\
	=&\depth(S_{G_1}/I_{G_1})+\depth(S_{G_2}/I_{G_2})-2.
	\label{eqn:SES-5}
\end{align}	
where the first inequality holds by Lemma \ref{lower bound} and the second inequality holds by $T^*_2\le T_2$, where $T^{*}_2=\max\{|W_{2,1}|, |W_{2,2}|-1,|W_{2,3}|,\ldots,|W_{2,k_2}|\}$. By Lemma \ref{exact}(\ref{exact-2}), the relations (\ref{eqn:SES-4}), (\ref{eqn:SES-5}), Lemma \ref{decomposition}(\ref{decomposition-2}) and the  exact sequence (\ref{eqn:SES-1}), we get the wished result.
\end{proof}

\begin{Lemma}\label{clique sum}
	Let  $G_{1}=F_{k}^{W}(K_{n})$ be a $k$-fan graph of $K_{n}$ on the set $W\subsetneq [n]$ with $n\ge 2$ and $P_2$ be a path  with $2$ vertices.
Suppose that $G=G_1\cup_v P_2$ is the clique sum of  graphs $G_1$ and $P_2$ with  $V(G_1)\cap V(P_2)=\{v\}$ and  $v\in V(K_{n})\backslash W$. Let $W=W_1\sqcup\cdots\sqcup W_k$ be a partition of $W$. Then
\[
\reg(S_{G\backslash v}/I_{G\backslash v})=\reg(S_G/I_G)-s
\]
where $s=1$ if $|W|\ge  n-2$ and  $h_{i,r_i} \ge 2$ for all $i\in [k]$, otherwise, $s=0$.
\end{Lemma}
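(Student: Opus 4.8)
The plan is to identify both $G$ and $G\backslash v$ with fan graphs (up to an isolated vertex) and then read off their regularities from Theorem~\ref{thm:fan-reg}; the passage from $G$ to $G\backslash v$ turns out to be essentially the passage from a $(k+1)$-fan of $K_n$ to a $k$-fan of $K_{n-1}$, and the two regularity formulas differ exactly as claimed.

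First I would record that $G$ itself is a fan graph. Since $v\in V(K_n)\backslash W$, gluing $P_2=\{v,z\}$ at $v$ is precisely the operation of adding to $K_n$ a fan on the new singleton $\{v\}$ whose branch is the single clique $K_2$; hence $G=F_{k+1}^{W\cup\{v\}}(K_n)$, with partition $W_1\sqcup\cdots\sqcup W_k\sqcup\{v\}$ and with $h_{k+1,1}=2-1=1$. This new branch contributes the value $1$, so the quantity $p=|\{h_{i,j}\colon h_{i,j}\ge 2\}|$ of Theorem~\ref{thm:fan-reg} is unchanged; on the other hand the first alternative of that theorem fails for $G$, since $h_{k+1,r_{k+1}}=h_{k+1,1}=1<2$. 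Hence $\reg(S_G/I_G)=p+1$, with no further hypotheses.

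Next I would reduce $G\backslash v$ to a fan of $K_{n-1}$. Deleting $v$ removes the only edge of $P_2$, so $G\backslash v=(G_1\backslash v)\sqcup\{z\}$ with $z$ an isolated vertex; adjoining an isolated vertex only enlarges the ambient polynomial ring, so $\reg(S_{G\backslash v}/I_{G\backslash v})=\reg(S_{G_1\backslash v}/I_{G_1\backslash v})$. Since $v\notin W$, the vertex $v$ lies in none of the branch cliques $K_{a_{i,j}}$, so deleting it only shrinks the core: $G_1\backslash v=F_k^{W}(K_{n-1})$, with the same branches and hence the same $p$.

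It then remains to compare. If $n\ge 3$, then $n-1\ge 2$ and Theorem~\ref{thm:fan-reg} applied to $F_k^W(K_{n-1})$ shows that $\reg(S_{G_1\backslash v}/I_{G_1\backslash v})$ equals $p$ exactly when $|W|\ge (n-1)-1=n-2$ and $h_{i,r_i}\ge 2$ for every $i\in[k]$, and equals $p+1$ otherwise; combined with $\reg(S_G/I_G)=p+1$, this gives $s=1$ in the first case and $s=0$ in the second, which is the assertion. The only case not covered is $n=2$, where $K_{n-1}=K_1$ is outside the range of Theorem~\ref{thm:fan-reg}; there $W\subsetneq[2]$ forces $k\le 1$, one evaluates $\reg(S_{G_1\backslash v}/I_{G_1\backslash v})$ directly ($0$ if $k=0$, and $1$ if $k=1$ by Lemma~\ref{complete}), and one checks each sub-case against $\reg(S_G/I_G)=p+1$. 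I expect the only delicate points to be pinning down the branch data of $G=F_{k+1}^{W\cup\{v\}}(K_n)$ so that Theorem~\ref{thm:fan-reg} forces $\reg(S_G/I_G)=p+1$ unconditionally, and cleanly handling the degenerate core $K_1$; the rest is a mechanical double application of Theorem~\ref{thm:fan-reg}.
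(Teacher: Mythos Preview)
Your proposal is correct and follows essentially the same route as the paper: identify $G$ as the $(k+1)$-fan $F_{k+1}^{W\cup\{v\}}(K_n)$ and $G\backslash v$ as $F_k^W(K_{n-1})$ together with an isolated vertex, then read off both regularities from Theorem~\ref{thm:fan-reg}. Your write-up is in fact more careful than the paper's---you make explicit why the new branch forces $\reg(S_G/I_G)=p+1$ unconditionally (since $h_{k+1,1}=1$), and you flag the degenerate case $n=2$ where the core $K_{n-1}$ falls outside Theorem~\ref{thm:fan-reg}; the paper's proof passes over both points silently.
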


\begin{proof} $G$ is actually a $(k+1)$-fan graph of $K_{n}$ on the set $W\cup \{v\}$ and $G\backslash v$ is the disjoint union of a graph $F_{k}^{W}(K_{n-1})$ and an isolated vertex. Suppose $p=|\{h_{i,j}: h_{i,j}\geq2\}|$. By  Lemma  \ref{chordal} and Theorem \ref{thm:fan-reg}, we getthat if $|W|\ge  n-2$ and  $h_{i,r_i} \ge 2$ for all $i\in [k]$, then  $\reg(S_G/I_G)=p+1$ and $\reg(S_{G\backslash v}/I_{G\backslash v})=p$. Otherwise,  $\reg(S_G/I_G)=\vartheta(G\backslash v)=p+1$,  as expected.
\end{proof}

Next, we are ready to prove another major result of this subsection.
\begin{Theorem}
	\label{thm:reg_Fan_Fan_circ}
 	Suppose that $G=(G_1,f_1)\circ (G_2,f_2)$,  where $G_i=F_{k_i}^{W_i}(K_{n_i})$ is a $k_i$-fan graph of $K_{n_i}$ on the set $W_i$ with $n_i\ge 2$ and $k_i\ge 1$ for $i\in[2]$.
 Let $v_i$ be the neighbor point of $f_i$ in $G_i$ and $t=|\{i\in [2]: \reg(S_{G_i\backslash v_i}/I_{G_i\backslash v_i})\ne \reg(S_{G_i}/I_{G_i})\}|$. We have
 \begin{itemize}
 	\item[(1)] If $t\leq 1$, then $\reg(S_G/I_G)=\reg(S_{G_1}/I_{G_1})+\reg(S_{G_2}/I_{G_2})-t$,
 	\item[(2)] If $t=2$ and $|W_i|=n_i-1$ for some $i\in [2]$, then
 	\[
 	\reg(S_G/I_G)=\reg(S_{G_1}/I_{G_1})+\reg(S_{G_2}/I_{G_2})-1,
 	\]
 	\item[(3)]  If $t=2$ and $|W_i|=n_i$ for all $i\in [2]$, then
 	\[
 	\reg(S_G/I_G)=\reg(S_{G_1}/I_{G_1})+\reg(S_{G_2}/I_{G_2})-2.
 	\]
 \end{itemize}	
\end{Theorem}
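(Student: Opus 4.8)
The plan is to prove Theorem~\ref{thm:reg_Fan_Fan_circ} by the same short-exact-sequence machinery used throughout the paper, splitting at the vertex $v=v_1=v_2$ into which $f_1,f_2$ are collapsed (more precisely, at $v_1$ or $v_2$ inside the corresponding fan), and then reducing everything to the explicit regularity formula of Theorem~\ref{thm:fan-reg} together with the chordality fact $\reg(S_G/I_G)=\vartheta(G)$ from Lemma~\ref{chordal}. The key preliminary observation is that $G=(G_1,f_1)\circ(G_2,f_2)$ is again a chordal graph (it is a clique sum of chordal graphs along a vertex, or one checks directly that no induced long cycle is created), so $\reg(S_G/I_G)=\vartheta(G)$ and the whole theorem becomes a purely combinatorial computation of the induced matching number of $G$ in terms of those of $G_1$ and $G_2$.

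First I would set up the notation matching Theorem~\ref{thm:reg_Fan_Fan_circ}: write each $G_i=F_{k_i}^{W_i}(K_{n_i})$ with partition $W_i=W_{i,1}\sqcup\cdots\sqcup W_{i,k_i}$, $v_i\in W_{i,1}$, and let $p_i=|\{h_{i,j}:h_{i,j}\ge 2\}|$ so that by Theorem~\ref{thm:fan-reg} we have $\reg(S_{G_i}/I_{G_i})\in\{p_i,p_i+1\}$. The condition ``$\reg(S_{G_i\backslash v_i}/I_{G_i\backslash v_i})\ne\reg(S_{G_i}/I_{G_i})$'' that defines $t$ should be translated, via Lemma~\ref{subgraph} and Lemma~\ref{clique sum} (applied with the path attached at $v_i$; note $G_i$ arises from $G_i\backslash f_i$ by adding a leaf at $v_i$), into a concrete condition on $G_i$ — essentially that $|W_i|\ge n_i-1$ and $h_{i,r_i}\ge 2$, i.e. the ``non-generic'' case in Theorem~\ref{thm:fan-reg}. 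I expect that removing $v_i$ changes the regularity precisely by $1$ in that boundary case and not at all otherwise, exactly paralleling how Lemma~\ref{clique sum} behaves; the three cases $t=0,1,2$ then correspond to how many of the $G_i$ sit in this boundary regime.

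Next comes the inductive/SES step. Pick the appropriate vertex $w$ to split on: in the $t=0$ or $t=1$ cases split at $v$ itself, using $J=(x_{N_G(v)})+I_{G\backslash N_G[v]}$ and $K=(x_v)+I_{G\backslash v}$, for which $J\cap K=I_G$, $\reg(S/J)=\reg(S/(J+K))$ by Lemma~\ref{decomposition}(\ref{decomposition-4}), and $G\backslash v=(G_1\backslash\{v,f_1\})\sqcup(G_2\backslash\{v,f_2\})$ is a disjoint union of two fan graphs so $\reg(S/K)=\reg(S_{G_1\backslash v}/I_{G_1\backslash v})+\reg(S_{G_2\backslash v}/I_{G_2\backslash v})$ by Lemma~\ref{sum}(\ref{sum-1}) and Lemma~\ref{subgraph}; meanwhile $G\backslash N_G[v]$ is a disjoint union of fan graphs on the two sides, giving $\reg(S/(J+K))$ additively as well. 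Then Lemma~\ref{exact}(\ref{exact-4}) plus Lemma~\ref{lem:induced-reg} to control the direction of the inequality yields the claimed formula; in the $t=1,2$ cases one may instead have to split at a suitable vertex $w_1$ inside the branch realizing $T_i$ and induct on $n_2$, exactly as in the proof of Theorem~\ref{thm:depth_Fan_Fan_circ}, using that $G\backslash w_1=G_1\circ(G_2\backslash w_1)$ remains of the same form with a smaller $n_2$. The main obstacle will be the $t=2$ case: here $\reg(S/K)$ and $\reg(S/(J+K))$ may coincide, so Lemma~\ref{exact}(\ref{exact-4}) only gives an upper bound, and one must separately produce an induced matching of $G$ of the required size — distinguishing $|W_i|=n_i-1$ (part (2), where one extra ``boundary'' edge survives in only one factor) from $|W_i|=n_i$ for both $i$ (part (3), where both boundary edges are absorbed into the identification at $v$ and cannot be used simultaneously in an induced matching, forcing the $-2$). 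Establishing that lower bound — by explicitly exhibiting, respectively forbidding, such matchings via the ``no $P_3$'' obstruction argument already used in the proof of Theorem~\ref{thm:fan-reg} — is the real content of the proof.
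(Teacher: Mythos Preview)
Your core insight is right and matches the paper: $G$ is chordal, so by Lemma~\ref{chordal} the statement reduces entirely to computing $\vartheta(G)$ in terms of $\vartheta(G_1)$ and $\vartheta(G_2)$, and the real work is the explicit construction/forbidding of induced matchings via the ``no $P_3$'' obstruction. That is exactly what the paper does.

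Where you diverge is the SES machinery. The paper does \emph{not} use the short exact sequence at $v$ (or anywhere else) in this proof; both the upper and the lower bounds on $\vartheta(G)$ are obtained by direct combinatorial arguments. For the lower bound the paper exhibits an induced matching $M=M_1\sqcup M_2$ (plus possibly one extra edge through $v$) built from matchings in $G_i\backslash v_i$; for the upper bound it argues by contradiction, using that any induced matching of $G$ splits as matchings in $G_1\backslash f_1$ and $G_2\backslash f_2$ and that at most one of them can use an edge through $v$. Your SES step would at best recover the upper bound, and as you yourself note it may give only an inequality when $\reg(S/K)$ and $\reg(S/(J+K))$ coincide, so you would still need the combinatorial argument anyway. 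In short, the SES detour buys nothing here and can be dropped.

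Two small corrections to your translation of the $t$-condition: Lemma~\ref{subgraph} concerns depth, not regularity, so it is not the right citation; and Lemma~\ref{clique sum} applies when $v\in V(K_n)\setminus W$, whereas here $v_i\in W_{i,1}\subseteq W_i$. The paper instead unpacks the condition $\vartheta(G_i\backslash v_i)\ne\vartheta(G_i)$ directly from Theorem~\ref{thm:fan-reg}, concluding in the $t=2$ case that $|W_{i,1}|=1$, $|W_i|\ge n_i-1$, and $h_{i_\ell,r_{i_\ell}}\ge 2$ for $\ell\ge 2$; this is what feeds into the (2)/(3) split. Your description of that split (one versus both ``boundary'' edges absorbed at $v$) is morally correct, and once you carry out the matching argument carefully you will recover the paper's proof.
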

\begin{proof} From the definition of $t$, we get  $t\in\{0,1,2\}$. Meanwhile, we  have $\reg(S_G/I_{G})=\vartheta(G)$ and $\reg(S_{G_i}/I_{G_i})=\vartheta(G_i)$ for $i\in [2]$ by Lemma \ref{chordal},  where $\vartheta(G)$ is the induced matching number of $G$.	
  We divide into the following three cases:

(1) If $t=0$,  then $\reg(S_{G_i\backslash v_i}/I_{G_i\backslash v_i})=\reg(S_{G_i}/I_{G_i})$ for $i\in[2]$, i.e., every $\vartheta(G_i\backslash v_i)=\vartheta(G_i)$. In this case, it is clear that $M=M_1 \sqcup M_2$ is an induced matching of $G$, where every $M_i=\{e_{i,1},\ldots,e_{i,\vartheta(G_i\backslash v_i)}\}$ is an induced matching  of $G_i\backslash v_i$. So $\vartheta(G)\ge \vartheta(G_1\backslash v_1)+\vartheta(G_2\backslash v_2)=\vartheta(G_1)+\vartheta(G_2)$.  If $\vartheta(G)\geq \vartheta(G_1)+\vartheta(G_2)+1$. Suppose $M'=\{e_1,\ldots,e_{\vartheta(G_1)+\vartheta(G_2)+1}\}$ is an induced matching of $G$, then there exist at least $(\vartheta(G_1)+1)$ elements in $M'$ belonging to $E(G_1\backslash f_1)$ or  $(\vartheta(G_2)+1)$ elements in $M'$ belonging to $E(G_2\backslash f_2)$. This forces $\vartheta(G_1)+1\le \vartheta(G_1\backslash f_1)\le \vartheta(G_1)$ or $\vartheta(G_2)+1\le \vartheta(G_2\backslash f_2)\le \vartheta(G_2)$, a contradiction.

(2) If $t=1$. By symmetry, we assume  that $\reg(S_{G_1\backslash v_1}/I_{G_1\backslash v_1})=\reg(S_{G_1}/I_{G_1})$ and $\reg(S_{G_2\backslash v_2}/I_{G_2\backslash v_2})\ne\reg(S_{G_2}/I_{G_2})$, i.e., $\vartheta(G_1\backslash v_1)=\vartheta(G_1)$ and $\vartheta(G_2\backslash v_2)\ne\vartheta(G_2)$. In this case, let $M_i=\{e_{i,1},\ldots,e_{i,\vartheta(G_i\backslash v_i)}\}$ be an induced matching  of $G_i\backslash v_i$ for $i\in [2]$,
then $M=M_1 \sqcup M_2$ is an induced matching of $G$. Thus $\vartheta(G)\ge \vartheta(G_1\backslash v_1)+\vartheta(G_2\backslash v_2)=\vartheta(G_1)+\vartheta(G_2)-1$, since $\vartheta(G_2\backslash v_2)=\vartheta(G_2)-1$. If $\vartheta(G)\geq \vartheta(G_1)+\vartheta(G_2)$ and $M''=\{e_1,\ldots,e_{\vartheta(G_1)+\vartheta(G_2)}\}$ is an induced matching of $G$, then,  by the assumption that $\vartheta(G_1\backslash v_1)=\vartheta(G_1)$ and $\vartheta(G_2\backslash v_2)=\vartheta(G_2)-1$, we obtain that  there exists at least $(\vartheta(G_1)+1)$ elements in $M''$ belong to $E(G_1\backslash f_1)$ or  $\vartheta(G_2)$ elements in $M''$ belong to $E(G_2\backslash f_2)$,  implying $\vartheta(G_1)+1\le \vartheta(G_1\backslash f_1)$ or $\vartheta(G_2)\le \vartheta(G_2\backslash f_2)$. We consider  two subcases:

(i) If $\vartheta(G_1)+1\le \vartheta(G_1\backslash f_1)$, then $\vartheta(G_1)+1\le \vartheta(G_1)$ since $\vartheta(G_1\backslash f_1)\le \vartheta(G_1)$, a contradiction.

(ii) If $\vartheta(G_2)\le \vartheta(G_2\backslash f_2)$, then $\vartheta(G_2\backslash f_2)=\vartheta(G_2)$, since  $\vartheta(G_2\backslash f_2)\le \vartheta(G_2)$. In this case, there exist $\vartheta(G_1)$ elements in  $M''$, say $e_{i_1},\ldots, e_{i_{\vartheta(G_1)}}$,  consisting  of   an induced matching  of $G_1\backslash f_1$, and the remaining elements  consist   of   an induced matching  of $G_2\backslash f_2$. Note that  $\vartheta(G_2)=\vartheta(G_2\backslash v_2)+1$, there exists some $e$  with $v_2$ as its an  endpoint belonging to $M''\backslash \{e_{i_1},\ldots, e_{i_{\vartheta(G_1)}}\}$. Let $e'=\{v,f_1\}\in E(G_1)$ where $v=v_1=v_2$, then  $\{e_{i_1},\ldots, e_{i_{\vartheta(G_1)}}, e'\}$ is
  an induced matching  of $G_1$, a  contradiction.

(3) If $t=2$, then $\reg(S_{G_i\backslash v_i}/I_{G_i\backslash v_i})\ne\reg(S_{G_i}/I_{G_i})$ for $i\in[2]$, i.e., every $\vartheta(G_i\backslash v_i)\ne\vartheta(G_i)$. Let  $W_i=W_{i_1}\sqcup\cdots\sqcup W_{i_{k_i}}$ be a partition of $W_i$  with $v_i\in W_{i_1}$. From the assumptions that for each $i$, $\reg(S_{G_i\backslash v_i}/I_{G_i\backslash v_i})\ne\reg(S_{G_i}/I_{G_i})$ and  $f_i$ is a leaf of $G_i$ with a unique neighbor point $v_i$,
we have  $|W_{i_1}|=1$ from the proof of Theorem \ref{thm:fan-reg}.
Let $W_{i_\ell}=\{w_{i_\ell,1},\ldots,w_{i_\ell,r_{i_\ell}}\}$ for $\ell\in[k_i]$ and $i\in [2]$, then
by Lemma \ref{clique sum} we get that  $|W_i|\ge  n_i-1$ and   $h_{i_\ell,r_{i_\ell}} \ge 2$ for all $\ell=2,\ldots,k_i$ and $i\in [2]$.
In this case, each
$G_i\backslash W_i$ is the disjoint union of $(\vartheta(G_i)-1)$ cliques $K'_{i,1}$, \ldots,$K'_{i,\vartheta(G_i)-1}$ and some isolated vertices. Let $M_i=\{e_{ij} | e_{ij}\in E(K'_{i,j}), j\in[\vartheta(G_i)-1]\}$ for $i\in [2]$, then  $M_i$ is an induced matching  of $G_i\backslash v_i$.  We consider two subcases:

(i) If  $|W_i|=n_i-1$ for some $i\in [2]$. Suppose that  $|W_1|=n_1-1$. In this case, let $M=M_1\sqcup M_2\sqcup\{e\}$, where edge  $e=\{v_1,u\}$ with $u\in V(K_{n_1})\backslash W_1$. Then $M$   is  an induced matching of $G$ by the choice of $M_1$ and $M_2$. So $\vartheta(G)\ge \vartheta(G_1)+\vartheta(G_2)-1$. If $\vartheta(G)\geq \vartheta(G_1)+\vartheta(G_2)$ and  $M'''=\{e_1,\ldots,e_{\vartheta(G_1)+\vartheta(G_2)}\}$ is an induced matching of $G$, then  there exist at least $\vartheta(G_1)$ elements in $M'''$ belonging to $E(G_1\backslash f_1)$ and $\vartheta(G_2)$ elements in $M'''$ belonging to $E(G_2\backslash f_2)$,  implying $\vartheta(G_1)\le \vartheta(G_1\backslash f_1)$ and $\vartheta(G_2)\le \vartheta(G_2\backslash f_2)$. So $\vartheta(G_i\backslash f_i)=\vartheta(G_i)$ for  $i\in [2]$.  Note that  each $\vartheta(G_i)=\vartheta(G_i\backslash v_i)+1$. Therefore, there exist  elements $e_{i_1}$ and $e_{i_2}$  in $M'''$ with $v_1$ and $v_2$ as one of their endpoints respectively. This   contradicts the fact that $v_1=v_2=v$ in $G$ and  $M'''$ is an induced matching of $G$.

(ii) If $|W_i|=n_i$ for all $i\in [2]$, then $M_1\sqcup M_2$   is  an induced matching of $G$. Thus $\vartheta(G)\ge \vartheta(G_1)+\vartheta(G_2)-2$. If $\vartheta(G)\ge \vartheta(G_1)+\vartheta(G_2)-1$ and $M^*=\{e_1,\ldots,e_{\vartheta(G_1)+\vartheta(G_2)-1}\}$ is an induced matching of $G$, then there exist at least $\vartheta(G_1)$ elements in $M^*$ belonging to $E(G_1\backslash f_1)$ or  $\vartheta(G_2)$ elements in $M^*$ belonging to $E(G_2\backslash f_2)$. This forces $\vartheta(G_1)=\vartheta(G_1\backslash f_1)$ or $\vartheta(G_2)=\vartheta(G_2\backslash f_2)$, since every $\vartheta(G_i\backslash f_i)\le \vartheta(G_i)$. Thus there exists an edge $e=\{u,v\}\in M^*$ with $u,v\in V(K_{n_i})$. Since  $|W_i|=n_i$ for all $i\in [2]$, we get that  $u,v$  and the two endpoints of another edge    form a path  with at least three  vertices, a contradiction.
\end{proof}

\subsection{Glue via the $*$ operation}
In this subsection, we study the depth and regularity of the quotient ring of the  edge ideal of a graph, which is obtained  by  $*$ operations. First, we show the first major result of this subsection.

\begin{Theorem}
\label{thm:depth_Fan_Fan_*}
	Suppose that $G=(G_1,f_1)*(G_2,f_2)$,  where $G_i=F_{k_i}^{W_i}(K_{n_i})$ is a $k_i$-fan graph of $K_{n_i}$ on the set $W_i$ with $n_i\ge 2$ and $k_i\ge 1$ for $i\in[2]$.
	 Let $v_i$ be the neighbor of $f_i$ in $G_i$ and $W_i=W_{i,1}\sqcup\cdots\sqcup W_{i,k_i}$ be a partition of $W_i$ with $v_i\in W_{i,1}$ for $i\in[2]$. Let  $T_i=\max\{|W_{i,1}|, |W_{i,2}|,\ldots,|W_{i,k_i}|\}$ and  $T'_i=\max\{|W_{i,1}|-1, |W_{i,2}|,\ldots,|W_{i,k_i}|\}$ for $i\in[2]$, and $t=|\{i:T'_i\ne T_i-1\}|$. Then
	\[
	\depth(S_G/I_{G})=\depth(S_{G_1}/I_{G_1})+\depth(S_{G_2}/I_{G_2})-s
	\]
	where $s=0$ if $t=0$, otherwise, $s=1$.
\end{Theorem}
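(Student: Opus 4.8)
The plan is to feed the short exact sequence~(\ref{eqn:SES-1}) coming from Lemma~\ref{decomposition} at a carefully chosen vertex into Lemma~\ref{exact}(\ref{exact-2}), with an outer induction on $n_1+n_2$. Write $D_i=\depth(S_{G_i}/I_{G_i})=1+|W_i|-T_i$ (by Theorem~\ref{depth of F_k}) and $\Sigma=D_1+D_2$; recall $T'_i\in\{T_i-1,T_i\}$, with $T'_i=T_i-1$ precisely when $|W_{i,1}|$ is the strict maximum of the partition of $W_i$ (equivalently $|W_{i,1}|=T_i$), and that $t$ counts the indices with $T'_i=T_i$. The first thing I would establish is the refinement of Lemma~\ref{subgraph}
\[
\depth(S_{G_i\backslash f_i}/I_{G_i\backslash f_i})=|W_i|-T'_i,
\]
which equals $D_i$ when $T'_i=T_i-1$ (this is Lemma~\ref{subgraph}) and $D_i-1$ when $T'_i=T_i$. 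For the second case one applies Lemma~\ref{decomposition} to $G_i\backslash f_i$ at $v_i$: here $N_{G_i\backslash f_i}[v_i]=V(K_{n_i})\cup\{\text{fresh vertices of the branch on }W_{i,1}\}$, so $(G_i\backslash f_i)\backslash N_{G_i\backslash f_i}[v_i]$ is a disjoint union of $|W_i|-|W_{i,1}|$ complete graphs and isolated vertices, while $(G_i\backslash f_i)\backslash v_i=G_i\backslash\{v_i,f_i\}$ is again a fan graph of $K_{n_i-1}$ on $W_i\backslash v_i$ whose partition has maximum $T'_i$; Lemma~\ref{complete}, Theorem~\ref{depth of F_k} and Lemma~\ref{exact}(\ref{exact-2}) then give the value, the equality clause being available except when $|W_{i,1}|=T_i=T'_i$, a degenerate configuration I would clear by the same peeling argument used for $t=2$ below.

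Next I would apply Lemma~\ref{decomposition} to $G$ at $v_1$, with $J=(x_{N_G(v_1)})+I_{G\backslash N_G[v_1]}$ and $K=(x_{v_1})+I_{G\backslash v_1}$. Because $f$ is a leaf of $G_1$ with neighbour $v_1$, one gets $N_G(v_1)=N_{G_1}(v_1)$, $G\backslash N_G[v_1]=(G_1\backslash N_{G_1}[v_1])\sqcup(G_2\backslash f_2)$ and $G\backslash v_1=(G_1\backslash\{v_1,f_1\})\sqcup G_2$. Since $G_1\backslash N_{G_1}[v_1]$ is a disjoint union of $|W_1|-|W_{1,1}|$ complete graphs, $G_1\backslash\{v_1,f_1\}$ is a fan graph of $K_{n_1-1}$ on $W_1\backslash v_1$, and $G_2\backslash f_2$ is controlled by the auxiliary fact, one computes $\depth(S_G/(J+K))=(|W_1|-|W_{1,1}|)+(|W_2|-T'_2)$, $\depth(S_G/J)=\depth(S_G/(J+K))+1$, and $\depth(S_G/K)=(|W_1|-T'_1)+D_2$. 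Comparing these three numbers (using $T'_1\ge|W_{1,1}|-1$ and $T_2-T'_2\in\{0,1\}$), the equality clause of Lemma~\ref{exact}(\ref{exact-2}) applies to~(\ref{eqn:SES-1}) in all cases but one and yields $\depth(S_G/I_G)=\Sigma-s$ with $s$ as claimed: this settles $t=0$ (giving $\Sigma$); $t=1$, after choosing the vertex $v_i$ with $T'_i=T_i-1$ (giving $\Sigma-1$); and $t=2$ whenever $|W_{i,1}|\ne T_i-1$ for some $i$, because if $|W_{i,1}|=T_i$ the computation gives $\Sigma-1$ outright, and if $|W_{i,1}|\le T_i-2$ then $\depth(S_G/(J+K))>\depth(S_G/K)=\Sigma-1$, again forcing $\depth(S_G/I_G)=\Sigma-1$.

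The only remaining case, $t=2$ with $|W_{1,1}|=T_1-1$ and $|W_{2,1}|=T_2-1$, is the real obstacle, because then all three depths above line up so that the equality clause is unavailable and~(\ref{eqn:SES-1}) at $v_1$ (or $v_2$) only gives $\depth(S_G/I_G)\ge\Sigma-1$. To finish I would copy the strategy of cases (II)–(III) of the proof of Theorem~\ref{thm:depth_Fan_Fan_circ}: as $|W_{1,1}|=T_1-1$ and $T'_1=T_1$, there is a branch $W_{1,j_1}$ with $j_1\ge2$ and $|W_{1,j_1}|=T_1$; let $w$ be its first vertex and apply Lemma~\ref{decomposition} to $G$ at $w$. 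Here $w\in V(K_{n_1})$, hence $v_1\in N_{G_1}[w]$ and the leaf $f$ is deleted together with $N_G[w]$, so $G\backslash N_G[w]$ is the disjoint union of the fan graph $G_2\backslash f_2$ with a family of complete graphs, which gives $\depth(S_G/(J+K))=\Sigma-2$ and $\depth(S_G/J)=\Sigma-1$; and $G\backslash w=(G_1\backslash w)*_f G_2$, where $G_1\backslash w=F_{k_1}^{W_1\backslash w}(K_{n_1-1})$ still has $f$ as a leaf, so that $\depth(S_G/K)=1+\depth(S_{G\backslash w}/I_{G\backslash w})$ is read off either from the induction hypothesis (when $(G_1\backslash w)*_f G_2$ is again in this configuration, now with $n_1$ smaller) or from the cases of the previous paragraph (when deleting $w$ turns $|W_{1,1}|$ into a maximum of the new partition). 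In both sub-cases $\depth(S_G/J\oplus S_G/K)\ne\depth(S_G/(J+K))$, so Lemma~\ref{exact}(\ref{exact-2}) delivers $\depth(S_G/I_G)=\Sigma-1$, closing the induction. The delicate part of the whole argument is exactly this last step: verifying that deleting the apex of a maximal branch always produces either a strictly smaller instance of the same configuration or one of the already-settled cases, and tracking precisely how $T_1$, $T'_1$ and the component count of $G\backslash N_G[w]$ change.
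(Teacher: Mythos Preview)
Your route differs from the paper's. For $t\ge 1$ the paper always decomposes at the first vertex $w_1$ of a maximal branch $W_{2,2}\subset W_2$ and runs a single induction on $n_2$; the point of that choice is that $f\notin N_G[w_1]$, so $G\setminus N_G[w_1]$ contains $G_1$ \emph{intact} and one never needs the depth of $G_i\setminus f_i$. You instead decompose at $v_i$, which dispatches most sub-cases without induction (and avoids the paper's appeal to Theorem~\ref{thm:depth_Fan_Fan_circ} in the base $n_2=2$), at the price of needing your auxiliary formula $\depth(S_{G_i\setminus f_i}/I_{G_i\setminus f_i})=|W_i|-T'_i$.

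Two things need fixing. First, the auxiliary formula is genuinely required with its \emph{exact} value---in your $t=1$ computation, whether $\depth(S_G/(J+K))$ equals $\Sigma-2$ or $\Sigma-1$ decides the answer---and your sketch leaves the degenerate configuration $|W_{i,1}|=T_i=T'_i$ unproven. You would have to run a separate induction (on the number of parts $W_{i,j}$, $j\ge 2$, of size $T_i$), peeling one such branch at its first vertex until only $W_{i,1}$ has size $T_i$ and Lemma~\ref{subgraph} applies. Second, in your hard $t=2$ step the sentence ``the leaf $f$ is deleted together with $N_G[w]$'' is false: $f$ is adjacent only to $v_1$ and $v_2$, neither equal to $w$, so $f\notin N_G[w]$. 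What really happens is that $v_1\in N_G[w]$, so in $G\setminus N_G[w]$ the vertex $f$ becomes isolated on the $G_1$ side and is absorbed into $G_2$; hence $G\setminus N_G[w]=(G_1\setminus(N_{G_1}[w]\cup\{f\}))\sqcup G_2$, a union of $G_2$ (not $G_2\setminus f_2$) with $|W_1|-T_1-1$ cliques. Your value $\Sigma-2$ survives only because $\depth(S_{G_2}/I_{G_2})-\depth(S_{G_2\setminus f_2}/I_{G_2\setminus f_2})=1$ here, cancelling the miscount. Likewise $G\setminus w$ is not $(G_1\setminus w)*_f G_2$ but $\bigl[(G_1')*_f G_2\bigr]\sqcup K_{a_{1,j_1,1}-1}$ with $G_1'=F_{k_1}^{W_1\setminus w}(K_{n_1-1})$; the ``$1+$'' in your $\depth(S_G/K)$ is the depth of that extra clique. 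Once these descriptions are corrected your induction does close as you say.
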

\begin{proof} First, we have $t \in \{0,1,2\}$ by the definition of $t$. We distinguish between the following two cases:
	
(1) If $t=0$, then $T'_i=T_i-1$ for $i\in[2]$, which forces each $T_i=|W_{i,1}|$ and $|W_{i,1}|>|W_{i,j}|$ for  $j=2,\ldots,k_i$. In this case, we set $J=(x_{N_G(v_2)})+I_{G\backslash N_G[v_2]}$ and $K=(x_{v_2})+I_{G\backslash v_2}$,  then $J+K=(x_{N_G[v_2]})+I_{G \backslash N_G[v_2]}$, $G\backslash v_2=G_1\sqcup (G_2\backslash \{v_2,f_2\})$ and  $G\backslash N_G[v_2]$ has $(|W_2|-|W_{2,1}|+1)$ connected components  consisting of $G_1\backslash f_1$ and $(|W_2|-|W_{2,1}|)$ cliques or isolated vertices. So we have
\begin{align*}
	\depth(S_G/K)=&\depth(S_{G_1}/I_{G_1})+\depth(S_{G_2\backslash \{v_2,f_2\}}/I_{G_2\backslash \{v_2,f_2\}})\\
	=&\depth(S_{G_1}/I_{G_1})+(1+(|W_2|-1)-T'_2)
	\end{align*}
\begin{align*}
=&\depth(S_{G_1}/I_{G_1})+(1+|W_2|-T_2)\\
	=&\depth(S_{G_1 }/I_{G_1})+\depth(S_{G_2 }/I_{G_2}).
\end{align*}
and
\begin{align*}
	\depth(S_G/(J+K))=&\depth(S_{G_1\backslash f_1}/I_{G_1\backslash f_1})+(|W_2|-|W_{2,1}|)\\
	=&\depth(S_{G_1}/I_{G_1})+\depth(S_{G_2}/I_{G_2})-1.
\end{align*}	
where the second equality holds by Lemma \ref{subgraph} and Theorem \ref{depth of F_k}.
Applying Lemma \ref{exact}(\ref{exact-2}) and Lemma \ref{decomposition} to the  exact sequence (\ref{eqn:SES-1}),  we get the desired result.

(2) If  $t\geq 1$, we  assume that $T'_2\neq T_2-1$.   This  implies that
$T_2=|W_{2,j}|\ge |W_{2,1}|$ for some $j\ne 1$. Suppose $T_2=|W_{2,2}|$. In this case, we prove  by induction on $n_2$ that
\[
\depth(S_G/I_{G})=\depth(S_{G_1}/I_{G_1})+\depth(S_{G_2}/I_{G_2})-1.
\]
If $n_2=2$, then  $G_2=P_3\cup_u  K_b$ is the  clique sum of $P_3$ and some $K_b$, we get $\depth(S_{G_2} / I_{G_2})=2$ by Theorem \ref{depth of F_k}. Let $J=(x_{N_G(u)})+I_{G\backslash N_G[u]}$ and $K=(x_{u})+I_{G\backslash u}$,  then $J+K=(x_{N_G[u]})+I_{G \backslash N_G[u]}$, $G\backslash N_G[u]=G_1$ and $G\backslash u= K_{b-1} \sqcup (G_1 \circ P_4)$. So $\depth(S_G/(J+K))=\depth(S_{G_1} / I_{G_1})$ and
\begin{align*}
	\depth(S_G/K)&=\depth(S_{G\backslash u}/I_{G\backslash u})\\
	&=1+\depth(S_{G_1 \circ P_4}/I_{G_1 \circ P_4})\\
	&\geq 1+(\depth(S_{P_4}/I_{P_4})+\depth(S_{G_1}/I_{G_1})-2)\\
	&=\depth(S_{P_4}/I_{P_4})+\depth(S_{G_1}/I_{G_1})-1\\
	&=2+\depth(S_{G_1}/I_{G_1})-1\\
	&=\depth(S_{G_1 }/I_{G_1})+1
\end{align*}
where the  inequality holds by Theorem \ref{thm:depth_Fan_Fan_circ}. Applying Lemma \ref{exact}(\ref{exact-2}) and Lemma \ref{decomposition}  to the  exact sequence (\ref{eqn:SES-1}),  we get
\[
\depth(S_G/I_{G})=\depth(S_{G_1 }/I_{G_1})+1=\depth(S_1/I_{G_1})+\depth(S_2/I_{G_2})-1.
\]

Now we assume that $n_2 \geq 3$.   Let $W_{2,2}=\{w_1,\ldots,w_{r_2}\}$ and $\{K_{a_{2,1}},\ldots,K_{a_{2,r_2}}\}$ be a branch of the fan on $W_{2,2}$ with $V(K_{n_2})\cap V(K_{a_{2,i}})=\{w_1,\ldots,w_i\}$.  Let $J=(x_{N_G(w_1)})+I_{G\backslash N_G[w_1]}$ and $K=(x_{w_1})+I_{G\backslash w_1}$,  then $J+K=(x_{N_G[w_1]})+I_{G \backslash N_G[w_1]}$, $G\backslash N_G[w_1]$ has $(|W_2|-|W_{2,2}|)$  connected components, which  consisting of $G_1$ and $(|W_2|-|W_{2,2}|-1)$ connected components consisting of some cliques and isolated vertices. Then we have 	
\begin{align*}
	\depth(S_G/(J+K))=&\depth(S_{G_1}/I_{G_1})+(|W_2|-|W_{2,2}|-1)\\
	=&\depth(S_{G_1}/I_{G_1})+\depth\,(S_{G_2}/I_{G_2})-2.
\label{eqn:SES-6}
\end{align*}
where the last equality holds	by Theorem \ref{depth of F_k}.   At the same time,   $G\backslash w_1=(G_1 *(G_2\backslash w_1))\sqcup K_{b-1}$, then
\[
\depth(S_G/K)=\depth(S_{G\backslash w_1}/I_{G\backslash w_1})
=1+\depth(S_{G_1 *(G_2\backslash w_1)}/I_{G_1 *(G_2\backslash w_1)}).
\]
To  compute the $\depth(S_{G_1 *(G_2\backslash w_1)}/I_{G_1 *(G_2\backslash w_1)})$,
let $T^*_1=T_1$, $T'^*_1=T'_1$, $T^{*}_2=\max\{|W_{2,1}|,\\ |W_{2,2}|-1,|W_{2,3}|,\ldots,|W_{2,k_2}|\}$,  $T'^*_2=\max\{|W_{2,1}|-1, |W_{2,2}|-1, |W_{2,3}|,
\ldots,|W_{2,k_2}|\}$ and $t'=|\{i:T'^*_i\ne T^*_i-1\}|$.
We distinguish into the following two subcases:

(i) If  $t=1$, then $T'_1 = T_1-1$.  In this case,  for the graph $G_1 *(G_2\backslash w_1)$, we have $t'\leq 1$. Thus, by the above case $(1)$ and by induction, we have
\begin{align*}
 \depth(S_{G\backslash w_1}/I_{G\backslash w_1})&=\begin{cases}
            \depth(S_{G_1}/I_{G_1})+\depth(S_{G_2\backslash w_1}/I_{G_2\backslash w_1}), & \text{if $t'=0$,}\\
            \depth(S_{G_1}/I_{G_1})+\depth(S_{G_2\backslash w_1}/I_{G_2\backslash w_1})-1, & \text{if $t'=1$,}
        \end{cases}\\
 &\ge \depth(S_{G_1}/I_{G_1})+\depth(S_{G_2\backslash w_1}/I_{G_2\backslash w_1})-1.
\end{align*}
 (ii) If $t=2$, then  $T'_1 \neq T_1-1$.  In this case,  for the graph $G_1 *(G_2\backslash w_1)$, we have $t'\geq 1$.  By induction, we have
\[
\depth(S_{G_1 *(G_2\backslash w_1)}/I_{G_1 *(G_2\backslash w_1)})=\depth(S_{G_1}/I_{G_1 })+\depth(S_{G_2\setminus w_1 }/I_{G_2\setminus w_1})-1.
\]
In  both cases we obtain
 \begin{align*}
	\depth(S_G/K)&=1+\depth(S_{G_1 *(G_2\backslash w_1)}/I_{G_1 *(G_2\backslash w_1)})\\
	&\ge 1+(\depth(S_{G_1}/I_{G_1})+\depth(S_{G_2\backslash w_1}/I_{G_2\backslash w_1})-1)\\
	&=\depth(S_{G_1}/I_{G_1})+(1+(|W_2|-1)-T^*_2)\\
	&\ge\depth(S_{G_1}/I_{G_1 })+(|W_2|-T_2)\\
	&=\depth(S_{G_1 }/I_{G_1})+\depth(S_{G_2 }/I_{G_2})-1
\end{align*}
where the second  inequality holds	because of $T^*_2\le T_2$.
Applying Lemma \ref{exact}(\ref{exact-2}) and Lemma \ref{decomposition} to the  exact sequence (\ref{eqn:SES-1}),  we obtain the desired results.
\end{proof}

Next, we are ready to prove another major result of this subsection.
\begin{Theorem}
	\label{thm:reg_Fan_Fan_*}
 Suppose that $G=(G_1,f_1)*(G_2,f_2)$,  where $G_i=F_{k_i}^{W_i}(K_{n_i})$ is a $k_i$-fan graph of $K_{n_i}$ on the set $W_i$ with $n_i\ge 2$ and $k_i\ge 1$ for $i\in[2]$.
	 Let $v_i$ be the neighbor of $f_i$ in $G_i$ and $t=|\{i\in [2]: \reg(S_{G_i\backslash v_i}/I_{G_i\backslash v_i})\ne \reg(S_{G_i}/I_{G_i})\}|$. We have
 \begin{itemize}
 	\item[(1)] If $t\leq 1$, then $\reg(S_G/I_G)=\reg(S_{G_1}/I_{G_1})+\reg(S_{G_2}/I_{G_2})$;
 \item[(2)] If $t=2$ and $|W_i|=n_i-1$ for all $i\in [2]$, then
 \[
 \reg(S_G/I_G)=\reg(S_{G_1}/I_{G_1})+\reg(S_{G_2}/I_{G_2}),
 \]
 \item[(3)]  If $t=2$ and $|W_i|=n_i$ for  some $i\in [2]$, then
 \[
 \reg(S_G/I_G)=\reg(S_{G_1}/I_{G_1})+\reg(S_{G_2}/I_{G_2})-1.
 \]
\end{itemize}		
\end{Theorem}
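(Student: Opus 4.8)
The plan is to mirror the structure of Theorem \ref{thm:reg_Fan_Fan_circ}: since $G$, $G_1$, $G_2$ are all chordal, Lemma \ref{chordal} reduces everything to a computation of induced matching numbers, so I would aim to show $\vartheta(G)=\vartheta(G_1)+\vartheta(G_2)$ in cases (1) and (2) and $\vartheta(G)=\vartheta(G_1)+\vartheta(G_2)-1$ in case (3). The one new phenomenon compared with the $\circ$ operation is that in the $*$ operation the two graphs share only the leaf $f$ (not the neighbor $v$), so the leaf $f$ can be an endpoint of an edge coming from either $G_1$ or $G_2$ but not both at once. First I would record the reformulation $t=|\{i:\vartheta(G_i\backslash v_i)\ne\vartheta(G_i)\}|$ via Lemma \ref{chordal}, and note $t\in\{0,1,2\}$; I would also recall from the proof of Theorem \ref{thm:fan-reg} that $\vartheta(G_i\backslash v_i)\ne\vartheta(G_i)$ forces $|W_{i,1}|=1$ (where $v_i\in W_{i,1}$), and that then by Lemma \ref{clique sum}-type reasoning $|W_i|\ge n_i-1$ and each terminal $h$-value is $\ge 2$.

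In case $t\le 1$ (part (1)) the lower bound $\vartheta(G)\ge\vartheta(G_1)+\vartheta(G_2)$ comes from gluing induced matchings: if $t=0$, an induced matching $M_i$ of $G_i\backslash v_i$ of size $\vartheta(G_i)$ lives entirely inside $G_i$ avoiding $v_i$; but to make $M_1\sqcup M_2$ induced in $G$ I must also avoid the shared vertex $f$, so I would instead take $M_i$ to be an induced matching of $G_i\backslash f_i$ if it still has size $\vartheta(G_i)$, or else argue that one can always choose $M_i$ inside $G_i$ missing both $v_i$ and $f_i$ (using $\vartheta(G_i\backslash v_i)=\vartheta(G_i)$ and that $f_i$ is a leaf, so any matching using $f_i$ can be rerouted). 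For the upper bound I would argue as in Theorem \ref{thm:reg_Fan_Fan_circ}(1)–(2): any induced matching $M'$ of $G$ of size $\vartheta(G_1)+\vartheta(G_2)+1$ splits into edges of $E(G_1)$ and edges of $E(G_2)$ (the only shared vertex is $f$, a leaf, so at most one edge of $M'$ touches $f$), forcing one of $G_i$ to contain an induced matching of size $>\vartheta(G_i)$ — contradiction. The $t=1$ subcase is handled exactly as in the $\circ$ case, tracking where the extra edge at $v_2$ goes and using $\{v_1,f_1\}$ or the leaf structure to derive a contradiction; here no $-1$ appears (unlike the $\circ$ case) precisely because $v_1$ and $v_2$ are now distinct vertices, so the edge at $v_2$ does not conflict with an edge at $v_1$.

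For $t=2$ (parts (2) and (3)), I have $\vartheta(G_i\backslash v_i)=\vartheta(G_i)-1$ for both $i$, hence $|W_{i,1}|=1$ and $|W_i|\ge n_i-1$, and $G_i\backslash W_i$ is a disjoint union of $\vartheta(G_i)-1$ cliques plus isolated vertices; pick $M_i$ one edge from each clique, an induced matching of $G_i\backslash v_i$. If $|W_i|=n_i-1$ for \emph{both} $i$ (part (2)), then each $K_{n_i}$ still has a vertex $u_i\notin W_i$, and since $v_i\notin N_G[f]$ I can add both edges $e_1=\{v_1,u_1\}$ and $e_2=\{v_2,u_2\}$ to $M_1\sqcup M_2$ and check it remains induced in $G$ — crucially these two edges involve $v_1\ne v_2$, so unlike the $\circ$ situation there is no clash, giving $\vartheta(G)\ge\vartheta(G_1)+\vartheta(G_2)$; the matching upper bound $\le\vartheta(G_1)+\vartheta(G_2)$ follows from the $f$-is-a-leaf splitting argument as above. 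If instead $|W_i|=n_i$ for some $i$ (part (3)), that $K_{n_i}$ has no free vertex, so at most one of the two extra edges at $v_1,v_2$ can be realized, yielding only $\vartheta(G)\ge\vartheta(G_1)+\vartheta(G_2)-1$, and the matching upper bound argument (any induced matching of size $\vartheta(G_1)+\vartheta(G_2)$ would force an induced matching of size $\vartheta(G_i)$ inside some $G_i\backslash f_i$ and then, using $\vartheta(G_i)=\vartheta(G_i\backslash v_i)+1$ together with $|W_i|=n_i$, produce a path on three vertices) closes it. The main obstacle I anticipate is the bookkeeping in the lower-bound constructions: verifying that the glued matching is genuinely \emph{induced} in $G$ requires checking there is no edge of $G$ between the chosen edges across the glued vertex $f$, and distinguishing carefully when an edge of some $M_i$ happens to use $f_i$ — this is where the $*$ operation differs most sharply from the $\circ$ operation and where a clean case analysis is essential.
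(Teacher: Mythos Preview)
Your proposal is correct and follows essentially the same route as the paper: reduce to induced matching numbers via Lemma~\ref{chordal}, build explicit matchings for the lower bounds, and use a pigeonhole-style splitting across $E(G_1)$ and $E(G_2)$ for the upper bounds.

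One simplification you are missing: your worry about the shared vertex $f$ in case $t\le 1$ is unnecessary. The paper takes $M_1$ to be a maximum induced matching of $G_1\backslash v_1$ and $M_2$ a maximum induced matching of the \emph{full} $G_2$. Since $f_1$'s only neighbor in $G_1$ is $v_1$, the vertex $f_1$ is isolated in $G_1\backslash v_1$, so $M_1$ automatically avoids $f$; hence $M_1\sqcup M_2$ is induced in $G$ with no extra bookkeeping. For case~(3) the paper's explicit extra edge is $\{v_1,f\}$ (again $M_1,M_2$ sit in $G_i\backslash v_i$ and so miss $f$), and the upper-bound contradiction is exactly the path $v_1$--$f$--$v_2$ you describe.
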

\begin{proof}
First we have $t\in\{0,1,2\}$  and $\reg(S_G/I_{G})=\vartheta(G)$ and $\reg(S_{G_i}/I_{G_i})=\vartheta(G_i)$ for $i\in [2]$ by the definition of $t$ and Lemma \ref{chordal}.	
We  distinguish between the following two cases:
				
(1) If  $t\leq 1$, we assume that $\reg(S_{G_1\backslash v_1}/I_{G_1\backslash v_1})=\reg(S_{G_1}/I_{G_1})$, i.e., $\vartheta(G_1\backslash v_1)=\vartheta(G_1)$. In this case, $M=M_1 \sqcup M_2$ is an induced matching of $G$, where  $M_1=\{e_{1,1},\ldots,e_{1,\vartheta(G_1\backslash v_1)}\}$  and
$M_2=\{e_{2,1},\ldots,e_{2,\vartheta(G_2)}\}$ are induced matchings  of $G_1\backslash v_1$ and  $G_2$, respectively. Thus $\vartheta(G)\ge \vartheta(G_1\backslash v_1)+\vartheta(G_2)=\vartheta(G_1)+\vartheta(G_2)$.  If $\vartheta(G)\geq \vartheta(G_1)+\vartheta(G_2)+1$, then there exists an induced matching $M'$ of $G$,  which has at least  $\vartheta(G_1)+\vartheta(G_2)+1$ elements. So there exist at least $(\vartheta(G_1)+1)$ elements in $M'$ belonging to $E(G_1)$ or  $(\vartheta(G_2)+1)$ elements in $M'$ belonging to $E(G_2)$. This forces $\vartheta(G_1)+1\le \vartheta(G_1)$ or $\vartheta(G_2)+1\le \vartheta(G_2)$, a contradiction.

(2) If $t=2$, then each $\reg(S_{G_i\backslash v_i}/I_{G_i\backslash v_i})\ne \reg(S_{G_i}/I_{G_i})$, i.e., each $\vartheta(G_i\backslash v_i)<\vartheta(G_i)$. Let  $W_i=W_{i_1}\sqcup\cdots\sqcup W_{i_{k_i}}$ be a partition of $W_i$ with $v_i\in W_{i_1}$. From the assumptions that for each $i$, $\reg(S_{G_i\backslash v_i}/I_{G_i\backslash v_i})\ne\reg(S_{G_i}/I_{G_i})$ and  $f_i$ is a leaf of $G_i$ with a unique neighbor point $v_i$,
we have  $|W_{i_1}|=1$ from the proof of Theorem \ref{thm:fan-reg}.
Let $W_{i_\ell}=\{w_{i_\ell,1},\ldots,w_{i_\ell,r_{i_\ell}}\}$ for $\ell\in[k_i]$ and $i\in [2]$, then by Lemma \ref{clique sum} we get that  $|W_i|\ge  n_i-1$ and   $h_{i_\ell,r_{i_\ell}} \ge 2$ for all $\ell=2,\ldots,k_i$ and $i\in [2]$.
In this case, each $G_i\backslash W_i$ is the disjoint union of $(\vartheta(G_i)-1)$ cliques $K'_{i,1}$, \ldots,$K'_{i,\vartheta(G_i)-1}$ and some isolated vertices. Let $M_i=\{e_{ij} | e_{ij}\in E(K'_{i,j}), j\in[\vartheta(G_i)-1]\}$ for $i\in [2]$, then  $M_i$ is an induced matching  of $G_i\backslash v_i$.  We consider two subcases:
				
(i) If  $|W_i|=n_i-1$ for all $i\in [2]$.  In this case, let $M=M_1\sqcup M_2\sqcup\{e_1,e_2\}$, where the edge  $e_i=\{v_i,u_i\}$ with $u_i\in V(K_{n_i})\backslash W_i$. Then $M$   is  an induced matching of $G$ by the choice of $M_1$ and $M_2$. Hence $\vartheta(G)\ge \vartheta(G_1\backslash v_1)+\vartheta(G_2\backslash v_2)+2= \vartheta(G_1)+\vartheta(G_2)$. If $\vartheta(G)\geq \vartheta(G_1)+\vartheta(G_2)+1$,  then there exists an induced matching $M''$ of $G$,  which has at least  $\vartheta(G_1)+\vartheta(G_2)+1$ elements. So there are at least $(\vartheta(G_1)+1)$ elements in $M''$ belonging to $E(G_1)$ or  $(\vartheta(G_2)+1)$ elements in $M''$ belonging to $E(G_2)$. This forces $\vartheta(G_1)+1\le \vartheta(G_1)$ or $\vartheta(G_2)+1\le \vartheta(G_2)$, a contradiction.

(ii) If $|W_i|=n_i$ for some $i\in [2]$, then we  assume that $|W_1|=n_1$. In this case, let $M=M_1 \sqcup M_2 \sqcup \{e\}$, where the edge $e=\{v_1,f\}$ with $f=f_1=f_2$ in $G$. Then $M$   is  an induced matching of $G$ by the choice of $M_1$ and $M_2$. So $\vartheta(G)\ge \vartheta(G_1\backslash v_1)+\vartheta(G_2\backslash v_2)+1=\vartheta(G_1)+\vartheta(G_2)-1$. If $\vartheta(G)\ge \vartheta(G_1)+\vartheta(G_2)$, then there exists an induced matching $M'''$ of $G$,  which has at least  $\vartheta(G_1)+\vartheta(G_2)$ elements. So there are at least $\vartheta(G_1)$ elements in $M'''$ belonging to $E(G_1)$ or  $\vartheta(G_2)$ elements in $M'''$ belonging to $E(G_2)$.
Since each $\vartheta(G_i)=\vartheta(G_i\backslash v_i)+1$ and $|W_1|=n_1$,
there are  elements $e_{i_1}$ and $e_{i_2}$  in $M'''$ with $e_{i_1}=\{v_1,f\}$ and $e_{i_2}=\{v_2,f\}$. Thus $v_1$, $v_2$ and $f$ form a path in $M'''$, which contradicts that $M'''$ is an induced matching of $G$.				
\end{proof}

\medskip
\hspace{-6mm} {\bf Acknowledgments}

 \vspace{3mm}
\hspace{-6mm}  This research is supported by the Natural Science Foundation of Jiangsu Province (No. BK20221353). The authors are grateful to the computer algebra systems CoCoA \cite{Co} for providing us with a large number of examples.

\medskip
\hspace{-6mm} {\bf Data availability statement}

\vspace{3mm}
\hspace{-6mm}  The data used to support the findings of this study are included within the article.

\medskip

\end{document}